\newcommand{\email}[1]{\hspace*{\stretch{1}}\emph{\texttt{#1}}}
\def\blfootnote{\xdef\@thefnmark{$\star$}\@footnotetext}
\newenvironment{Authors}%
  {\begin{center}\begin{bfseries}}%
  {\end{bfseries}\end{center}}
\newenvironment{Addresses}%
  {\begin{flushleft}\begin{itshape}}%
  {\end{itshape}\end{flushleft}}
\newtheorem{theorem}{Theorem}[section]
\newtheorem{theorem1}{Theorem}[section]
\newtheorem{theorem2}{Theorem}[section]
\newtheorem{Proposition}[theorem]{Proposition}
\newtheorem{Lemma}[theorem1]{Lemma}
\newtheorem{remark}[theorem2]{Remark}
\begin{document}
\thispagestyle{plain}

\title{An offline/online  procedure for dual norm calculations of parameterized functionals: empirical quadrature and empirical test spaces}

\date{}

\maketitle

\vspace{-45pt}

\begin{Authors}
Tommaso Taddei$^{1}$
\end{Authors}

\begin{Addresses}
$^1$   Institut de Math{\'e}matiques de Bordeaux,
Team MEMPHIS, INRIA Bordeaux - Sud Ouest       \email{tommaso.taddei@inria.fr}\\ 
\end{Addresses}

\begin{abstract}
We present an offline/online computational procedure for computing the dual norm of parameterized
 linear functionals.
{ The approach is motivated by the need to efficiently compute residual dual norms, which are used in model reduction to estimate the  error of a given reduced solution.} 
  The key elements of the approach are
 (i) an empirical test space  
 for the manifold of Riesz elements associated with the parameterized functional, and
 (ii) an empirical quadrature procedure to efficiently deal with parametrically non-affine terms.
We present a number of theoretical and numerical results to  identify the different sources of error and to motivate the proposed technique,  and we compare the approach with other state-of-the-art techniques.
Finally, we investigate the effectiveness of our approach to reduce both offline and online costs associated with the computation of the time-averaged residual indicator proposed in   [Fick, Maday, Patera, Taddei, Journal of Computational Physics,
 2018].
\end{abstract}

\emph{Keywords:} Reduced basis method,  hyper-reduction, dual norm estimation.

\section{Introduction}
\label{sec:intro}
{ 
\emph{A posteriori} error estimators are designed to assess the accuracy  of a given numerical solution in a proper metric of interest.
In the context of  Model Order Reduction (MOR, \cite{quarteroni2015reduced,hesthaven2015certified}), \emph{a posteriori} error estimators are employed during the offline stage to drive the construction of the Reduced Order Model (ROM), and also during the online stage to certify the accuracy of the estimate. The vast majority of error estimators employed in MOR procedures relies on the evaluation of the dual norm of the residual: 
for nonlinear and non-parametrically affine problems, this task might be particularly demanding both in terms of memory requirements and of computational cost.
Motivated by these considerations, the objective of this paper is to develop and analyze an offline/online
computational strategy for the computation of the dual norm of parameterized functionals.
}

Given the parameter space $\mathcal{P} \subset \mathbb{R}^P$ and the domain $\Omega \subset \mathbb{R}^d$,  we introduce the Hilbert space $\mathcal{X}$ defined over $\Omega$ endowed with 
the inner product $(\cdot, \cdot)_{\mathcal{X}}$ and the induced norm
$\| \cdot \|_{\mathcal{X}} := \sqrt{(\cdot, \cdot)_{\mathcal{X}}}$.
We denote by $\mathcal{X}'$   the dual space of $\mathcal{X}$, and we define the Riesz operator $R_{\mathcal{X}}: \mathcal{X}' \to \mathcal{X}$ such that
$(R_{\mathcal{X}} \mathcal{L}, \, v)_{\mathcal{X}} = \mathcal{L}(v)$ for all $v \in \mathcal{X}$ and $\mathcal{L} \in \mathcal{X}'$. Exploiting these definitions,  our goal is to reduce 
the \emph{marginal} (i.e., in the limit of many queries) cost associated with the computation of the dual norm of the parameterized functional $\mathcal{L}_{\mu}$,
\begin{equation}
\label{eq:dual_norm_definition}
\|  \mathcal{L}_{\mu}  \|_{\mathcal{X}'}:=
\sup_{v \in \mathcal{X}} \, \frac{\mathcal{L}_{\mu}(v)}{\|  v\|_{\mathcal{X}}} =
\| R_{\mathcal{X}} \mathcal{L}_{\mu}  \|_{\mathcal{X}},
\end{equation}
for $\mu \in \mathcal{P}$.
We are interested in functionals of the form
\begin{equation}
\label{eq:functional_definition}
\mathcal{L}_{\mu}(v)
=
\int_{\Omega} \, \eta(x; v, \mu) \,  dx,
\quad
\eta(x; v, \mu) =
\Upsilon_{\mu}(x) \cdot F(x; v),
\end{equation}
where $\Upsilon: \Omega \times \mathcal{P} \to \mathbb{R}^D$ is a given function of spatial coordinate   and  parameter, and
$F$ is a linear function of $v$ and possibly its derivatives.
{ 
 Throughout the work, we shall consider
$H_0^1(\Omega) \subset \mathcal{X} \subset H^1(\Omega)$, and
$F(x; v) = [v(x), \nabla v(x)]$.  However, our discussion can be extended to other classes of functionals and other choices of the ambient space $\mathcal{X}$.}

{ 
If $ \mathcal{L}_{\mu}$ is \emph{parametrically-affine}, i.e.
$\mathcal{L}_{\mu}(v) = \sum_{m=1}^M \, \Theta_m(\mu) \mathcal{L}_m(v)$ with 
$M= \mathcal{O}(1)$,
then computations of  $\|  \mathcal{L}_{\mu} \|_{\mathcal{X}'}$ can be performed efficiently exploiting the linearity of the Riesz operator; on the other hand, if 
$ \mathcal{L}_{\mu}$ is not  parametrically-affine, hyper-reduction techniques should be employed.
}
Over the past decade, many authors have proposed hyper-reduction  procedures for the efficient 
evaluation of parameterized integrals: 
these techniques can be classified as 
\emph{Approximation-Then-Integration}  (ATI) approaches or 
\emph{Empirical Quadrature} (EQ)  approaches.
ATI approaches (i)  construct a suitable reduced basis and an associated interpolation/approximation system for $\Upsilon_{\mu}$ in \eqref{eq:functional_definition}, and then 
(ii) precompute all required integrals during an offline stage.
Conversely, EQ procedures 
--- {  also known as Reduced-Order Quadrature  procedures (\cite{antil2013two})} --- 
directly approximate the integrals in \eqref{eq:functional_definition}  by developing a specialized low-dimensional empirical quadrature rule.
Representative  ATI approaches for model reduction applications are Gappy-POD,
which was first proposed in \cite{everson1995karhunen} for image reconstruction and then 
adapted to MOR in 
\cite{bui2003proper,carlberg2011efficient,astrid2008missing},
and the Empirical Interpolation Method (EIM, \cite{barrault2004empirical,grepl2007efficient})
and related approaches (\cite{drohmann2012reduced,nguyen2008best,chaturantabut2010nonlinear,ryckelynck2009hyper,daversin2015simultaneous}).
On the other hand, EQ approaches have been proposed in 
\cite{an2008optimizing,antil2013two,farhat2015structure,patera2017lp}.

{ 
As explained in \cite{patera2017lp}, although ATI approaches are quite effective in practice, the objective of function approximation and integration are arguably quite different and it is thus difficult to relate the error in integrand approximation to the  error in  dual norm prediction.
As a result, rather conservative selections of the approximation tolerance are required in practice to ensure that the dual norm estimate is sufficiently accurate. On the other hand, since the test space in \eqref{eq:dual_norm_definition} is infinite-dimensional, EQ approaches cannot be applied as is, unless 
the Riesz element $\xi_{\mu} := R_{\mathcal{X}} \, \mathcal{L}_{\mu}$ is known explicitly.
}

We here propose an offline/online procedure that relies on two key ingredients: empirical test spaces, and empirical quadrature.
We resort to Proper Orthogonal Decomposition (POD, \cite{berkooz1993proper,sirovich1987turbulence,volkwein2011model})
to generate a 
$J_{\rm es}$-dimensional 
reduced space $\mathcal{X}_{J_{\rm es}}$ that approximates the manifold of Riesz elements associated with $\mathcal{L}$,
$\mathcal{M}_{\mathcal{L}}= \{ R_{\mathcal{X}} \mathcal{L}_{\mu}: \; \; \mu \in \mathcal{P} \}$. Then, we approximate  the dual norm
$\| \mathcal{L}_{\mu}  \|_{\mathcal{X}'}$ using
$\| \mathcal{L}_{\mu}  \|_{\mathcal{X}_{J_{\rm es}}'}$.
{  
Estimation of $\| \mathcal{L}_{\mu}  \|_{\mathcal{X}_{J_{\rm es}}'}$ involves the approximation of $J_{\rm es}$ integrals: we might thus rely on 
empirical quadrature procedures to estimate the latter dual norm.
}

The contributions of the present work are threefold:
(i) an actionable procedure for the construction of empirical test spaces;
(ii) the reinterpretation of the EQ problem as a sparse representation problem and the application of EQ to dual norm estimation; and
(iii) a thorough numerical and also theoretical  investigation of the performance of several ATI and EQ+ES hyper-reduction techniques for dual norm prediction.
\begin{itemize}
\item[(i)]
Empirical test spaces are closely related to $\ell_2$-embeddings, which have been recently proposed for MOR applications by Balabanov and Nouy in \cite{balabanov2018randomized}. In section \ref{sec:empirical_test_space}, 
we formally link empirical test spaces for dual norm calculations to $\ell_2$-embeddings, and we discuss the differences in their practical constructions.
Furthermore, in section \ref{sec:a_priori_error}, we present an \emph{a priori} error bound that motivates our construction.
\item[(ii)]
The problem of sparse representation --- or equivalently best subset selection --- 
 has been widely studied in the optimization, statistics and signal processing literature, and several solution strategies are available,
{  including $\ell^1$ relaxation \cite{donoho2006compressed},
Greedy algorithms \cite{tropp2004greed},
and more recently  mixed integer optimization procedures \cite{bertsimas2016best}.}
In this work, we show that the problem of EQ can be recast as a sparse representation problem, and we consider three different approaches based on   $\ell^1$ minimization (here referred to as $\ell^1$-EQ), on   the EIM greedy algorithm (EIM-EQ),  and
  on Mixed Integer Optimization (MIO-EQ), respectively.
  {We remark that    $\ell^1$-EQ   has been  first proposed in \cite{patera2017lp} for empirical quadrature,    while  EIM-EQ has been first proposed in    \cite{antil2013two}; on the other hand, 
 MIO-EQ  is new in this context. }
In order to reduce the cost associated with the construction of the quadrature rule, we further propose a divide-and-conquer strategy to reduce the dimension of the optimization problem to be solved offline.
\item[(iii)]
To our knowledge, a detailed comparison of ATI approaches and EQ approaches for hyper-reduction is currently missing in the literature. In sections \ref{sec:theoretical_comparison_ITI_vs_EQ_ES} and 
\ref{sec:numerics} we 
present numerical and also theoretical 
results that offer insights about the potential benefits and drawbacks of ATI and EQ techniques, in the context of dual norm prediction. 
\end{itemize}

The paper is organized as follows.
In section \ref{sec:methodology}, we present the computational procedure, and we prove an \emph{a priori} error bound that relates the prediction error in dual norm estimation to the quadrature error and to the discretization error associated with the introduction of the empirical test space.
In section \ref{sec:theoretical_comparison_ITI_vs_EQ_ES}, 
we review ATI approaches for dual norm calculations and we offer several remarks concerning the benefits and the drawbacks of the proposed strategies. Furthermore, in section 
\ref{sec:numerics}, we present numerical results to compare the performance of our EQ+ES method for the three  EQ procedures considered with a representative ATI approach; in particular, we apply the proposed technique to the computation of the dual time-averaged residual presented in 
\cite{fick2017reduced},
associated with the  Reduced Basis approximation of the solution to   a 2D unsteady incompressible Navier-Stokes problem.
Finally,  in section \ref{sec:conclusion}, we  summarize the key contributions and identify several future research directions.



\section{Methodology}
\label{sec:methodology}
 \subsection{Formulation}
\label{sec:formulation}

In view of the presentation of the methodology, we introduce the high-fidelity (truth) space  $\mathcal{X}_{\rm hf} = {\rm span} \{ \varphi_i\}_{i=1}^{\mathcal{N}} \subset \mathcal{X}$ and the high-fidelity quadrature rule
$$
\mathcal{Q}^{\rm hf}(v):= \sum_{i=1}^{\mathcal{N}_{\rm q}} \, \rho_i^{\rm hf} \, v(x_i^{\rm hf}).
$$
We endow $\mathcal{X}_{\rm hf}$ with the inner product $(w,v)_{\mathcal{X}_{\rm hf}} =
\mathcal{Q}^{\rm hf}( \lambda(\cdot; w,v) )$ for a suitable choice\footnote{
In all our examples, we consider 
$\lambda(\cdot; w, v) =  \nabla w \cdot \nabla v + u \, v$.
} of $\lambda$, and we approximate the functional in \eqref{eq:functional_definition} as
$\mathcal{L}_{\mu, \rm hf}(v) := \mathcal{Q}^{\rm hf}( \eta(\cdot; v, \mu)  )$.
In the remainder, we shall assume that $\| \mathcal{L}_{\mu, \rm hf}  \|_{\mathcal{X}_{\rm hf}'} \approx  \| \mathcal{L}_{\mu}  \|_{\mathcal{X}'} $ for all $\mu \in \mathcal{P}$.
Since our approach builds upon the high-fidelity discretization, in the following we exclusively deal with high-fidelity quantities: to simplify notation, 
we omit the subscript $(\cdot)_{\rm hf}$.

Given $v \in \mathcal{X}$, we denote by $\mathbf{v} \in \mathbb{R}^{\mathcal{N}}$ the corresponding vector of coefficients,
$v(x) = \sum_{i=1}^{\mathcal{N}} \, {v}_i \, \varphi_i(x)$;  similarly, given $\mathcal{L} \in \mathcal{X}'$, we define  $\boldsymbol{\mathcal{L}} \in \mathbb{R}^{\mathcal{N}}$ such that 
$\left(\boldsymbol{\mathcal{L}}   \right)_i = \mathcal{L}(\varphi_i)$.
By straightforward calculations, we find the following expression for the dual norm:
\begin{equation}
\label{eq:dual_norm_algebraic}
L(\mu) :=
\|  \mathcal{L}_{\mu} \|_{\mathcal{X}'}
=
\sup_{v \in \mathcal{X}} \, 
\frac{     \mathcal{Q}^{\rm hf}( \eta(\cdot; v, \mu)  )      }{\| v \|_{\mathcal{X}}} \, = \,
\sqrt{ 
\boldsymbol{\mathcal{L}}_{\mu}^T \, \mathbb{X}^{-1} 
\boldsymbol{\mathcal{L}}_{\mu}
},
\end{equation}
where $\mathbb{X}_{i,j} = (\varphi_j, \varphi_i)_{\mathcal{X}}$. 
Evaluation of $L$ in \eqref{eq:dual_norm_algebraic} for a given $\mu \in \mathcal{P}$ requires the solution to a linear system of size $\mathcal{N}$, which costs $C_{\rm riesz} = \mathcal{O}(\mathcal{N}^s)$ for some $s \in [1,2)$.

To reduce the costs, we propose to substitute $\mathcal{X}$ in \eqref{eq:dual_norm_algebraic} with the $J_{\rm es}$-dimensional empirical test space 
$\mathcal{X}_{J_{\rm es}} = {\rm span} \{ \phi_j  \}_{j=1}^{J_{\rm es}}$ where $(\phi_j, \phi_i)_{\mathcal{X}} = \delta_{i,j}$,
and the high-fidelity quadrature rule $ \mathcal{Q}^{\rm hf}$ with the $Q_{\rm eq}$-dimensional quadrature rule
$$
\mathcal{Q}^{\rm eq} (v) \, = \,
\sum_{q=1}^{Q_{\rm eq}} \, \rho_q^{\rm eq} \, v(x_q^{\rm eq}),
\quad
{\rm for \, some} \;
\{
\rho_q^{\rm eq}, x_q^{\rm eq}
\}_{q=1}^{Q_{\rm eq}} \subset \mathbb{R} \times \Omega.
$$
Exploiting the fact that $\{ \phi_j \}_j$ is an orthonormal basis of $\mathcal{X}_{J_{\rm es}}$, we obtain the EQ+ES estimate of $L(\mu)$:
\begin{subequations}
\label{eq:EQ_ES_estimate}
\begin{equation}
\begin{array}{rl}
\displaystyle{ L_{J_{\rm es}, Q_{\rm eq}}(\mu) \, = }
&
\displaystyle{
\sup_{v \in \mathcal{X}_{J_{\rm es}}} \,
\frac{\mathcal{Q}^{\rm eq}(  \eta(\cdot; v, \mu )   ) }{\|   v \|_{\mathcal{X}}} \,
= \,
\sqrt{
\sum_{j=1}^{J_{\rm es}} \,
\left(
\mathcal{Q}^{\rm eq}(  \eta(\cdot; \phi_j, \mu ) 
\right)^2
}
}
\\[3mm]
=
&
\displaystyle{
\|  \mathbb{H}(\mu) \boldsymbol{\rho}^{\rm eq} \|_2,
}
\end{array}
\end{equation}
where 
\begin{equation}
\left( \mathbb{H}(\mu)  \right)_{q,j} \, = \,
 \eta(x_q^{\rm eq}; \phi_j , \mu) \, = \,
 \sum_{i=1}^D \,
 \left( \Upsilon_{\mu}(x_q^{\rm eq})  \right)_i \,
  \left( F(x_q^{\rm eq}; v)  \right)_i .
\end{equation}
\end{subequations}
It is straightforward to verify that,  if  $\eta$ is of the form  \eqref{eq:functional_definition},
computation of the matrix $\mathbb{H}(\mu)$ scales with $\mathcal{O}(  D \, J_{\rm es} \,  Q_{\rm eq}  )$: provided that
$J_{\rm es}, \,  Q_{\rm eq}  \ll \mathcal{N}$, evaluation of \eqref{eq:EQ_ES_estimate} is thus significantly less expensive than the evaluation of \eqref{eq:dual_norm_algebraic}.

Below, we discuss how to practically build the space $\mathcal{X}_{J_{\rm es}}$ (section \ref{sec:empirical_test_space}), and  
the quadrature rule $\mathcal{Q}^{\rm eq}(\cdot)$
(section \ref{sec:empirical_quadrature}).  Then, in section \ref{sec:full_procedure}, we briefly summarize the overall procedure and we comment on offline and online costs.
Finally, in section \ref{sec:a_priori_error}, we present an \emph{a priori} error bound
that shows that the prediction
error is the sum of two contributions: a quadrature error, and a discretization
error associated with the empirical test space.

\begin{remark}
\textbf{EQ estimate of $\mathbf{L}(\boldsymbol{\mu})$.}
The EQ estimate of $L(\mu)$, 
$$
L_{Q_{\rm eq}}(\mu) = \sup_{v \in \mathcal{X}} \, \frac{\mathcal{Q}^{\rm eq}(\eta(\cdot; v, \mu))}{\|  v \|_{\mathcal{X}}},
$$
is not in general related to $L(\mu)$ for $Q_{\rm eq} < \mathcal{N}_{\rm q}$.
For this reason, EQ approaches cannot be applied as is to estimate $L(\mu)$.
\end{remark}

\subsection{Empirical test space}
\label{sec:empirical_test_space}

Recalling the Riesz representation theorem, we find  that
 $L({\mu})^2  =  \mathcal{L}_{\mu} \left( \xi_{\mu} \right),$ for all $\mu \in \mathcal{P}$,
$\xi_{\mu} = R_{\mathcal{X}} \, \mathcal{L}_{\mu}$; 
 therefore, if 
$\mathcal{X}_{J_{\rm es}}$  accurately approximates the elements of the manifold
$\mathcal{M}_{\mathcal{L}}
\, = \,
 \left\{ 
\xi_{\mu}:
 \; 
 \mu \in \mathcal{P}
\right\}$, we expect that
\begin{equation}
\label{eq:caL_J}
L(\mu)  = \sup_{v \in \mathcal{M}_{\mathcal{L}}} \, 
\frac{\mathcal{L}_{\mu}(v)}{\| v \|_{\mathcal{X}}} \,
\approx \, \sup_{v \in \mathcal{X}_{J_{\rm es}} } \,
\frac{\mathcal{L}_{\mu}(v)}{\| v \|_{\mathcal{X}}}
=: \|  \mathcal{L}_{\mu}    \|_{\mathcal{X}_{J_{\rm es}}'}.
\end{equation}
We provide a rigorous justification of this approximation in section \ref{sec:a_priori_error}.

We construct the approximation space $\mathcal{X}_{J_{\rm es}}$ using POD.
First, we generate the training set $\Xi^{\rm train, es} = \{  \mu^{\ell} \}_{\ell=1}^{     n_{\rm train}^{\rm es}} \subset \mathcal{P}$,
where   $\mu^1,\ldots,\mu^{n_{\rm train}^{\rm es}} 
\overset{\rm iid}{\sim} {\rm Uniform}(\mathcal{P})$;
then we compute $\xi^{\ell} =  \xi_{\mu^{\ell}}$ for $\ell=1,\ldots,n_{\rm train}^{\rm es}$; finally, we use the snapshots $\{  \xi^{\ell} \}_{\ell=1}^{n_{\rm train}^{\rm es}}$ to compute the POD space $\mathcal{X}_{J_{\rm es}}$ (see \cite{sirovich1987turbulence}) based on the $\mathcal{X}$-inner product.

\begin{remark}
\textbf{Choice of $n_{\rm train}^{\rm es}$ and ${J_{\rm es}}$.}
In order to validate the choice of $n_{\rm train}^{\rm es}$ and ${J_{\rm es}}$, 
we might introduce $n_{\rm test}^{\rm es}$ additional samples
$\tilde{\mu}^1,\ldots,\tilde{\mu}^{n_{\rm test}^{\rm es}} \sim {\rm Uniform}(\mathcal{P})$, and 
compute the  error indicators\footnote{We observe that 
$E_{{J_{\rm es}},n_{\rm train}^{\rm es}, n_{\rm test}^{\rm es}}^{(\infty)}$ is equivalent to the error indicator proposed in 
\cite{buhr2017randomized}.
}
\begin{equation}
\label{eq:error_indicator_max}
E_{{J_{\rm es}},n_{\rm train}^{\rm es}, n_{\rm test}^{\rm es}}^{(\infty)}
=
\max_{k=1,\ldots,n_{\rm test}^{\rm es}}
\,
\| 
\Pi_{\mathcal{X}_{J_{\rm es}}^{\perp}}
\,
\xi_{\tilde{\mu}^k}
  \|_{\mathcal{X}},
\end{equation}
and
\begin{equation}
\label{eq:error_indicator_L2}
E_{{J_{\rm es}},n_{\rm train}^{\rm es}, n_{\rm test}^{\rm es}}^{(2)}
=
\frac{1}{n_{\rm test}^{\rm es}} \,
\sum_{k=1}^{n_{\rm test}^{\rm es}} \,
\| 
\Pi_{\mathcal{X}_{J_{\rm es}}^{\perp}}
\,
\xi_{\tilde{\mu}^k}
  \|_{\mathcal{X}}^2,
\end{equation}
where $ \mathcal{X}_{J_{\rm es}}^{\perp} $ denotes the orthogonal complement of $ \mathcal{X}_{J_{\rm es}}$, and
$\Pi_{\mathcal{X}_{J_{\rm es}}^{\perp}}: \mathcal{X} \to
{\mathcal{X}_{J_{\rm es}}^{\perp}}
$ is the orthogonal projection operator.
$E_{{J_{\rm es}},n_{\rm train}^{\rm es}, n_{\rm test}^{\rm es}}^{(\infty)}$ provides an estimate of the discretization error that enters in the \emph{a priori} error bound in Proposition \ref{th:a_priori}, 
while $E_{{J_{\rm es}},n_{\rm train}^{\rm es}, n_{\rm test}^{\rm es}}^{(2)}$ can be compared with the in-sample error 
$ E_{{J_{\rm es}},n_{\rm train}^{\rm es}}^{(2)}
=
\frac{1}{n_{\rm train}^{\rm es}} \,
\sum_{\ell=1}^{n_{\rm train}^{\rm es}} \,
\| 
\Pi_{\mathcal{X}_{J_{\rm es}}^{\perp}} \, \xi_{\mu^{\ell}}
  \|_{\mathcal{X}}^2$ to assess the representativity of the training set $\Xi^{\rm train, es}$.
\end{remark}

\subsubsection*{Connection with $\ell_2$-embeddings}

Exploiting  notation introduced in section \ref{sec:formulation}, and recalling that
$\{ \phi_j \}_j$ is an orthonormal basis of $\mathcal{X}_{J_{\rm es}}$,
 we can rewrite \eqref{eq:caL_J} as follows:
\begin{equation}
\label{eq:calLJ_algebraic}
\|  \mathcal{L}_{\mu}    \|_{\mathcal{X}_{J_{\rm es}}'}
 = \| \mathbb{X}_{J_{\rm es}}^T \boldsymbol{\mathcal{L}}_{\mu}  \|_2,
\end{equation}
where $\mathbb{X}_{J_{\rm es}} = [ \boldsymbol{\phi}_1,\ldots,  \boldsymbol{\phi}_{J_{\rm es}}]$.
In \cite{balabanov2018randomized} (see \cite[section 3.1]{balabanov2018randomized}), the authors propose to estimate $L(\mu) $ as 
\begin{equation}
\label{eq:calLJ_algebraic_2}
L_{\Theta}(\mu) = \| \Theta    \mathbb{X}^{-1} \boldsymbol{\mathcal{L}}_{\mu}  \|_2,
\end{equation}
where $\Theta  \in \mathbb{R}^{J_{\rm es} \times \mathcal{N}}$ is called $\mathcal{X} \to \ell_2$ embedding.
By comparing \eqref{eq:calLJ_algebraic} with \eqref{eq:calLJ_algebraic_2}, we deduce that the approach proposed here corresponds to that in 
\cite{balabanov2018randomized}, provided that $\Theta =  \mathbb{X}_{J_{\rm es}}^T \mathbb{X} $.

The key difference between the two approaches is in the practical construction of 
$\Theta$. In \cite{balabanov2018randomized}, the authors consider $\Theta = \boldsymbol{\Omega} \mathbb{Q}$ where 
$\mathbb{Q} \in \mathbb{R}^{\mathcal{N} \times \mathcal{N}}$ is the upper-triangular matrix associated with the  Cholesky factorization  of $\mathbb{X}$, while $\boldsymbol{\Omega} \in \mathbb{R}^{J_{\rm es} \times \mathcal{N}}$ is the realization of a random matrix
--- distributed according to  the rescaled Gaussian distribution, the rescaled Rademacher distribution, or the partial subsampled randomized Hadamard transform (P-SRHT).
For these three choices of the sampling distribution, the authors prove \emph{a priori} error bounds in probability, which provide estimates for the minimum value  of $J_{\rm es}$ required to achieve a target accuracy.

{ 
Recalling the optimality of POD (see, e.g., \cite{volkwein2011model}),
for sufficiently large values of $n_{\rm train}^{\rm es}$, we expect that our approach leads to smaller test spaces --- and thus more efficient online calculations for any target accuracy.
On the other hand, the construction of  $\Theta$  in \cite{balabanov2018randomized} requires significantly less offline resources than the construction of $\mathcal{X}_{J_{\rm es}}$.}
For this reason,   the choice between the two approaches is extremely problem- and architecture-dependent.

\subsection{Empirical quadrature}
 \label{sec:empirical_quadrature} 

We shall now address the problem of determining the quadrature rule
$$
\mathcal{Q}^{\rm eq} (v) \, = \,
\sum_{q=1}^{Q_{\rm eq}} \, \rho_q^{\rm eq} \, v(x_q^{\rm eq})
$$
in \eqref{eq:EQ_ES_estimate}. Towards this end, we assume that 
$\{  x_q^{\rm eq} \}_q \subset \{  x_i^{\rm hf} \}_i$; then, we define the quadrature rule operator
$\mathcal{Q}: C(\Omega) \times \mathbb{R}^{\mathcal{N}_{\rm q}} \to \mathbb{R}$ such that
$$
\mathcal{Q} (v, \boldsymbol{\rho} ) \, = \,
\sum_{i=1}^{  \mathcal{N}_{\rm q}} \, \rho_i \, v(x_i^{\rm hf}).
$$
Exploiting the latter definition, we formulate the problem of finding $\{ \rho_q^{\rm eq},  x_q^{\rm eq} \}_q$ as the problem of finding $\boldsymbol{\rho}^{\star} \in \mathbb{R}^{\mathcal{N}_{\rm q}} $ such that
\begin{enumerate}
\item
the number of nonzero entries in $\boldsymbol{\rho}^{\star}$ is as small as possible;
\item
the corresponding quadrature rule 
$\mathcal{Q}^{\star}(\cdot) := \mathcal{Q}( \cdot, \boldsymbol{\rho}^{\star} )$
satisfies
\begin{equation}
\label{eq:accuracy_constraints}
\big|  
\mathcal{Q}^{\star}(  \eta(\cdot; \phi_j, \mu)   )  
\, - \, 
\mathcal{Q}^{\rm hf}( \eta(\cdot; \phi_j, \mu) ) 
 \big| 
 \leq \delta,
 \qquad
j=1,\ldots,J_{\rm es},
\end{equation}
for all $\mu$ in the training set $\Xi^{\rm train,eq} = \{ \mu^{\ell}  \}_{\ell=1}^{n_{\rm train}^{\rm eq}}$, and 
\begin{equation}
\label{eq:identity_constraints}
\big|  
\mathcal{Q}^{\star}(  1  )  
\, - \, 
\mathcal{Q}^{\rm hf}( 1 ) 
 \big| 
 \leq \delta.
\end{equation}
\end{enumerate}
Given a (approximate) solution $\boldsymbol{\rho}^{\star}$, we then extract the strictly non-null quadrature weights 
$\{  \rho_q^{\rm eq}, x_q^{\rm eq} \}_{q=1}^{Q_{\rm eq}} =
\{
\{  \rho_i^{\star}, x_i^{\rm hf} \}_{i}: \,
i \in \{  
k: \rho_k^{\star} \neq 0
\}
\}
$.

{ 
The first requirement corresponds to minimizing the number of non-null weights $Q_{\rm eq}$:
recalling \eqref{eq:EQ_ES_estimate}, minimizing $Q_{\rm eq}$ is equivalent to minimizing the online costs for a given   choice of the empirical test space. Condition \eqref{eq:accuracy_constraints} controls the accuracy of the dual norm estimate, as discussed in the error analysis.
On the other hand, as explained in \cite{yanoalp}, condition \eqref{eq:identity_constraints} is empirically found to improve the accuracy of the EQ procedure when the integral is close to zero due to the cancellation of the integrand in different parts of the domain. Finally, we remark that in 
\cite{patera2017lp,yanoalp} the authors propose to add the non-negativity constraint 
\begin{equation}
\label{eq:positivity_constraint}
\rho_i^{\star} \geq 0 ,\quad
i=1,\ldots, \mathcal{N}_{\rm q}.
\end{equation}
As discussed later in this section, the non-negativity  constraint reduces by half the size of the problem  that is  practically solved during the offline stage for two of the EQ methods ($\ell^1$-EQ and MIO-EQ) employed in this work; furthermore, we observe that the non-negativity   of the weights is used in \cite{yanodiscontinuous} to prove a stability result for a  Galerkin ROM.} We here consider both the case of non-negative weights and the case of real-valued weights. We anticipate that for the latter case we are able to prove a theoretical result that motivates the approach.

These desiderata can be translated in the following minimization statement:
\begin{subequations}
\label{eq:benchmark_optimization_no_positivity}
\begin{equation}
\min_{\boldsymbol{\rho} \in \mathbb{R}^{\mathcal{N}_{\rm q}}   }
\,
\|  \boldsymbol{\rho}  \|_0,
\quad
{\rm s.t.} \; 
\|  {\mathbb{G}}  \boldsymbol{\rho} -  {\mathbf{y}}^{\rm hf}   \|_{\infty} \leq \delta    
\end{equation}
where $\| \cdot    \|_{\infty} $ denotes the $\infty$ norm,
$\|  \mathbf{v}  \|_{\infty} = \max_k |v_k |$,
$\|  \cdot \|_0$ denotes the $\ell^0$ "norm"\footnote{
$\|  \cdot \|_0$ is not a norm since it does not satisfy the homogeneity property; nevertheless, it is   called norm in the vast majority of the statistics and optimization  literature.
}
$\|  \boldsymbol{\rho} \|_0 = \# \{ \rho_i \neq 0: \, i=1,\ldots, \mathcal{N}_{\rm q} \}$, and 
$\mathbb{G} \in \mathbb{R}^{K \times  \mathcal{N}_{\rm q}}$ and 
$\mathbf{y}^{\rm hf} \in \mathbb{R}^{K}$, $K=n_{\rm train}^{\rm eq} J_{\rm es} + 1$,
are defined as
\begin{equation}
\mathbb{G} =
\left[
\begin{array}{ccc}
\eta(x_1^{\rm hf}; \phi_1, \mu^1 ), & \ldots &  \eta(x_{\mathcal{N}_{\rm q}}^{\rm hf}; \phi_1, \mu^1 ) \\ 
& \vdots & \\
\eta(x_1^{\rm hf}; \phi_{J_{\rm es}}, \mu^{n_{\rm train}^{\rm eq}} ), & \ldots &  \eta(x_{\mathcal{N}_{\rm q}}^{\rm hf}; \phi_{J_{\rm es}}, \mu^{n_{\rm train}^{\rm eq}} ) \\[2mm]
1 & \ldots & 1 \\
\end{array}
\right],
\end{equation}
\begin{equation}
\mathbf{y}^{\rm hf} = \left[
\mathcal{Q}^{\rm hf} \left( \eta(\cdot; \phi_1, \mu^1 )  \right),  \ldots  \, , \,
\mathcal{Q}^{\rm hf} \left( \eta(\cdot; \phi_{J_{\rm es}}, \mu^{n_{\rm train}^{\rm eq}} )  \right) , 
\mathcal{Q}^{\rm hf} \left( 1   \right)    \right]
\end{equation}
\end{subequations}
Alternatively, if we choose to include the non-negativity  constraint, we obtain
\begin{equation}
\label{eq:benchmark_optimization}
\min_{\boldsymbol{\rho} \in \mathbb{R}^{\mathcal{N}_{\rm q}}   }
\,
\|  \boldsymbol{\rho}  \|_0,
\quad
{\rm s.t.} \;  \left\{
\begin{array}{l}
\displaystyle{
\|  {\mathbb{G}}  \boldsymbol{\rho} -  {\mathbf{y}}^{\rm hf}   \|_{\infty} \leq \delta    
} \\[2mm]
\boldsymbol{\rho} \geq \mathbf{0} \\
\end{array}
\right.
\end{equation}

Problems   \eqref{eq:benchmark_optimization_no_positivity}  and  \eqref{eq:benchmark_optimization} 
 can be interpreted as sparse representation problems where the input data
 --- the high-fidelity integrals $\mathbf{y}^{\rm hf}$ ---
 are noise-free. We emphasize that there are important differences between the two problems considered here and the sparse representation problems typically considered in the statistics literature, particularly in compressed sensing (CS, \cite{donoho2006compressed}).
 CS relies on the assumption that the original signal is sparse, and that the coherence among different columns of $\mathbb{G}$ is small (see, e.g., \cite{bruckstein2009sparse} for a thorough discussion). In our setting, these conditions are not expected to hold due to the smoothness in space  of the elements of the manifold and to the deterministic nature of the problem. As a result, techniques developed and analyzed in the CS literature might be highly suboptimal in our context.
 After the seminal work by Bertsimas et al. \cite{bertsimas2016best}, Hastie et al. \cite{hastie2017extended} presented  detailed empirical comparisons for several state-of-the-art approaches  for  datasets characterized by a wide spectrum of Signal-to-Noise Ratios.

As stated in the introduction, we here resort to three EQ approaches to approximate 
\eqref{eq:benchmark_optimization_no_positivity} and \eqref{eq:benchmark_optimization}. While $\ell^1$-EQ and EIM-EQ have been first presented in \cite{patera2017lp} and \cite{antil2013two}, 
MIO-EQ is new in this context. In the next three sections, we briefly illustrate  the three EQ techniques.

\begin{remark}
\textbf{Dependence on the basis $\{ \phi_j \}_j$.}
Conditions \eqref{eq:accuracy_constraints} depend on the choice of the basis of $\mathcal{X}_{J_{\rm es}}$. In particular, given
$\tilde{\phi} = \sum_{j=1}^{J_{\rm es}} a_j \phi_j$, if we define 
${\eta}^{j, \ell} =   \eta( \cdot; {\phi}_j, \mu^{\ell} )
$, we obtain
$$
\begin{array}{l}
\displaystyle{
\big|  
\mathcal{Q} \left( \eta(\cdot; \tilde{\phi},  \mu^{\ell} )
 ; \boldsymbol{\rho}^{\star}  \right) 
\, - \, \mathcal{Q}^{\rm hf} \left(
\eta(\cdot; \tilde{\phi},  \mu^{\ell} )
 \right) 
 \big| 
=
\big|
\sum_{j=1}^{J_{\rm es}} \,
\left(
\mathcal{Q}\left( {\eta}^{j, \ell} ;
\boldsymbol{\rho}^{\star} \right)
\, - \,
\mathcal{Q}^{\rm hf}\left(  {\eta}^{j, \ell} \right)
\right)
  \, a_j
\big|
}
\\[3mm]
\displaystyle{
\leq
\delta \|  \mathbf{a} \|_1
\leq
\delta  \| \mathbf{1} \|_2
\|  \mathbf{a} \|_2
=
\delta  \sqrt{J_{\rm es}}  \|  \tilde{\phi} \|_{\mathcal{X}}
}
\\
\end{array}
$$
{
Note that in the first inequality we used \eqref{eq:accuracy_constraints}, while in the second inequality we used Cauchy-Schwarz inequality.
}
\end{remark}

\subsubsection{$\ell^1$ relaxation ($\ell^1$-EQ)}
\label{sec:ell1_EQ}
Following \cite{patera2017lp},
we consider the convex relaxation of \eqref{eq:benchmark_optimization}:
$$
\min_{\boldsymbol{\rho} \in \mathbb{R}^{\mathcal{N}_{\rm q}}   }
\,
\|  \boldsymbol{\rho}  \|_1,
\quad
{\rm s.t.} \; 
\left\{
\begin{array}{l}
\|  {\mathbb{G}}  \boldsymbol{\rho} -  {\mathbf{y}}^{\rm hf}   \|_{\infty} \leq \delta \\
  \boldsymbol{\rho}   \geq 0    \\
\end{array}
\right.
$$
which can be restated  as a linear programming problem: 
\begin{subequations}
\label{eq:ell1_minimization}
\begin{equation}
\min_{\boldsymbol{\rho} \in \mathbb{R}^{\mathcal{N}_{\rm q}}   }
\,
\mathbf{1}^T  \boldsymbol{\rho},
\quad
{\rm s.t.} \; 
\left\{
\begin{array}{l}
\mathbb{A}  \,  \boldsymbol{\rho} \leq \mathbf{b} \\
 \boldsymbol{\rho} \geq  \mathbf{0} \\
\end{array}
\right.
\end{equation}
 where
\begin{equation}
\mathbb{A}  =  
\left[
\begin{array}{c}
{\mathbb{G}}  \\
 - {\mathbb{G}} \\
\end{array}
\right],
\quad
\mathbf{b} =
\left[
\begin{array}{c}
 {\mathbf{y}}^{\rm hf} + \delta
\\
- {\mathbf{y}}^{\rm hf}  + \delta
\\
\end{array}
\right] 
\end{equation}
\end{subequations}

Proceeding in a similar way, we obtain the  
$\ell^1$-convexification of 
 \eqref{eq:benchmark_optimization_no_positivity}:
\begin{equation}
\label{eq:ell1_minimization_aux}
\min_{\boldsymbol{\rho} \in \mathbb{R}^{\mathcal{N}_{\rm q}}   }
\,
\|  \boldsymbol{\rho}  \|_1,
\quad
{\rm s.t.} \; 
{\mathbb{A}}  \boldsymbol{\rho} \leq  \mathbf{b}.
\end{equation}
{ 
If $(\boldsymbol{\rho}^{1,\star},   \boldsymbol{\rho}^{2,\star})$
is the solution to the linear programming problem
\begin{equation}
\label{eq:ell1_minimization_no_positivity}
\min_{\boldsymbol{\rho}^1,\boldsymbol{\rho}^2  \in \mathbb{R}^{\mathcal{N}_{\rm q}}   }
\,
\mathbf{1}^T  \left( \boldsymbol{\rho}^1 +  \boldsymbol{\rho}^2 \right),
\quad
{\rm s.t.} \; 
\left\{
\begin{array}{l}
\mathbb{A}  \, \left( \boldsymbol{\rho}^1  -   \boldsymbol{\rho}^2 \right) \leq \mathbf{b} \\
 \boldsymbol{\rho}^1,  \boldsymbol{\rho}^2  \geq  \mathbf{0}, \\
\end{array}
\right.
\end{equation}
then, 
$\boldsymbol{\rho}^{\star} = \boldsymbol{\rho}^{1,\star} -  \boldsymbol{\rho}^{2,\star}$ solves \eqref{eq:ell1_minimization_aux}:
as a result, \eqref{eq:ell1_minimization_no_positivity} can be employed to find solutions to \eqref{eq:ell1_minimization_aux}.
To prove the latter statement, we first observe that 
if  $(\boldsymbol{\rho}^{1,\star},   \boldsymbol{\rho}^{2,\star})$ solves \eqref{eq:ell1_minimization_no_positivity}, then
$\rho_i^{1,\star} \rho_i^{2,\star} = 0$ for $i=1,\ldots,\mathcal{N}_{\rm q}$; therefore, the vector
$\boldsymbol{\rho}^{\star} = \boldsymbol{\rho}^{1,\star} -  \boldsymbol{\rho}^{2,\star}$ satisfies
the constraints in \eqref{eq:ell1_minimization_aux}, and
$$
\begin{array}{ll}
\left(\boldsymbol{\rho}^{\star}\right)^+ := \max\{ \boldsymbol{\rho}^{\star}, \mathbf{0}  \} = \boldsymbol{\rho}^{1,\star}, &
\left(\boldsymbol{\rho}^{\star}\right)^- := -\min\{ \boldsymbol{\rho}^{\star}, \mathbf{0}  \} = \boldsymbol{\rho}^{2,\star}, \\[3mm]
\|  \boldsymbol{\rho}^{\star}  \|_1 = 
\mathbf{1}^T  \left( \boldsymbol{\rho}^{1,\star}  +  \boldsymbol{\rho}^{2,\star}  \right). &
\\
\end{array}
$$
If  $\boldsymbol{\rho} \in \mathbb{R}^{\mathcal{N}_{\rm q}}$ satisfies the constraints in \eqref{eq:ell1_minimization_aux}, we find that 
$(\boldsymbol{\rho}^{+} = \max\{ \boldsymbol{\rho}, \mathbf{0}  \}, \boldsymbol{\rho}^-
= - \min\{ \boldsymbol{\rho}, \mathbf{0}  \})$ satisfies the constraints in \eqref{eq:ell1_minimization_no_positivity} and
$$
\|  \boldsymbol{\rho} \|_1
=
\mathbf{1}^T  \left( \boldsymbol{\rho}^{+}  +  \boldsymbol{\rho}^{-}  \right)
\geq
\mathbf{1}^T  \left( \boldsymbol{\rho}^{1,\star}  +  \boldsymbol{\rho}^{2,\star}  \right)
=
\|  \boldsymbol{\rho}^{\star} \|_1,
$$
which is the thesis.}

Problems \eqref{eq:ell1_minimization} and \eqref{eq:ell1_minimization_no_positivity} can be solved 
using the dual simplex method. We observe that these problems require  the storage of a dense matrix of size 
$2 K   \times \mathcal{N}_{\rm q}$ and
$2 K   \times 2\mathcal{N}_{\rm q}$, respectively: even in 2D, this might be extremely demanding. 
Note that the linear programming problem \eqref{eq:ell1_minimization_no_positivity} has twice as many unknowns as \eqref{eq:ell1_minimization}.
In section \ref{sec:divide_conquer}, we illustrate a divide-and-conquer approach, which does not require the assembling of the matrix $\mathbb{G}$.


\subsubsection{Quadrature rule using EIM (EIM-EQ) }

A second approach (\cite{antil2013two})
consists in exploiting the EIM Greedy algorithm. Given $\Xi^{\rm train,eq} = \{ \mu^{\ell} \}_{\ell=1}^{n_{\rm train}^{\rm eq}}$ and $\{ \phi_j  \}_{j=1}^{J_{\rm es}}$,  we define 
$\eta^{\ell, j} :=  \eta(  \cdot; \phi_j, \mu^{\ell} )$ for $\ell=1,\ldots, n_{\rm train}^{\rm eq}$ and $j=1,\ldots,J_{\rm es}$.
Then, (i) we resort to a compression strategy to build an approximation space $\mathcal{Z}_{Q_{\rm eq}} = {\rm span } \{  \zeta_q  \}_{q=1}^{Q_{\rm eq}}$ for  
$\{  \eta^{\ell, j} \}_{\ell,j}$, 
(ii) we use the EIM Greedy algorithm to identify a set of quadrature points $\{ x_q^{\rm eq} \}$ based on $ \{  \zeta_q  \}_{q=1}^{Q_{\rm eq}}$, and 
(iii) we construct the quadrature weights.

In \cite{antil2013two}, the authors resort to a strong-Greedy procedure to determine the approximation space $\mathcal{Z}_{Q_{\rm eq}}$; in this work, we resort to POD based on the $L^2(\Omega)$ inner product.
On the other hand, the application of EIM and the subsequent construction of the quadrature points is detailed in Appendix \ref{sec:EIM}.
We remark that this approach does not in general lead to positive weights: as a result, the resulting quadrature rule should be interpreted as an approximation to problem \eqref{eq:benchmark_optimization_no_positivity}.

\subsubsection{Solution to  \eqref{eq:benchmark_optimization} using MIO (MIO-EQ)}
\label{sec:MIO_EQ}
We might also exploit  Mixed Integer Optimization (MIO) algorithms to directly solve \eqref{eq:benchmark_optimization}. With this in mind, we observe that \eqref{eq:benchmark_optimization} can be restated as
\begin{equation}
\label{eq:MIO_opt}
\min_{ 
\boldsymbol{\rho} \in \mathbb{R}^{\mathcal{N}_{\rm q}},
\mathbf{z} \in \{ 0, 1 \}^{\mathcal{N}_{\rm q}}
     } \,
\mathbf{1}^T \mathbf{z}
     \quad
{\rm s.t.} \; \;
\left\{
\begin{array}{l}
\mathbb{A} \boldsymbol{\rho} \leq \mathbf{b} \\
     \mathbf{0} \leq \boldsymbol{\rho} \leq |\Omega| \mathbf{z}
\\       
\end{array}
    \right.
\end{equation}
where $\mathbb{A}, \mathbf{b} $ are defined in \eqref{eq:ell1_minimization}. 

Problem \eqref{eq:MIO_opt} corresponds to a linear mixed integer optimization problem;
it is well-known that finding the optimal solution to \eqref{eq:MIO_opt} is in general a NP-hard problem. However, thanks to recent advances in discrete optimization, nearly-optimal solutions to the problem can be found within a reasonable time-frame.
We refer to \cite{bertsimas2005optimization,bertsimas2016best} for further discussions.
We here rely on the Matlab routine \texttt{intlinprog} to estimate the  solution to \eqref{eq:MIO_opt}.

Direct solution to  \eqref{eq:benchmark_optimization_no_positivity} requires the solution to the linear mixed integer optimization problem\footnote{Given the solution $(\boldsymbol{\rho}^1,\boldsymbol{\rho}^2, \mathbf{z}^1,\mathbf{z}^2)$    to \eqref{eq:MIO_opt_no_positivity}, it is possible to verify that 
$\boldsymbol{\rho}^{\star} = \boldsymbol{\rho}^1 - \boldsymbol{\rho}^2$ solves \eqref{eq:benchmark_optimization_no_positivity}. The proof follows from  the fact that 
$\rho_i^1 \rho_i^2 = z_i^1 z_i^2 = 0$ for $i=1,\ldots,\mathcal{N}_{\rm q}$. We omit the details.}
\begin{equation}
\label{eq:MIO_opt_no_positivity}
\min_{ 
\boldsymbol{\rho}^1,\boldsymbol{\rho}^2 \in \mathbb{R}^{\mathcal{N}_{\rm q}},
\mathbf{z}^1,\mathbf{z}^2  \in \{ 0, 1 \}^{\mathcal{N}_{\rm q}}
     } \,
\mathbf{1}^T ( \mathbf{z}^1 + \mathbf{z}^2)
     \quad
{\rm s.t.} \; \;
\left\{
\begin{array}{l}
\mathbb{A} 
(\boldsymbol{\rho}^1  \, - \, \boldsymbol{\rho}^2)
 \leq \mathbf{b} \\
     \mathbf{0} \leq \boldsymbol{\rho}^1 \leq C  \mathbf{z}^1 \\      
      \mathbf{0} \leq \boldsymbol{\rho}^2 \leq C  \mathbf{z}^2 \\        
\end{array}
    \right.
\end{equation}
where $C$ is chosen to be sufficiently larger than $|\Omega|$.
As for $\ell^1$-EQ, we note that \eqref{eq:MIO_opt_no_positivity} has twice as many unknowns  as \eqref{eq:MIO_opt};
it is thus considerably more difficult to solve.

\subsubsection{A divide-and-conquer approach for $\ell^1$-EQ and MIO-EQ}
\label{sec:divide_conquer}
In order to deal with large-scale problems, we propose a divide-and-conquer approach for $\ell^1$-EQ and MIO-EQ. Towards this end,   we define the triangulation of $\Omega$,
$\mathcal{T}^{\rm hf} = \{  {\texttt{D}}_j \}_{j=1}^{n_{\rm elem}}$ where 
${\texttt{D}}_j$ denotes the $j$-th element of the mesh and $n_{\rm elem}$ denotes the number of elements in the mesh.
Then, we introduce the partition of $\mathcal{T}^{\rm hf}$  as the set of indices
$\mathcal{J}_1,\ldots,\mathcal{J}_{N_{\rm part}} \subset \{ 1,\ldots, n_{\rm elem}\}$ such that
$\bigcup_{\ell=1}^{ N_{\rm part}   } \mathcal{J}_{\ell} = \{1,\ldots,  n_{\rm elem} \}$ and
$\mathcal{J}_{\ell} \cap \mathcal{J}_{\ell'} = \emptyset$ for $\ell \neq \ell'$.
If we assume that all quadrature points lie in the interior of the mesh elements\footnote{This condition is satisfied by standard Finite Element/Spectral Element discretizations.}, we find that  the global quadrature rule $\{ x_i^{\rm hf}, \rho_i^{\rm hf}  \}_{i=1}^{\mathcal{N}_{\rm q}}$ induces local quadrature rules
on the  subdomains $\Omega_{\ell} = \bigcup_{j \in \mathcal{J}_{\ell}}   {\texttt{D}}_j$; we denote by  
$\{   x_{i,\ell}^{\rm hf}, \rho_{i,\ell}^{\rm hf}  \}_{i=1,\ldots,\mathcal{N}_{\rm q}^{(\ell)}}$
the local quadrature points and weights associated with the $\ell$-th subdomain;
we further denote by  $\mathcal{Q}^{\rm hf, (\ell)}(\cdot)$ the high-fidelity quadrature rule on  $\Omega_{\ell}$.

Algorithm \ref{DC_procedure} outlines the divide-and-conquer  computational strategy for
\eqref{eq:ell1_minimization}; similar strategies can be derived for \eqref{eq:ell1_minimization_no_positivity}, \eqref{eq:MIO_opt},  \eqref{eq:MIO_opt_no_positivity}.
We observe that the local problems can be solved in parallel, 
and the full matrix $\mathbb{G}$ is not  assembled during the procedure.
Furthermore, we remark that for large-scale problems it might be convenient to consider recursive divide-and-conquer approaches based on several layers;  the extension is completely standard and is here omitted.
Finally, we remark that, thanks to the choice of the tolerance in \eqref{eq:DC_local}, the admissible set associated with  \eqref{eq:DC_global} is not empty, as rigorously shown in the next Proposition.

\begin{Proposition}
\label{th:well_posedness}
The admissible set associated with problem \eqref{eq:DC_global} is not empty for any choice of $\delta>0$.
\end{Proposition}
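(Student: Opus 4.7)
The plan is to exhibit a concrete element of the admissible set of \eqref{eq:DC_global} by stitching together the local solutions returned by the first stage of the divide-and-conquer algorithm.

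First, I would verify that each local subproblem \eqref{eq:DC_local} is itself feasible. Since $\{ x_{i,\ell}^{\rm hf}, \rho_{i,\ell}^{\rm hf} \}_i$ is the high-fidelity quadrature rule on $\Omega_\ell$, taking $\boldsymbol{\rho} = \boldsymbol{\rho}^{{\rm hf},(\ell)}$ trivially makes every local integral constraint hold with zero residual, hence at most $\delta/N_{\rm part}$. Thus each local problem admits at least one (approximate) solution $\boldsymbol{\rho}^{\star,(\ell)} \in \mathbb{R}^{\mathcal{N}_{\rm q}^{(\ell)}}$ satisfying the local tolerance.

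Next, I would form the candidate global weight vector $\boldsymbol{\rho}^{\rm dc} \in \mathbb{R}^{\mathcal{N}_{\rm q}}$ by distributing the $\boldsymbol{\rho}^{\star,(\ell)}$ on the entries corresponding to the quadrature points inside $\Omega_\ell$ (well-defined thanks to the assumption that quadrature points lie in the interior of mesh elements, so that the partition $\{\mathcal{J}_\ell\}$ induces a partition of the quadrature point indices $\{1,\dots,\mathcal{N}_{\rm q}\}$). Because every row of $\mathbb{G}$ is the evaluation of a high-fidelity integral over $\Omega$, and the integrand is additive over the subdomain partition, each row of $\mathbb{G} \boldsymbol{\rho}^{\rm dc} - \mathbf{y}^{\rm hf}$ decomposes as a sum over $\ell$ of the corresponding local residuals. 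Applying the triangle inequality componentwise gives
\begin{equation*}
\|\mathbb{G}\boldsymbol{\rho}^{\rm dc}-\mathbf{y}^{\rm hf}\|_\infty
\;\le\; \sum_{\ell=1}^{N_{\rm part}}
\|\mathbb{G}^{(\ell)}\boldsymbol{\rho}^{\star,(\ell)}-\mathbf{y}^{{\rm hf},(\ell)}\|_\infty
\;\le\; N_{\rm part}\cdot \frac{\delta}{N_{\rm part}} \;=\; \delta,
\end{equation*}
where $\delta/N_{\rm part}$ is precisely the local tolerance prescribed in \eqref{eq:DC_local}. Non-negativity of $\boldsymbol{\rho}^{\rm dc}$ (when this constraint is in force) is inherited entry-wise from the $\boldsymbol{\rho}^{\star,(\ell)}$, and its support is by construction contained in the union of the supports of the local solutions, which is the search set of \eqref{eq:DC_global}. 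Hence $\boldsymbol{\rho}^{\rm dc}$ is admissible, proving the claim.

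The argument is essentially a triangle-inequality bookkeeping exercise; the only subtlety, and the place where I would be most careful, is checking that the global rows of $\mathbb{G}$ split cleanly as sums of local rows (i.e.\ that quadrature points are never shared across subdomains and that the right-hand sides $\mathbf{y}^{\rm hf}$ likewise decompose additively). Both are guaranteed by the interior-point assumption on quadrature nodes together with the disjointness of the $\mathcal{J}_\ell$, so no real obstacle arises.
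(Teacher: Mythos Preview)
Your proposal is correct and follows essentially the same approach as the paper: feasibility of each local problem via the high-fidelity weights, assembly of the local solutions into a global vector, and the triangle-inequality bound $\sum_\ell \delta/N_{\rm part}=\delta$ for the global residual. You are in fact slightly more explicit than the paper in checking the support constraint $\rho_i=0$ for $i\notin\mathcal{I}_{\rm loc}$, which the paper's proof leaves implicit; the only point the paper adds that you skip is a Weierstrass argument for \emph{existence} of a local minimizer (needed so that $\mathcal{I}_{\rm loc}$ is well-defined), but this is immediate for a linear program with non-empty feasible set and objective bounded below.
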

\begin{proof}
Since 
$\boldsymbol{\rho}^{(\ell)} = \boldsymbol{\rho}^{\rm hf, (\ell)}$ is   admissible for \eqref{eq:DC_local},   the admissible set associated with \eqref{eq:DC_local} is not empty.
Furthermore, 
any solution $\boldsymbol{\rho}^{\star, (\ell)}$ to \eqref{eq:DC_local} is uniformly bounded: we have indeed
$\| \boldsymbol{\rho}^{\star, (\ell)} \|_1 \leq 
\|  \boldsymbol{\rho}^{\rm hf, (\ell)}  \|_1=:C$.
Then, since the set 
$$
\left\{
\boldsymbol{\rho} \, :  \,
\|  \mathbb{G}^{(\ell)}  \boldsymbol{\rho} -  \mathbf{y}^{\rm hf,(\ell)}   \|_{\infty} \leq 
\frac{\delta}{N_{\rm part}}, \; \;
 \boldsymbol{\rho}  \geq \mathbf{0} , \; \;
 \|  \boldsymbol{\rho}  \|_1 \leq C 
\right\}
$$
is compact and $\boldsymbol{\rho}  \mapsto \| \boldsymbol{\rho} \|_1$ is continuous, the existence of a solution to \eqref{eq:DC_local} follows from the Weierstrass theorem.

Let $\boldsymbol{\rho}^{(\ell)} \in \mathbb{R}^{\mathcal{N}_{\rm q}^{(\ell)}}$ be a solution  to \eqref{eq:DC_local} for $\ell=1,\ldots, N_{\rm part}$, and let 
$\mathcal{I}_{(\ell)} \subset \{1,\ldots, \mathcal{N}_{\rm q}\}$ be the indices associated with the quadrature points in $\Omega_{(\ell)}$.
We define $\boldsymbol{\rho}^{\star} \in \mathbb{R}^{\mathcal{N}_{\rm q}}$ such that
$\boldsymbol{\rho}^{\star}(  \mathcal{I}_{(\ell)}  ) = \boldsymbol{\rho}^{(\ell)}$  for  $\ell=1,\ldots, N_{\rm part}$.

Clearly, we have  $\boldsymbol{\rho}^{\star} \geq \mathbf{0}$. Furthermore, we find
$$
\|  \mathbb{G} \boldsymbol{\rho}^{\star}   - \mathbf{y}^{\rm hf}  \|_{\infty}
=
\|
\sum_{\ell=1}^{  N_{\rm part}  } \left(
 \mathbb{G}^{(\ell)} \boldsymbol{\rho}^{(\ell)}   - \mathbf{y}^{\rm hf, (\ell)}
\right)
\|_{\infty}
\leq
N_{\rm part}
\frac{\delta}{N_{\rm part}} \leq \delta.
$$
This implies that $\boldsymbol{\rho}^{\star}$ is an admissible solution to \eqref{eq:DC_global}. 
\end{proof}

\begin{algorithm}[H]                      
\caption{
Divide-and-conquer approach for EQ training}     
\label{DC_procedure}                           
\textbf{Divide}
\begin{algorithmic}[1]
\State
Define $
\mathcal{J}_{1}, \ldots, \mathcal{J}_{N_{\rm part}} \subset \{  1,\ldots, n_{\rm elem} \}$
\medskip

\State
Compute 
the local quadrature rules on $\Omega_1,\ldots,\Omega_{N_{\rm part}}$ by solving
\begin{subequations}
\label{eq:DC_local}
\begin{equation}
\min_{\boldsymbol{\rho} \in \mathbb{R}^{\mathcal{N}_{\rm q}^{(\ell)}}   }
\,
\|  \boldsymbol{\rho}  \|_1,
\quad
{\rm s.t.} \; 
\left\{
\begin{array}{l}
\|  \mathbb{G}^{(\ell)}  \boldsymbol{\rho} -  \mathbf{y}^{\rm hf,(\ell)}   \|_{\infty} \leq 
\frac{\delta}{N_{\rm part}}
 \\
 \boldsymbol{\rho} \geq \mathbf{0} \\
\end{array}
\right.
\end{equation}
where 
\begin{equation}
\mathbb{G}^{(\ell)} =
\left[
\begin{array}{ccc}
\eta(x_{1,\ell}^{\rm hf}; \phi_1, \mu^1 ), & \ldots &  
\eta(x_{\mathcal{N}_{\rm q}^{(\ell)}}^{\rm hf}; \phi_1, \mu^1 ) \\ 
& \vdots & \\
\eta(x_{1, \ell}^{\rm hf}; \phi_{J_{\rm es}}, \mu^{n_{\rm train}^{\rm eq}} ), & \ldots &  
\eta(x_{\mathcal{N}_{\rm q}^{(\ell)}}^{\rm hf}; \phi_{J_{\rm es}},, \mu^{n_{\rm train}^{\rm eq}} ) 
\\[2mm]
1 & \ldots & 1 \\
\end{array}
\right],
\end{equation}
and 
\begin{equation}
\mathbf{y}^{\rm hf,(\ell)} = \left[
\mathcal{Q}^{\rm hf,(\ell)} \left( \eta(\cdot; \phi_1, \mu^1 )  \right),  \ldots  \, , \,
\mathcal{Q}^{\rm hf,(\ell)} \left( \eta(\cdot; \phi_{J_{\rm es}}, \mu^{n_{\rm train}^{\rm eq}} )  \right) , 
\mathcal{Q}^{\rm hf,(\ell)} \left( 1   \right)    \right]
\end{equation}
\end{subequations}
\medskip

\State
Define the set of  indices  $\mathcal{I}_{(\ell), \rm loc} \subset \{1,\ldots, \mathcal{N}_{\rm q} \}$ associated with the nonzero elements of the optimal solutions $\boldsymbol{\rho}^{(\ell)}$  to \eqref{eq:DC_local}, and set 
$\mathcal{I}_{\rm loc} = \bigcup_{\ell}  \mathcal{I}_{(\ell), \rm loc}$.
\end{algorithmic}

\textbf{Conquer}
\begin{algorithmic}[1]
\State
 Solve
\begin{equation}
\label{eq:DC_global}
\min_{\boldsymbol{\rho} \in \mathbb{R}^{\mathcal{N}_{\rm q}}   }
\,
\|  \boldsymbol{\rho}  \|_1,
\quad
{\rm s.t.} \; 
\left\{
\begin{array}{l}
\|  {\mathbb{G}}  \boldsymbol{\rho} -  {\mathbf{y}}^{\rm hf}   \|_{\infty} \leq \delta \\
  \boldsymbol{\rho}   \geq 0, \quad
  \rho_i = 0 \; {\rm if} \, i \notin \mathcal{I}_{\rm loc}  \\
  \end{array}
\right.
\end{equation}
where ${\mathbb{G}},   {\mathbf{y}}^{\rm hf} $ are defined in \eqref{eq:benchmark_optimization}.
\end{algorithmic}
\end{algorithm}

\subsection{Summary of the EQ+ES offline/online procedure}
 \label{sec:full_procedure} 

 Algorithm \ref{full_procedure} summarizes the offline/online computational procedure. 
 {  As regards the offline cost of the ES procedure, computation of the  Riesz  elements  scales with 
  $\mathcal{O}(n_{\rm train}^{\rm es}  C_{\rm riesz} )$,  while the cost of computing the POD space --- provided that $n_{\rm train}^{\rm es} \ll \mathcal{N}$ --- scales with $\mathcal{O}(    (n_{\rm train}^{\rm es})^2 \mathcal{N}    )$. Offline memory cost is $\mathcal{O}(  n_{\rm train}^{\rm es}  \mathcal{N}    )$: note that the cost of POD can be significantly reduced by resorting to hierarchical (\cite{himpe2016hierarchical})
or stochastic (\cite{balabanov2018randomized}) approaches. 
As regards the offline cost of the EQ procedure, 
memory cost  of the three EQ strategies discussed above
is  $\mathcal{O}( n_{\rm train}^{\rm eq} \, J_{\rm es} \mathcal{N}_{\rm q} )$:
as $n_{\rm train}^{\rm eq} \, J_{\rm es} \mathcal{N}_{\rm q} $ increases, offline memory costs become prohibitive.
For EIM-EQ, memory costs --- which are associated with the application of POD --- can be  reduced by resorting to hierarchical or stochastic strategies (see in particular the approach in \cite{antil2013two});
on the other hand, we might resort to  the divide-and-conquer approach discussed  in section  \ref{sec:divide_conquer}  to reduce the costs of   $\ell^1$-EQ  and MIO-EQ.  
We are not able to provide general estimates for the offline computational costs associated with  the algorithms in sections \ref{sec:ell1_EQ} and \ref{sec:MIO_EQ}: in section \ref{sec:numerics},  we provide results for the model problems considered. Finally, we observe that storage of $\{ F(x_q^{\rm eq}; \phi_j) \}_{q,j}$ requires the storage of  $\mathcal{C}_{\rm on}= D J_{\rm es} Q_{\rm eq}$ floating points; similarly, computation of $\mathbb{H}(\mu)$ in \eqref{eq:EQ_ES_estimate} can be performed through $\mathcal{O}(\mathcal{C}_{\rm on})$ operations. }

\begin{algorithm}[H]                      
\caption{
Offline/online procedure for dual norm calculations}     
\label{full_procedure}                           
\textbf{Offline stage}
\begin{algorithmic}[1]
\State
Sample $\mu^1,\ldots,\mu^{n_{\rm train}^{\rm es}} \overset{\rm iid}{\sim} {\rm Uniform}(\mathcal{P})$,
and compute $\{  \xi^{\ell} = \xi_{\mu^{\ell}}    \}_{\ell=1}^{n_{\rm train}^{\rm es}}$ 
\medskip

\State
Compute 
$\mathcal{X}_{{J_{\rm es}}} = {\rm span} \{ \phi_j \}_{j=1}^{J_{\rm es}}$ using 
POD.
\medskip

\State
Compute the  quadrature rule
$\{  \rho_q^{\rm eq}, x_q^{\rm eq} \}_{q=1}^{Q_{\rm eq}}$
using $\ell^1$-EQ, EIM-EQ or  MIO-EQ (cf. section \ref{sec:empirical_quadrature}).
\medskip

\State
Store the evaluations  of $\{ F(\cdot;  \phi_j ) \}_j$  in   $\{ x_q^{\rm eq} \}_{q=1}^{Q_{\rm eq}}$.
 \end{algorithmic}

\textbf{Online stage}

\begin{algorithmic}[1]
\State
Compute the matrix $( \mathbb{H}(\mu) )_{q,j} = \eta(x_q^{\rm eq}; \phi_j, \mu)$ in \eqref{eq:EQ_ES_estimate}.
\medskip

\State
Compute $L_{{J_{\rm es}},{Q_{\rm eq}}}(\mu)$ using  
\eqref{eq:EQ_ES_estimate}.
\end{algorithmic}
\end{algorithm}

\subsection{\emph{A priori} error analysis}
\label{sec:a_priori_error}
Given the quadrature rule $\{ x_q^{\rm eq}, \rho_q^{\rm eq}  \}_{q=1}^{Q_{\rm eq}}$, we define the maximum quadrature error:
\begin{equation}
\label{eq:maximum_quadrature_error}
\delta_{Q_{\rm eq}}^{\rm eq} := \max_{\mu \in \mathcal{P},  j=1,\ldots,J_{\rm es}} \,
\left|
\mathcal{Q}^{\rm eq}(  \eta(\cdot; \phi_j, \mu)  ) \, - \,
\mathcal{Q}^{\rm hf}(  \eta(\cdot; \phi_j, \mu)  ) 
\right|
\end{equation}
For the $\ell^1$-EQ and MIO-EQ procedures presented in section \ref{sec:empirical_quadrature},  
the maximum quadrature error $\delta_{Q_{\rm eq}}^{\rm eq}$ is  enforced to be below the target tolerance $\delta$  for all parameters in the training set  $\Xi^{\rm train,eq} = \{  \mu^{\ell} \}_{\ell=1}^{n_{\rm train}^{\rm eq}}$.
{ Note that for 
$\mu \in \mathcal{P} \setminus \Xi^{\rm train,eq} $
the quadrature error $\delta_{Q_{\rm eq}}^{\rm eq}$  might  exceed $\delta$;}
however, we can exploit \cite[Lemma 2.2]{patera2017lp} to conclude that 
$\lim_{   n_{\rm train}^{\rm eq} \to \infty  }\delta_{Q_{\rm eq}}^{\rm eq}  \leq  \delta$,  provided that $\Upsilon$ is Lipschitz-continuous in $\mu$.
Furthermore, given the reduced space $\mathcal{X}_{J_{\rm es}}$, we define the discretization error 
\begin{equation}
\label{eq:discretization_error}
\epsilon_{J_{\rm es}}^{\rm discr}
=
\max_{\mu \in \mathcal{P}}
\,
\|   \Pi_{  \mathcal{X}_{J_{\rm es}}^{\perp}  } \xi_{\mu}      \|_{\mathcal{X}}.
\end{equation}
We observe that  $\epsilon_{J_{\rm es}}^{\rm discr}$ can be estimated using the error
 indicator $E_{J_{\rm es},n_{\rm train}, n_{\rm test}}^{(\infty)}$
defined in \eqref{eq:error_indicator_max}.

Proposition \ref{th:a_priori} shows the  \emph{a priori} error bound for the estimation error
$\big| L_{J_{\rm es},Q_{\rm eq}}(\mu) - L(\mu) \big|$. 
We observe that the overall error depends on the sum of the quadrature error  $\delta_{Q_{\rm eq}}^{\rm eq}$  and  of the discretization error $\epsilon_{J_{\rm es}}^{\rm discr}$.

\begin{Proposition}
\label{th:a_priori}
Given the quadrature rule $\{ x_q^{\rm eq}, \rho_q^{\rm eq}  \}_{q=1}^{Q_{\rm eq}}$, and the empirical test space $\mathcal{X}_{J_{\rm es}}$,
the following bound holds for any $\mu \in \mathcal{P}$:
\begin{equation}
\label{eq:a_priori_error}
\big|
L_{J_{\rm es},Q_{\rm eq}}(\mu)
- L(\mu) 
\big|
\leq
\sqrt{J_{\rm es}} \delta_{Q_{\rm eq}}^{\rm eq} \, 
+
\frac{    \left( \epsilon_{J_{\rm es}}^{\rm discr}  \right)^2  }{ L(\mu) +  L_{J_{\rm es}}(\mu)},
\end{equation}
where $L_{J_{\rm es}}(\mu) :=  \|  \mathcal{L}_{\mu}  \|_{\mathcal{X}_{J_{\rm es}}'}$.
\end{Proposition}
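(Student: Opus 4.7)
The plan is to split the total error via the triangle inequality, introducing the intermediate quantity $L_{J_{\rm es}}(\mu) = \|\mathcal{L}_\mu\|_{\mathcal{X}_{J_{\rm es}}'}$:
\[
\big|L_{J_{\rm es},Q_{\rm eq}}(\mu) - L(\mu)\big| \;\leq\; \big|L_{J_{\rm es},Q_{\rm eq}}(\mu) - L_{J_{\rm es}}(\mu)\big| \;+\; \big|L_{J_{\rm es}}(\mu) - L(\mu)\big|.
\]
The first summand isolates the quadrature error (same test space, different quadrature rule), while the second isolates the discretization error (exact integration, different test space).

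For the first summand, I would use the explicit formula from \eqref{eq:EQ_ES_estimate} together with the analogous identity for $L_{J_{\rm es}}$:
\[
L_{J_{\rm es},Q_{\rm eq}}(\mu)^2 = \sum_{j=1}^{J_{\rm es}} \big(\mathcal{Q}^{\rm eq}(\eta(\cdot;\phi_j,\mu))\big)^2, \qquad L_{J_{\rm es}}(\mu)^2 = \sum_{j=1}^{J_{\rm es}} \big(\mathcal{Q}^{\rm hf}(\eta(\cdot;\phi_j,\mu))\big)^2,
\]
the latter following because $\{\phi_j\}$ is $\mathcal{X}$-orthonormal. Applying the reverse triangle inequality for the Euclidean norm and then invoking the definition \eqref{eq:maximum_quadrature_error} of $\delta_{Q_{\rm eq}}^{\rm eq}$ entry-wise yields
\[
\big|L_{J_{\rm es},Q_{\rm eq}}(\mu) - L_{J_{\rm es}}(\mu)\big| \;\leq\; \Big(\sum_{j=1}^{J_{\rm es}} (\delta_{Q_{\rm eq}}^{\rm eq})^2\Big)^{1/2} \;=\; \sqrt{J_{\rm es}}\,\delta_{Q_{\rm eq}}^{\rm eq},
\]
which is the first contribution in \eqref{eq:a_priori_error}.

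For the second summand, I would rely on the Riesz representation. Writing $\xi_\mu = R_\mathcal{X}\mathcal{L}_\mu$, one has $L(\mu) = \|\xi_\mu\|_\mathcal{X}$. The restriction of $\mathcal{L}_\mu$ to $\mathcal{X}_{J_{\rm es}}$ has Riesz representative $\Pi_{\mathcal{X}_{J_{\rm es}}}\xi_\mu$, since for $v\in\mathcal{X}_{J_{\rm es}}$, $\mathcal{L}_\mu(v) = (\xi_\mu,v)_\mathcal{X} = (\Pi_{\mathcal{X}_{J_{\rm es}}}\xi_\mu,v)_\mathcal{X}$; consequently $L_{J_{\rm es}}(\mu) = \|\Pi_{\mathcal{X}_{J_{\rm es}}}\xi_\mu\|_\mathcal{X}$. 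The Pythagorean identity then gives
\[
L(\mu)^2 - L_{J_{\rm es}}(\mu)^2 \;=\; \|\Pi_{\mathcal{X}_{J_{\rm es}}^\perp}\xi_\mu\|_\mathcal{X}^2 \;\leq\; (\epsilon_{J_{\rm es}}^{\rm discr})^2,
\]
by \eqref{eq:discretization_error}. Observing that $L(\mu)\geq L_{J_{\rm es}}(\mu)\geq 0$ (the supremum is taken over a larger set on the left), I would factor the difference of squares and divide by $L(\mu)+L_{J_{\rm es}}(\mu)$ to obtain the second contribution in \eqref{eq:a_priori_error}.

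I do not anticipate a significant obstacle: the result is essentially a two-step exercise combining the reverse triangle inequality in $\mathbb{R}^{J_{\rm es}}$ with the Pythagorean identity in $\mathcal{X}$. The only point that deserves care is the identification $L_{J_{\rm es}}(\mu) = \|\Pi_{\mathcal{X}_{J_{\rm es}}}\xi_\mu\|_\mathcal{X}$, which must be stated explicitly so that the projection-based bound on the discretization term is justified; everything else reduces to a direct computation using the quantities already introduced in the excerpt.
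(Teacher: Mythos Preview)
Your proposal is correct and follows essentially the same approach as the paper's proof: triangle inequality to split into a quadrature term and a discretization term, reverse triangle inequality in $\mathbb{R}^{J_{\rm es}}$ for the former, and Riesz representation plus the Pythagorean identity together with the factorization $a^2-b^2=(a-b)(a+b)$ for the latter. The only cosmetic difference is the order in which the two contributions are treated.
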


\begin{proof}
Recalling the Riesz representer theorem, we have that
$\mathcal{L}_{\mu}(v) = ( \xi_{\mu}, v)_{\mathcal{X}}$ for all $v \in \mathcal{X}$; as a result,
$$
\left( L_{J_{\rm es}}(\mu) \right)^2 = 
\|   \mathcal{L}_{\mu}  \|_{\mathcal{X}_{J_{\rm es}}'}^2
=
\sup_{v \in \mathcal{X}_{J_{\rm es}}} \,
\frac{
( \xi_{\mu}, v  )_{\mathcal{X}}^2
}{\| v \|_{\mathcal{X}}^2  }
\,
=
\,
\| 
\Pi_{\mathcal{X}_{J_{\rm es}}} \xi_{\mu}  \|_{\mathcal{X}}^2
=
\left( L(\mu) \right)^2
-
\| \Pi_{\mathcal{X}_{J_{\rm es}}^{\perp}   } \xi_{\mu}  \|_{\mathcal{X}}^2,
$$
where in the last equality we used the projection theorem. Exploiting the identity
$(a-b)(a+b) = a^2-b^2$ we find
\begin{equation}
\label{eq:aux_estimate_proof}
 L(\mu)  
-
 L_{J_{\rm es}}(\mu)  
=
\frac{\|   \Pi_{  \mathcal{X}_{J_{\rm es}}^{\perp}  } \xi_{\mu}      \|_{\mathcal{X}}^2   }{
 L(\mu)  +  L_{J_{\rm es}}(\mu)  
} .
\end{equation}
On the other hand, exploiting  inverse   triangle   inequality and the definition of 
$\delta_{Q_{\rm eq}}^{\rm eq}$, we find
\begin{equation}
\label{eq:aux_estimate_proof_2}
\begin{array}{rl}
\displaystyle{
\big|  L_{J_{\rm es}}(\mu) -  L_{J_{\rm es},Q_{\rm eq}}(\mu) \big|
=}
&
\displaystyle{
\big| 
\sqrt{ \sum_j \,  \left( \mathcal{Q}^{\rm eq}(  \eta(\cdot; \phi_j, \mu)  ) \right)^2} 
\, - \,
\sqrt{ \sum_j \,  \left( \mathcal{Q}^{\rm hf}(  \eta(\cdot; \phi_j, \mu)  ) \right)^2} 
\big| 
} \\[3mm]
\leq
&
\displaystyle{
\sqrt{ \sum_j \,  \left( \mathcal{Q}^{\rm eq}(  \eta(\cdot; \phi_j, \mu)   \, - \,
                                   \mathcal{Q}^{\rm hf}(  \eta(\cdot; \phi_j, \mu)  ) \right)^2} 
} \\[3mm]
\leq
&
\displaystyle{
\sqrt{
\sum_j \left( \delta_{Q_{\rm eq}}^{\rm eq}  \right)^2
}
=
\sqrt{ J_{\rm es}} \, \delta_{Q_{\rm eq}}^{\rm eq}.
}
\\
\end{array}
\end{equation}
Thesis  follows by observing that 
$
\big|  L(\mu) -  L_{J_{\rm es},Q_{\rm eq}}(\mu) \big|
\leq
\big|  L_{J_{\rm es}}(\mu) -  L(\mu) \big| +
\big|  L_{J_{\rm es}}(\mu) -  L_{J_{\rm es},Q_{\rm eq}}(\mu) \big|$ and then using   \eqref{eq:aux_estimate_proof} and \eqref{eq:aux_estimate_proof_2}.
\qed
\end{proof}

\section{Approximation-then-integration approaches for dual-norm calculations}
\label{sec:theoretical_comparison_ITI_vs_EQ_ES}
We illustrate below how to apply Approximation-Then-Integration (ATI) approaches to dual norm calculations. The aim of this section is to illustrate the key differences between the EQ+ES method presented in section \ref{sec:methodology} and ATI state-of-the-art techniques, and to provide insights about the potential benefits and drawbacks of the proposed method. 

\subsection{Review of ATI-based approaches for dual norm calculations}
\label{sec:ITI_benchmark}

We briefly recall  the standard ATI-based procedure for dual-norm calculations.
We state upfront that our objective is to provide a representative example of ATI approach that will be compared with the EQ+ES approach proposed in this paper;
a thorough discussion of the available ATI approaches for the problem at hand is beyond the scope of this work.
Given $\mathcal{L}_{\mu}$ in \eqref{eq:functional_definition},   an  interpolation/approximation approach (e.g., Gappy POD, EIM,...) 
 is employed
to obtain a surrogate  of $\Upsilon$,
\begin{subequations}
\begin{equation}
\label{eq:psi_phi_surrogates}
\Upsilon_{M,\mu}(x)
=
\sum_{m=1}^{M} \, 
\left( \boldsymbol{\Theta}_M(\mu) \right)_m
 \, \zeta_m(x),
\end{equation}
where 
$ \boldsymbol{\Theta}_M: \mathcal{P} \to \mathbb{R}^M$ is a 
given function of the parameters, which can be computed in $\mathcal{O}(M^2)$ operations;
then,  the parametrically-affine surrogate of $\mathcal{L}_{\mu}$  is defined as
\begin{equation}
\label{eq:surrogate_calLM}
\mathcal{L}_{M,\mu}(v) = 
\sum_{m=1}^M \, \left( \boldsymbol{\Theta}_M(\mu) \right)_m \, \mathcal{L}_m(v), 
\end{equation}
where 
 $\mathcal{L}_m(v) = \int_{\Omega} \, \zeta_m(x)  \cdot F(x;  v) \, dx$ for $m=1,\ldots,M$.
 \end{subequations}
  Since the Riesz operator is linear, we have that
  \begin{equation}
  \label{eq:integration_then_interpolation_estimate}
  \left( L_M(\mu) \right)^2 :=
  \|  \mathcal{L}_{M,\mu} \|_{\mathcal{X}'}^2
  =
  \sum_{m,m'=1}^M
  \,
 \left( \boldsymbol{\Theta}_M(\mu) \right)_m
 \,
 \left( \boldsymbol{\Theta}_M(\mu) \right)_{m'}
 \,
\mathbb{A}_{m,m'}^{\rm off},
  \end{equation}
  where 
 $ \mathbb{A}_{m,m'}^{\rm off} := \left( \xi^m, \,
\xi^{m'}   \right)_{\mathcal{X}}$ 
and $\xi^m =  R_{\mathcal{X}} \mathcal{L}_m$, $m=1,\ldots,M$.

Identity \eqref{eq:integration_then_interpolation_estimate}  allows an efficient  offline/online computational decomposition for the estimation of $L(\mu)$.
\begin{itemize}
\item
\emph{Offline stage:} (performed once)
\begin{enumerate}
\item
find the surrogate $\mathcal{L}_{M,\mu}$ in \eqref{eq:surrogate_calLM},
\item
compute $\xi^m= R_{\mathcal{X}} \mathcal{L}_m$ for $m=1,\ldots,M$, and
\item
compute $\mathbb{A}^{\rm off} \in \mathbb{R}^{M \times M}$ in \eqref{eq:integration_then_interpolation_estimate}.
\end{enumerate}
\item
\emph{Online stage:} (performed for any new $\mu \in \mathcal{P}$)
\begin{enumerate}
\item
 evaluate ${\boldsymbol{\Theta}}_M(\mu)$, 
\item
return
$L_M(\mu)
=
\sqrt{ {\boldsymbol{\Theta}}_M(\mu)^T
\,  {\mathbb{A}}^{\rm off} 
\, {\boldsymbol{\Theta}}_M(\mu)
}.
$
\end{enumerate}
\end{itemize}
We conclude this section by stating an \emph{a priori} result and two remarks.

\begin{Proposition}
\label{th:a_priori_EIM}
Let $C_F:= \sup_{v \in \mathcal{X}} \, \frac{\int_{\Omega} \, \|  F(x; v)  \|_2^2 \, dx }{\| v \|_{\mathcal{X}}}$. Then,
$$
\big|
L(\mu)  \, - \, L_M(\mu)
\big|
\leq
C_F \, \| \Upsilon_{\mu} -  \Upsilon_{M,\mu} \|_{L^2(\Omega)}.
$$
\end{Proposition}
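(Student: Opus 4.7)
The plan is to exploit linearity of the dual norm along with a standard Cauchy--Schwarz step in $L^2(\Omega)$. The starting point is the observation that $\mathcal{L}_\mu$ and $\mathcal{L}_{M,\mu}$ are both obtained by pairing $F(\cdot; v)$ against a vector field ($\Upsilon_\mu$ or $\Upsilon_{M,\mu}$, respectively), so their difference is itself a functional of the same structural form, driven by $\Upsilon_\mu - \Upsilon_{M,\mu}$.

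First I would apply the reverse triangle inequality for the $\mathcal{X}'$-norm to pass from a difference of norms to a norm of a difference:
\[
\big| L(\mu) - L_M(\mu) \big|
\,=\,
\big|\, \|\mathcal{L}_\mu\|_{\mathcal{X}'} - \|\mathcal{L}_{M,\mu}\|_{\mathcal{X}'} \,\big|
\,\leq\,
\|\mathcal{L}_\mu - \mathcal{L}_{M,\mu}\|_{\mathcal{X}'}.
\]
Next, for any $v \in \mathcal{X}$ I would rewrite the numerator of the dual norm using \eqref{eq:functional_definition} and \eqref{eq:psi_phi_surrogates}:
\[
(\mathcal{L}_\mu - \mathcal{L}_{M,\mu})(v)
\,=\,
\int_\Omega \bigl( \Upsilon_\mu(x) - \Upsilon_{M,\mu}(x) \bigr) \cdot F(x;v)\, dx.
\]

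Then a componentwise Cauchy--Schwarz in $L^2(\Omega)$, followed by the elementary inequality $\sum_i a_i b_i \leq (\sum_i a_i^2)^{1/2}(\sum_i b_i^2)^{1/2}$, yields
\[
\big| (\mathcal{L}_\mu - \mathcal{L}_{M,\mu})(v) \big|
\,\leq\,
\|\Upsilon_\mu - \Upsilon_{M,\mu}\|_{L^2(\Omega)}
\left( \int_\Omega \|F(x;v)\|_2^2\, dx \right)^{1/2}.
\]
Dividing by $\|v\|_{\mathcal{X}}$ and invoking the definition of $C_F$ to absorb the $F$-dependent factor, followed by taking the supremum over $v\in\mathcal{X}$, would conclude the argument.

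There is no genuine obstacle: the statement is essentially a Lipschitz continuity property of the map $\Upsilon \mapsto \|\mathcal{L}(\Upsilon)\|_{\mathcal{X}'}$. The only subtlety I would double-check is the precise homogeneity in the definition of $C_F$ (the ratio $\int \|F(\cdot;v)\|_2^2 / \|v\|_{\mathcal{X}}$ is not scale-invariant, so a careful reading is required: either the intended denominator is $\|v\|_{\mathcal{X}}^2$, or the bound should carry a $\sqrt{C_F}$). Modulo this clarification, the argument is a three-line chain: reverse triangle inequality, integral representation of the residual functional, and Cauchy--Schwarz.
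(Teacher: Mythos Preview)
Your proposal is correct and follows exactly the same route as the paper's proof: reverse (inverse) triangle inequality, integral representation of $\mathcal{L}_\mu - \mathcal{L}_{M,\mu}$, then Cauchy--Schwarz in $L^2(\Omega)$. Your closing remark about the homogeneity of $C_F$ is also well-taken---the paper's stated definition has the same defect, and the bound as written really uses $\sup_v (\int_\Omega \|F(\cdot;v)\|_2^2)^{1/2}/\|v\|_\mathcal{X}$.
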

\begin{proof}
Applying the inverse triangle inequality and Cauchy-Schwarz inequality, we find
$$
\begin{array}{rl}
\displaystyle{
\big|
L(\mu)  \, - \, L_M(\mu)
\big|
\leq
} &
\displaystyle{
\| \mathcal{L}_{\mu} - \mathcal{L}_{M,\mu}  \|_{\mathcal{X}'} \,
=
\sup_{v \in \mathcal{X}} \,
\frac{\int_{\Omega} (\Upsilon_{\mu} - \Upsilon_{M,\mu}) \cdot F(v)  \, dx }{\|  v \|_{\mathcal{X}}} 
}
\\[3mm]
\leq
&
\displaystyle{
C_F \, \| \Upsilon_{\mu} -  \Upsilon_{M,\mu} \|_{L^2(\Omega)},
}
\end{array}
$$
which is the thesis.
\end{proof}

\begin{remark}
\label{remark_cost_ATI}
\textbf{Computational cost.}
The offline cost of a typical ATI procedure 
--- such as the one employed in the numerical results and detailed in Appendix \ref{sec:EIM} --- 
 scales with $\mathcal{O}(M C_{\rm riesz}  + M^2 \mathcal{N})$ plus the cost of defining the surrogate of $\Upsilon_{\mu}$. If we resort to POD (as in Appendix \ref{sec:EIM}), given $\{  \Upsilon^k \}_{k=1}^{n_{\rm train}}$, this cost scales with $\mathcal{O}(n_{\rm train}^2 \mathcal{N})$, provided that $n_{\rm train} \ll \mathcal{N}$. Note that our cost estimate does not include the cost of generating the snapshots $\{  \Upsilon^k = \Upsilon_{\mu^k} \}_{k=1}^{n_{\rm train}}$.
On the other hand, the online cost scales with $\mathcal{O}(M^2)$.
\end{remark}

\begin{remark}
\label{remark_ATI_ES}
\textbf{ATI+ES.}
Given the surrogate $\mathcal{L}_{M,\mu}$ in \eqref{eq:surrogate_calLM}, we might consider the approximation
$$
L_{J_{\rm es}, M}(\mu) 
\, = \, 
 \sup_{v \in \mathcal{X}_{J_{\rm es}}} \,
\frac{\mathcal{L}_{M,\mu}(v)}{\| v \|_{\mathcal{X}}}
\, = \,
\mathbb{H}^{\rm ati}(\mu) \, \boldsymbol{\Theta}_M(\mu),
$$
where $\left( \mathbb{H}^{\rm ati}(\mu) \right)_{j,m} = \mathcal{L}_m(\phi_j)$. Here, the space $\mathcal{X}_{J_{\rm es}}$ should be designed to approximate the manifold $\mathcal{M}_{\mathcal{L},M}:=
\{ R_{\mathcal{X}} \mathcal{L}_{M,\mu}: \mu \in \mathcal{P}  \}$.
Note that if we choose $\mathcal{X}_{J_{\rm es}=M} = {\rm span}\{
 R_{\mathcal{X}} \mathcal{L}_m
\}_{m=1}^M$, we have $L_{J_{\rm es}, M}(\mu)  = L_{M}(\mu) $.
In  section \ref{sec:numerics}, we investigate whether it is beneficial to consider $J_{\rm es} < M$.
\end{remark}

\subsection{Discussion}

The construction of the affine surrogate of $\Upsilon$ in \eqref{eq:psi_phi_surrogates} involves (i) the definition of an approximation space $\mathcal{Z}_M = {\rm span} \{  \zeta_m \}_{m=1}^M \subset [L^2(\Omega)]^D$, and
(ii) the definition of an interpolation/approximation procedure to efficiently compute the parameter-dependent coefficients $\boldsymbol{\Theta}_M(\mu)$ such that
$\|  \Upsilon_{\mu} - \Upsilon_{M,\mu}  \|_{L^2(\Omega)}
\approx \inf_{\zeta \in \mathcal{Z}_M} \| \Upsilon_{\mu}  - \zeta\|_{L^2(\Omega)}$.
\begin{itemize}
\item
As opposed to the  EQ+ES approach  where the estimation error  is the sum of two contributions associated with two subsequent approximations,
the only source of error in 
$\big|
L(\mu) - L_M(\mu)
\big|$ 
is the substitution
$\mathcal{L}_{\mu} \to  \mathcal{L}_{M,\mu}$.
\item
While the empirical test space $\mathcal{X}_{J_{\rm es}}$ should approximate the manifold of Riesz representers
$\mathcal{M}_{\mathcal{L}} := \{ \xi_{\mu}: \mu \in \mathcal{P}\} \subset \mathcal{X}$,
the space $\mathcal{Z}_M$ should be tailored to approximate the manifold $\{ \Upsilon_{\mu}: \mu \in \mathcal{P}  \} \subset [L^2(\Omega)]^D$;
 therefore, for $\mathcal{X} \neq L^2(\Omega)$, we do not expect the spaces $\mathcal{Z}_M$ and $\mathcal{X}_{J_{\rm es}}$ to be related.
\item
Although small approximation errors in $\Upsilon_{\mu}$ lead to small errors in dual norm prediction (cf. Proposition \ref{th:a_priori_EIM}),
the objectives of function approximation and dual norm prediction are arguably quite different: 
we thus expect --- and we empirically demonstrate in the numerical sections ---
that integration-only strategies, which bypass the task of approximating $\Upsilon_{\mu}$,  might be preferable 
when approximating $\Upsilon_{\mu}$  is significantly more challenging than   predicting the dual norm of $\mathcal{L}_{\mu}$. 
\item
{
If we neglect the cost of computing $\{\Upsilon^k = \Upsilon_{\mu^k}  \}_{k=1}^{n_{\rm train}}$,
we observed in Remark \ref{remark_cost_ATI} that 
the   offline cost of the ATI procedure scales with $\mathcal{O}(  M C_{\rm riesz} + (M^2 + n_{\rm train}) \mathcal{N} )$:
for $ n_{\rm train} = n_{\rm train}^{\rm es} \gg M$, 
this cost  is significantly lower than the cost of building the empirical test space $\mathcal{X}_{J_{\rm es}}$, 
$\mathcal{O}( n_{\rm train}^{\rm es} C_{\rm riesz}  + (M^2 + 
( n_{\rm train}^{\rm es} )^2 \mathcal{N})  )$.
However, for several problems, including the ones considered in the numerical section, computation of $\Upsilon_{\mu}$ involves the solution to a PDE: as a result, we expect that in many cases the cost associated with the construction of the empirical test space is negligible compared to the overall offline cost.
}
\end{itemize}

Proposition \ref{th:relationship_MJQ} relates the  number of quadrature points that are needed to achieve a target accuracy to the magnitude of the other discretization parameters $M$ and $J_{\rm es}$.
We postpone the proof of Proposition \ref{th:relationship_MJQ}  to Appendix \ref{sec:proof_tricky}.

\begin{Proposition}
\label{th:relationship_MJQ}
Let
$ \mathcal{L}_{M,\mu}(v) = \int_{\Omega} \, \Upsilon_{M,\mu} (x) \cdot F(x; v)  \, dx$,
$\Upsilon_{M,\mu} = \sum_{m=1}^{M} \, \Theta_m(\mu) \zeta_m$, 
satisfy  
\begin{subequations}
\label{eq:calLM_MJQ}
\begin{equation}
\label{eq:calLM_MJQ_a}
\big|  
\mathcal{L}_{M,\mu}(\phi_j) -   \mathcal{L}_{\mu}(\phi_j) 
\big| \leq 
\delta_{\rm ati}
\quad
\forall \, \mu \in \mathcal{P},
\quad
j=1,\ldots,J_{\rm es},
\end{equation}
for some tolerance $\delta_{\rm ati}>0$.
Then, if we introduce the interpolation error
\begin{equation}
\label{eq:calLM_MJQ_b}
\epsilon_{\rm ati} := \sup_{x \in \Omega, \; j=1,\ldots,J_{\rm es}, \mu \in \mathcal{P} } \, 
\big|
\left( \Upsilon_{\mu}(x) - \Upsilon_{M,\mu}(x)
\right)   \cdot F(x; \phi_j)
\big|
\end{equation}
we find that  
any solution $\boldsymbol{\rho}^{\rm opt}$ to 
\eqref{eq:benchmark_optimization_no_positivity} 
with $\delta = \delta_{\rm ati} + C_{M,J_{\rm es}} (M J_{\rm es} +  1) \epsilon_{\rm ati}$
satisfies
$\|  \boldsymbol{\rho}^{\rm opt} \|_0 \leq 
M  J_{\rm es} + 1$, where $C_{M,J_{\rm es}}$ depends on $\Upsilon_{M,\mu}$ and $\{ \phi_j  \}_j$.
\end{subequations}
\end{Proposition}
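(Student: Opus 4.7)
The plan is to exhibit an explicit sparse quadrature vector $\boldsymbol{\rho}^{\star}$ that is feasible for \eqref{eq:benchmark_optimization_no_positivity} with the enlarged tolerance $\delta = \delta_{\rm ati} + C_{M,J_{\rm es}}(MJ_{\rm es}+1)\epsilon_{\rm ati}$ and whose support has cardinality at most $MJ_{\rm es}+1$. Since the objective of \eqref{eq:benchmark_optimization_no_positivity} is $\|\cdot\|_0$, this at once bounds the support of \emph{any} minimizer $\boldsymbol{\rho}^{\rm opt}$ by $MJ_{\rm es}+1$, which is the claim.

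First I would consider the auxiliary linear system consisting of $MJ_{\rm es}+1$ \emph{equality} constraints expressing exactness of the discrete quadrature against the "building blocks" of the affine surrogate: the functions $g_{m,j}(x):=\zeta_m(x)\cdot F(x;\phi_j)$ for $m=1,\ldots,M$, $j=1,\ldots,J_{\rm es}$, together with the constant function $1$. Assembling the evaluations $g_{m,j}(x_i^{\rm hf})$ into a matrix $\mathbb{G}^M \in \mathbb{R}^{(MJ_{\rm es}+1)\times \mathcal{N}_{\rm q}}$ and the corresponding high-fidelity integrals into $\mathbf{y}^M$, the high-fidelity weight vector $\boldsymbol{\rho}^{\rm hf}$ trivially satisfies $\mathbb{G}^M \boldsymbol{\rho}^{\rm hf} = \mathbf{y}^M$, so the system is consistent. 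By a standard basic-feasible-solution argument (reduction to a maximal set of linearly independent columns), there exists a solution $\boldsymbol{\rho}^{\star}$ with $\|\boldsymbol{\rho}^{\star}\|_0 \leq \mathrm{rank}(\mathbb{G}^M) \leq MJ_{\rm es}+1$. I define $C_{M,J_{\rm es}} := \|\boldsymbol{\rho}^{\star}\|_\infty$, which depends only on $\mathbb{G}^M$ and $\mathbf{y}^M$, hence only on $\Upsilon_{M,\mu}$ and $\{\phi_j\}_j$ as required.

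The core of the argument is then a three-term triangle inequality. By linearity of $\mathcal{Q}(\cdot;\boldsymbol{\rho}^{\star})$ over the span of the $\zeta_m$'s and by construction of $\boldsymbol{\rho}^{\star}$, the quadrature is exact on $\eta_M(\cdot;\phi_j,\mu):=\Upsilon_{M,\mu}\cdot F(\cdot;\phi_j)$. Splitting $\eta = \eta_M + (\eta-\eta_M)$ yields
$$
\bigl|\mathcal{Q}(\eta(\cdot;\phi_j,\mu);\boldsymbol{\rho}^{\star}) - \mathcal{Q}^{\rm hf}(\eta(\cdot;\phi_j,\mu))\bigr|
\leq
\|\boldsymbol{\rho}^{\star}\|_1\,\epsilon_{\rm ati}
\;+\;
\bigl|\mathcal{L}_{M,\mu}(\phi_j) - \mathcal{L}_{\mu}(\phi_j)\bigr|,
$$
where the first term uses \eqref{eq:calLM_MJQ_b} together with $\|\boldsymbol{\rho}^{\star}\|_1 \leq \|\boldsymbol{\rho}^{\star}\|_0\,\|\boldsymbol{\rho}^{\star}\|_\infty \leq (MJ_{\rm es}+1)\,C_{M,J_{\rm es}}$, while the second is bounded by $\delta_{\rm ati}$ via \eqref{eq:calLM_MJQ_a}. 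The analogous estimate for the identity constraint is immediate because $\mathcal{Q}(1;\boldsymbol{\rho}^{\star})=\mathcal{Q}^{\rm hf}(1)$ exactly. Combining these bounds shows $\boldsymbol{\rho}^{\star}$ is feasible for \eqref{eq:benchmark_optimization_no_positivity} with the stated $\delta$, completing the proof.

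The main obstacle I anticipate is the clean definition of the constant $C_{M,J_{\rm es}}$: in the absence of a non-negativity constraint, a solution to $\mathbb{G}^M \boldsymbol{\rho} = \mathbf{y}^M$ supported on a given set of $MJ_{\rm es}+1$ columns can have arbitrarily large sup-norm if those columns form a nearly-singular submatrix. One must therefore either select the basic solution via a pivoting/QR strategy that controls the conditioning of the chosen columns, or simply \emph{define} $C_{M,J_{\rm es}}$ as the infimum of $\|\boldsymbol{\rho}\|_\infty$ over sparse solutions and argue by compactness that the infimum is attained. Either route preserves the additive structure $\delta_{\rm ati} + C_{M,J_{\rm es}}(MJ_{\rm es}+1)\,\epsilon_{\rm ati}$ and makes the dependence of $C_{M,J_{\rm es}}$ explicit in $\Upsilon_{M,\mu}$ and $\{\phi_j\}_j$.
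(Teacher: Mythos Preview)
Your proposal is correct and follows essentially the same route as the paper: construct a sparse weight vector that reproduces the high-fidelity quadrature exactly on the surrogate integrands $\zeta_m\cdot F(\cdot;\phi_j)$ and on the constant $1$, then split $\eta=\eta_M+(\eta-\eta_M)$ and bound the two residual terms by $\delta_{\rm ati}$ and $\|\boldsymbol{\rho}^\star\|_1\,\epsilon_{\rm ati}$ respectively. The only substantive difference is in how the sparse vector is produced: the paper proves a separate EIM-style factorization lemma (Lemma~\ref{th:EIM_proof}) yielding $\mathbb{G}_M=\mathbb{A}\,\mathbb{B}$ with $\mathbb{B}(:,\mathcal{I}_N)=\mathbb{I}_N$ and then sets $\hat{\boldsymbol{\rho}}(\mathcal{I}_N)=\mathbb{B}\,\boldsymbol{\rho}^{\rm hf}$, whereas you invoke the abstract basic-feasible-solution/rank argument. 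Both give a vector supported on at most $MJ_{\rm es}+1$ points; the paper's construction is explicit (Greedy point selection) and defines $C_{M,J_{\rm es}}=\|\mathbb{B}\,\boldsymbol{\rho}^{\rm hf}\|_1$, while your choice $C_{M,J_{\rm es}}=\|\boldsymbol{\rho}^\star\|_\infty$ actually matches the factor $(MJ_{\rm es}+1)$ in the statement more transparently. Your closing remark on conditioning is apt and applies equally to both constructions.
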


Proposition \ref{th:relationship_MJQ} suggests that the number of empirical quadrature points $Q_{\rm eq}$ should depend linearly on $J_{\rm es}$: this implies that EQ+ES is likely to become increasingly suboptimal compared to ATI approaches as $J_{\rm es}$ increases.
However,  as discussed above, since ATI approaches do not directly tackle the problem of interest, there is in practice no guarantee that computable surrogates of $\mathcal{L}$ are quasi-optimal  for a given tolerance $\delta$. 

We also observe that  if $\mathcal{L}_{\mu}$ is parametrically-affine (i.e.,
$\mathcal{L}_{\mu} = \mathcal{L}_{M, \mu}$ for some $M>0$), then
\eqref{eq:calLM_MJQ_a} and \eqref{eq:calLM_MJQ_b}  
hold with 
$\delta_{\rm ati} = \epsilon_{\rm ati} = 0$. 
As a result, Proposition \ref{th:relationship_MJQ}
shows that, for any $\delta>0$ and any choice of the training set $\mu^1,\ldots, \mu^{n_{\rm train}^{\rm eq}}$,
any solution $\boldsymbol{\rho}^{\rm opt}$ to 
\eqref{eq:benchmark_optimization_no_positivity} 
satisfies $\|  \boldsymbol{\rho}^{\rm opt} \|_0 \leq  M  J_{\rm es} + 1$.

%
%
%
%
%
%
%
%

\section{Numerical results}
\label{sec:numerics}
\subsection{Comparison between EQ+ES and an EIM-based ATI approach}
\label{sec:numerical_comparison}
	
We consider the problem of estimating the dual norm of the $\mathcal{X} = H^1(\Omega)$ functional 
\begin{subequations}
\label{eq:synthetic_model_problem}
\begin{equation}
\mathcal{L}_{\mu}(v)
= \int_{\Omega} \, \Phi( u(x; \mu)   ) \, v(x) \, dx.
\end{equation}
Here,  $\Omega = (0,3)^2$, $\mathcal{P} = [0.7,1.3]^8$, and
$u:\Omega \times \mathcal{P} \to \mathbb{R}$ is the solution to the thermal block problem (\cite[section 6.1.1]{rozza2008reduced})
\begin{equation}
 \left\{
\begin{array}{ll}
-\nabla \cdot 
\left(
\kappa(\mu) \nabla u(\mu)
\right)
=0
& 
\mbox{in}  \; \Omega \\[3mm]
\kappa(\mu) 
\frac{\partial }{ \partial n} u(\mu)
= g
& 
\mbox{on}  \; \Gamma_{1} \cup 
\Gamma_{2} 
\cup
\Gamma_{3}  \\[3mm]
u (\mu)
= 0
& 
\mbox{on}  \; \Gamma_4 \\
\end{array}
\right.
\end{equation}
where  $\Omega = \bigcup_{i=1}^9 \, \Omega_i$, and 
\begin{equation}
\kappa(x, \mu)  
=
\left\{
\begin{array}{ll}
1 & \mbox{in} \, \Omega_1, \\
\mu_i & \mbox{in} \, \Omega_{i+1}, \; i=1,\ldots,8; \\
\end{array}
\right.
\qquad
g(x)
=
\left\{
\begin{array}{ll}
1 & \mbox{on} \, \Gamma_1, \\
0 & \mbox{on} \, \Gamma_2, \\
1 - 2 x_1 & \mbox{on} \, \Gamma_3. \\
\end{array}
\right.
\end{equation}
Furthermore, we endow $\mathcal{X}$ with the inner product
$$
(w,v) = \mathcal{Q}^{\rm hf}(\nabla w \cdot \nabla v + w v).
$$
Figure \ref{fig:plot_domain}(a) shows the computational domain, and the partition $\{ \Omega_i \}_{i=1}^9$;
while Figure \ref{fig:plot_domain}(b) shows the behavior of the solution $u$ for a given value of $\mu \in \mathcal{P}$.
We rely on a $P=3$ Finite Element (FE) discretization ($\mathcal{N}=8281$, $\mathcal{N}_{\rm q} = 34200$).
Simulations are performed in Matlab $2017$b on a Desktop computer (RAM 16Gb, 800 Mhz, Processor Intel Xeon 3.60GHz, 8 cores).

We here consider two choices for $\Phi$:
\begin{equation}
\Phi_1(u) =  \log \left( 1 + e^{u + 4} \right), \quad
\Phi_2(u) = \max\{ u+4, 0\}.
\end{equation}
In statistics and Machine Learning (see, e.g., \cite{james2013introduction}), 
$\Phi_1$ is known as logistic loss, while $\Phi_2$ is known as Hinge loss;
as shown in Figure \ref{fig:plot_domain}(c),  $\Phi_1$ is a smooth approximation of $\Phi_2$.
 Our choice is motivated by the need to investigate performance for both smooth fields and relatively rough fields:
 we have indeed that  $\Phi_1 \in C^{\infty}(\mathbb{R})$, while 
 $\Phi_2 \in {\rm Lipschitz}(\mathbb{R})$. 
 \end{subequations}
	
\begin{figure}[h!]
\centering
\subfloat[] 
{  \includegraphics[width=0.33\textwidth]
 {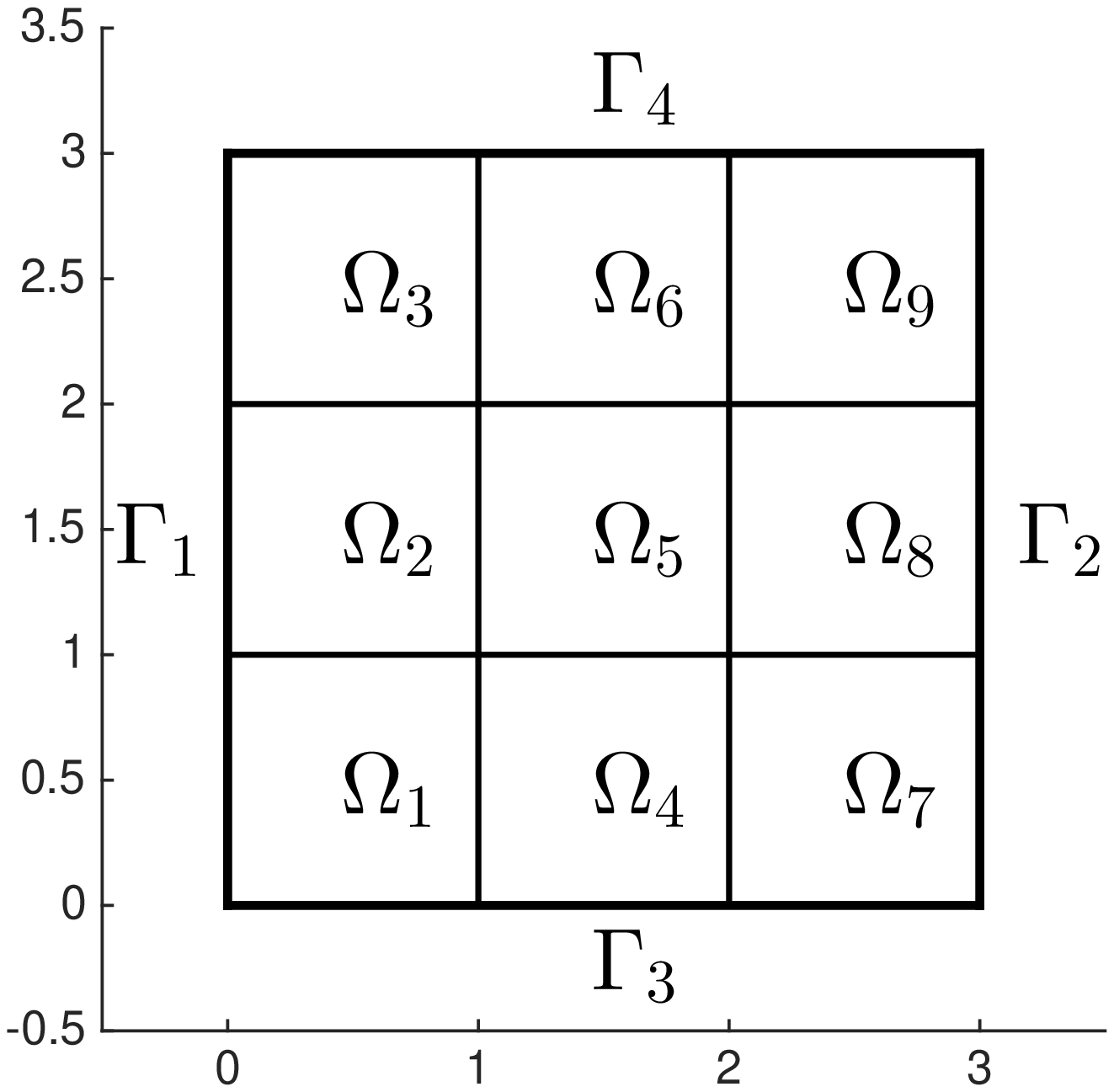}}
  ~~
 \subfloat[] 
{  \includegraphics[width=0.33\textwidth]
 {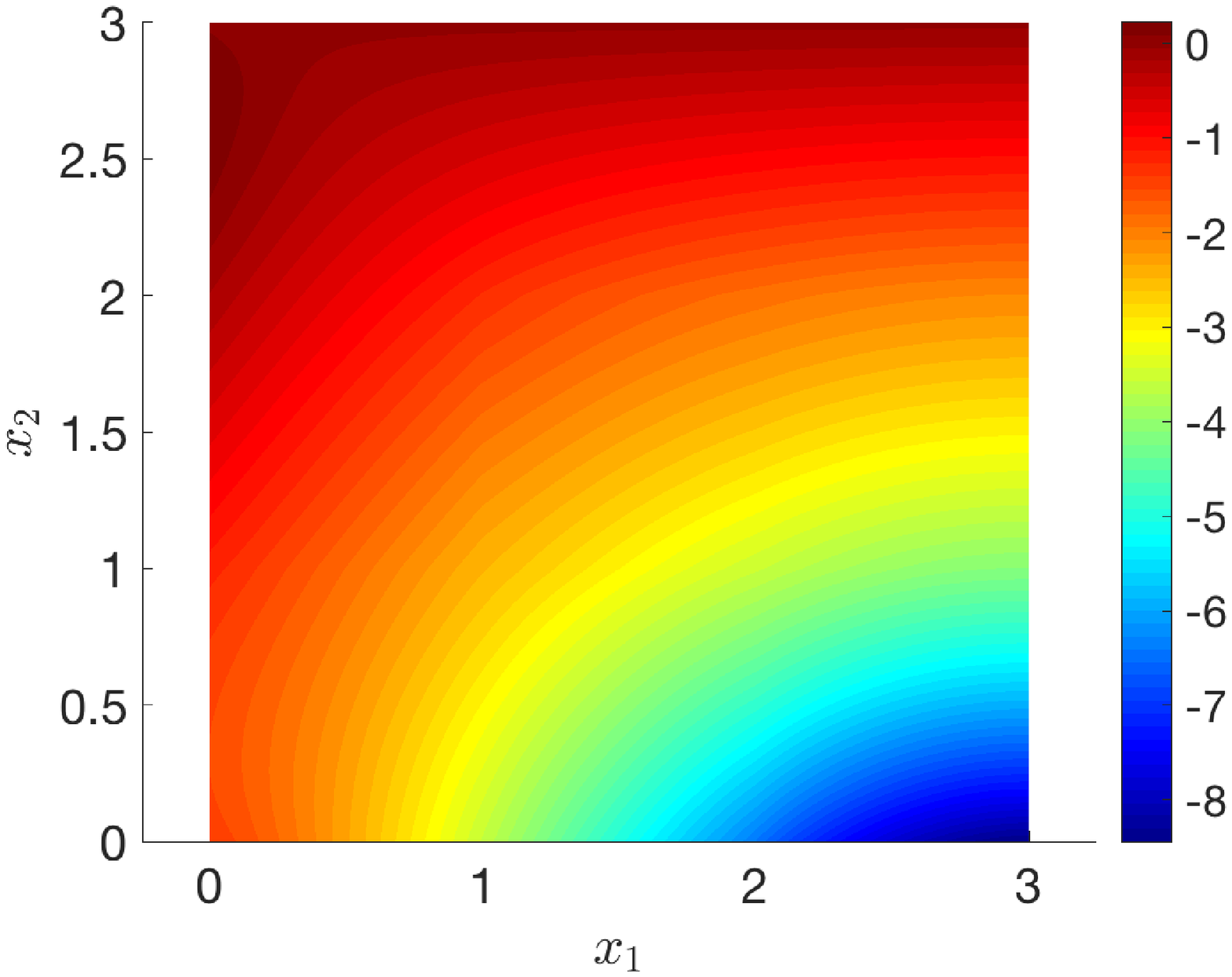}}
 ~~
 \subfloat[] 
 {  \includegraphics[width=0.33\textwidth]
 {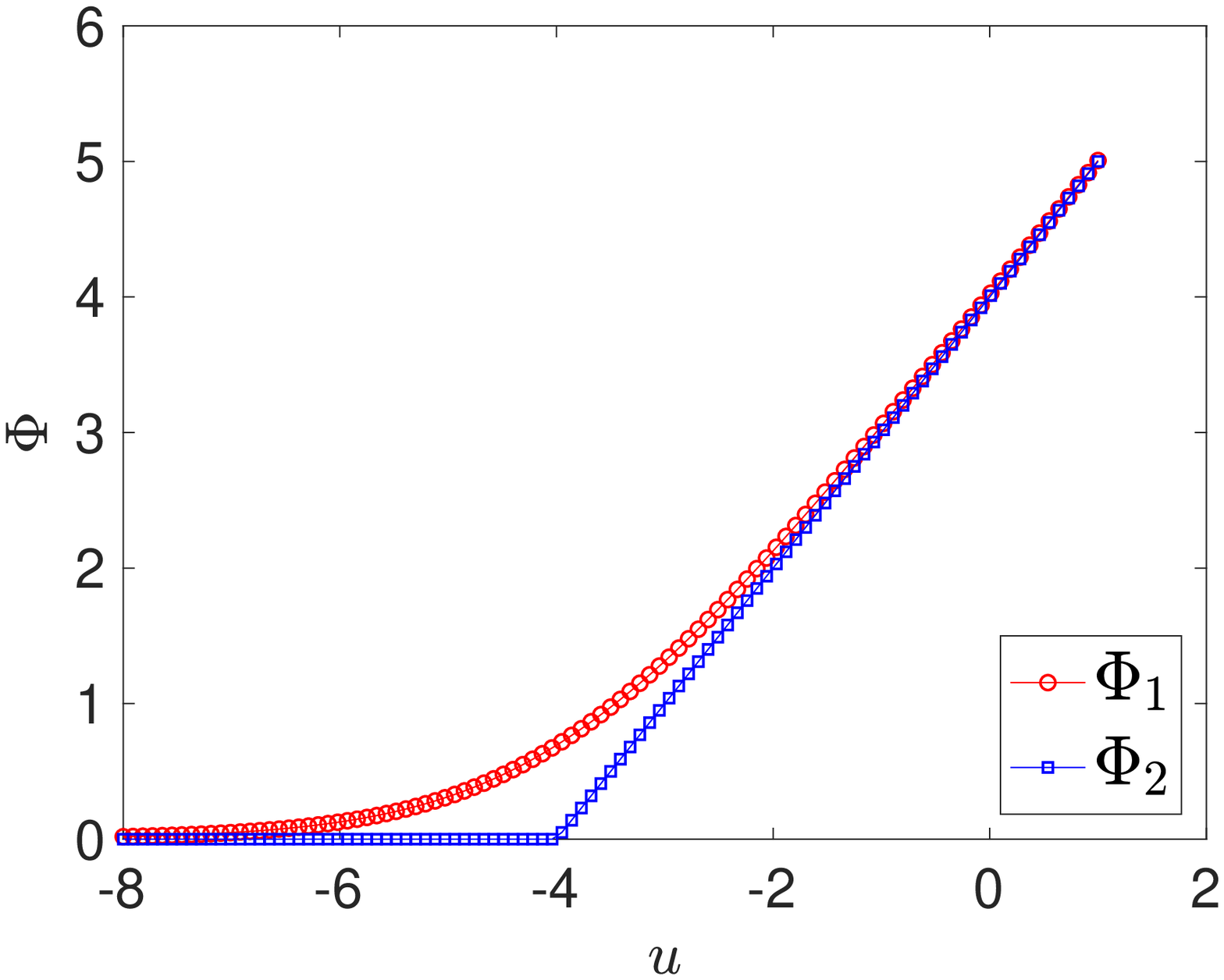}}
 
 \caption{thermal block problem. (a) computational domain. (b)
 behavior of $u$ for
 $\mu = [1.08,  0.79, 1.02,$   $1.24,  0.73,$    $1.23,  1.01, 0.84]$.
 (c) behavior of $\varphi_1$ and $\varphi_2$.}
 \label{fig:plot_domain}
  \end{figure}  

We present results for five approaches: an EIM-based ATI approach,
an EIM-based ATI+ES approach (see Remark \ref{remark_ATI_ES}),
 $\ell^1$-EQ+ES, 
EIM-EQ+ES and   MIO-EQ+ES. 
The empirical test space is generated using  the snapshot set
$\{  \Phi( u(\cdot; \mu^{\ell}) ) \}_{\ell=1}^{n_{\rm train}^{\rm es}}$
where  $\mu^1,\ldots, \mu^{n_{\rm train}^{\rm es}}$  $\overset{\rm iid}{\sim}$ ${\rm Uniform}(\mathcal{P})$, $n_{\rm train}^{\rm es} = 200$;
similarly, the approximation space associated with EIM is generated using the same snapshot set (see Algorithm \ref{EIM} in Appendix \ref{sec:EIM} for further details).
To generate the EQ rule, we impose the accuracy constraints in 
$\Xi^{\rm train, eq} = \{ \mu^{\ell} \}_{\ell=1}^{n_{\rm train}^{\rm eq}}$ 
with $n_{\rm train}^{\rm eq} = 50$; furthermore, we use the divide-and-conquer approach   discussed in section \ref{sec:divide_conquer} with $N_{\rm part}=40$: to speed up computations, local sparse representation problems (see \eqref{eq:DC_local}) are solved using $\ell^1$ for both 
$\ell^1$-EQ+ES and MIO-EQ+ES.
Moreover, we impose the threshold $T_{\rm max}=1800 [\rm s]$ for the maximum run time of MIO-EQ+ES.
For 
$\ell^1$-EQ+ES, we rely on the Matlab routine \texttt{linprog} to solve the LP problem with  default initial condition;
for MIO-EQ+ES, we rely on \texttt{intlinprog} and we consider the 
$\ell^1$-EQ+ES solution as initial condition for the optimizer.
On the other hand, performance is measured using 
$\{  \Phi( u(\cdot; \tilde{\mu}^{\ell}) ) \}_{\ell=1}^{n_{\rm test}}$,
where $\tilde{\mu}^1,\ldots, \tilde{\mu}^{n_{\rm test}}$ $\overset{\rm iid}{\sim}$ ${\rm Uniform}(\mathcal{P})$,
$n_{\rm test} = 100$.

Figure \ref{fig:Jconvergence} shows the behavior of the maximum out-of-sample error $\max_k \, L(\tilde{\mu}^k) - L_{J_{\rm es}}(\tilde{\mu}^k)$ and compares it with the squared best-fit error 
$\max_k \|  \Pi_{\mathcal{X}^{\perp}_{J_{\rm es}} } \xi_{\mu} \|_{\mathcal{X}}^2$, for the two choices of $\Phi$ considered. 
We observe that
$ L(\tilde{\mu}^k) - L_{J_{\rm es}}(\tilde{\mu}^k) \sim C \|  \Pi_{\mathcal{X}_{J_{\rm es}}^{\perp}} \xi_{\mu} \|_{\mathcal{X}}^2$: this is in good agreement with 
Eq. \eqref{eq:aux_estimate_proof} of
Proposition \ref{th:a_priori}.
We further observe that convergence with ${J_{\rm es}}$ is extremely   rapid for both $\Phi=\Phi_1$ and   $\Phi=\Phi_2$.

 \begin{figure}[h!]
\centering
\subfloat[$\Phi=\Phi_1$] {\includegraphics[width=0.48\textwidth]
 {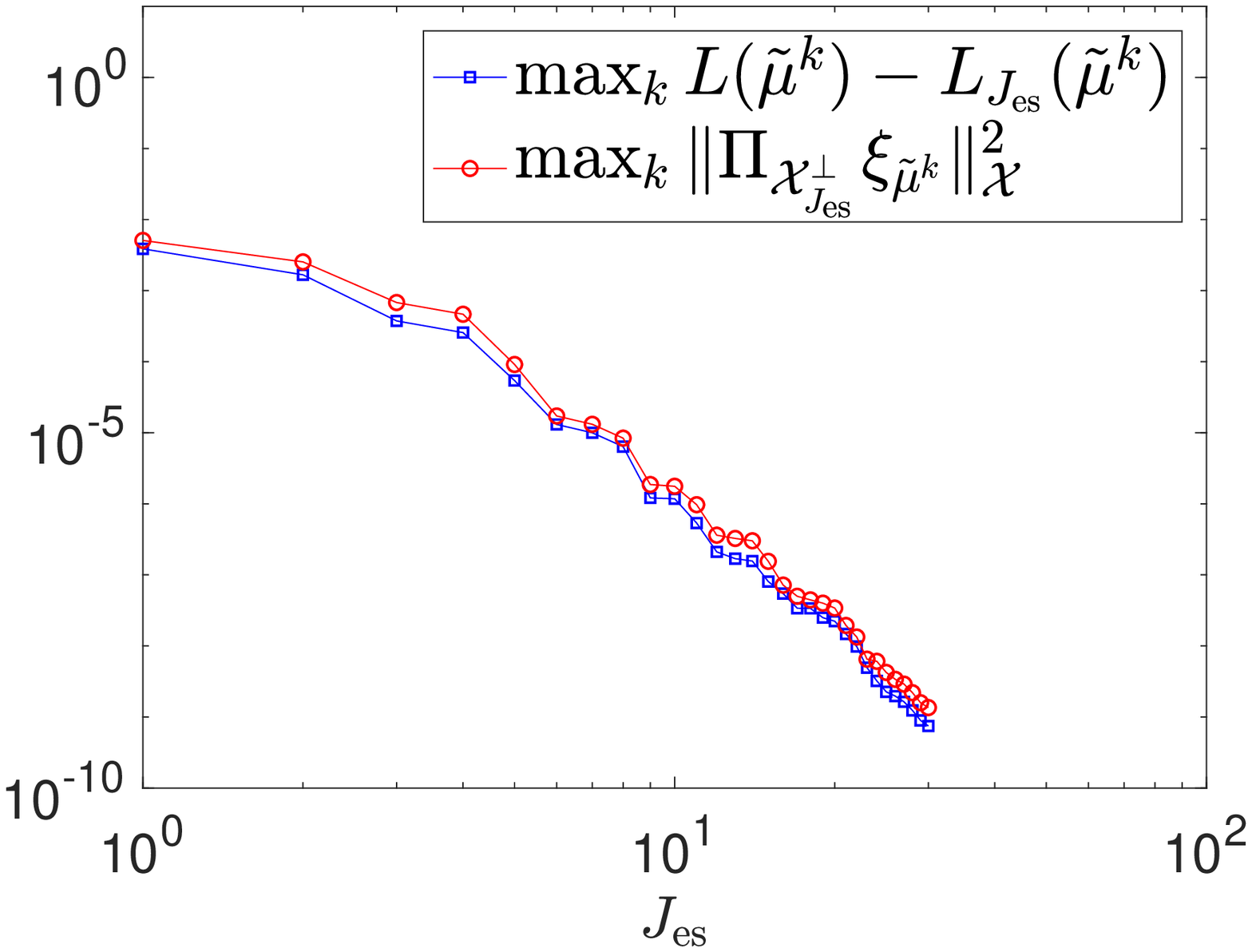}}
 ~
\subfloat[$\Phi=\Phi_2$] {\includegraphics[width=0.48\textwidth]
 {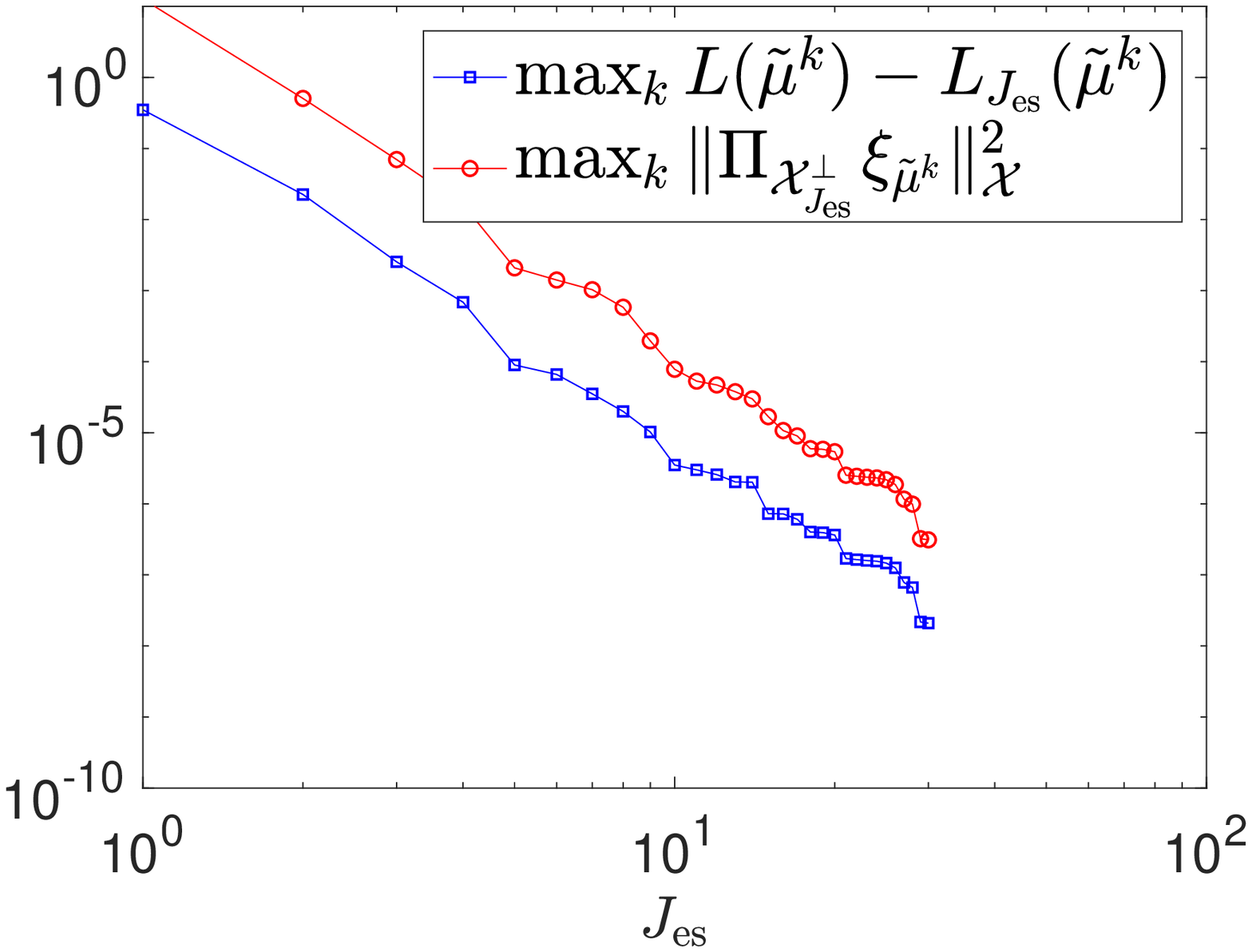}}
  
\caption{ 
behavior of  
$\max_k \, L(\tilde{\mu}^k) - L_{J_{\rm es}}(\tilde{\mu}^k)$ and  
$\max_k \|  \Pi_{\mathcal{X}_{J_{\rm es}}^{\perp}}  \xi_{\mu} \|_{\mathcal{X}}^2$
with $J_{\rm es}$, for two choices of $\Phi$.
}
 \label{fig:Jconvergence}
\end{figure} 

Figures \ref{fig:compare_pos}   show results for the five procedures:
for $\ell^1$-EQ+ES and MIO-EQ+ES, we impose the non-negativity constraint. 
Here,  $\mathcal{C}_{\rm on}$ denotes the number of floating points loaded during the online stage for the different methods: note that $\mathcal{C}_{\rm on} =M^2$ for ATI, 
$\mathcal{C}_{\rm on} =M J_{\rm es}$ for ATI+ES,  and 
$\mathcal{C}_{\rm on} = J_{\rm es} Q_{\rm eq}$ for EQ+ES.
On the other hand,  $E_{\rm test}^{(\infty)}$ is the maximum prediction error over the test set:
\begin{equation}
\label{eq:E_test_inf}
E_{\rm test}^{(\infty)} := \max_{k=1,\ldots,n_{\rm test}} \, 
\big|  L(\tilde{\mu}^k) \,  - \, \widehat{L}(\tilde{\mu}^k)  \big| ,
\end{equation}
where $\widehat{L}(\cdot)$ denotes the predicted dual norm.
For ATI, we show results for $M=1,2,\ldots,120$; for EQ+ES we show results for several prescribed tolerances --- $\delta = [10^{-2},10^{-3}, 10^{-4}, 10^{-5}, 10^{-6}]$ for $\Phi= \Phi_1$ and
$\delta = [10^{-2},10^{-3}, 5 \cdot 10^{-4}, 10^{-4}]$ for $\Phi= \Phi_2$ --- 
and two values of $J_{\rm es}$, $J_{\rm es}=10,15$. 
We recall (cf.  Remark \ref{remark_ATI_ES}) that for $J_{\rm es}\geq M$ ATI+ES
is equivalent to ATI; therefore, we set $J_{\rm es,M}= {\rm min}(M, J_{\rm es})$.

Some comments are in order.
We observe that in all our examples ATI+ES is superior to the standard ATI approach:
for $J_{\rm es} \gtrsim 10$, discretization error associated with the empirical test space is negligible compared to the error $| L(\mu) - L_M(\mu) |$. 
This also explains why increasing $J_{\rm es}$ from $10$ to $15$ does not lead to any significant improvement in accuracy.
We further observe that ATI+ES significantly outperforms the three EQ+ES procedures considered  
for $\Phi=\Phi_1$ (smooth case), while 
ATI and ATI+ES approaches are less accurate  than $\ell^1$-EQ+ES and MIO-EQ+ES for most choices of the discretization parameters for  $\Phi=\Phi_2$ (rough case).  These  results suggest that EQ procedures might guarantee better performance for irregular parametric functions $\Phi$, particularly for small tolerances.
Finally, we note that $\ell^1$-EQ+ES and MIO-EQ+ES lead to similar performance, for both choices of $\Phi$ and for all choices of the discretization parameters,
while EIM-EQ+ES is less accurate for the rough test case.

\begin{figure}[h!]
\centering
\subfloat[$\Phi=\Phi_1$, $J_{\rm es}=10$] {\includegraphics[width=0.48\textwidth]
 {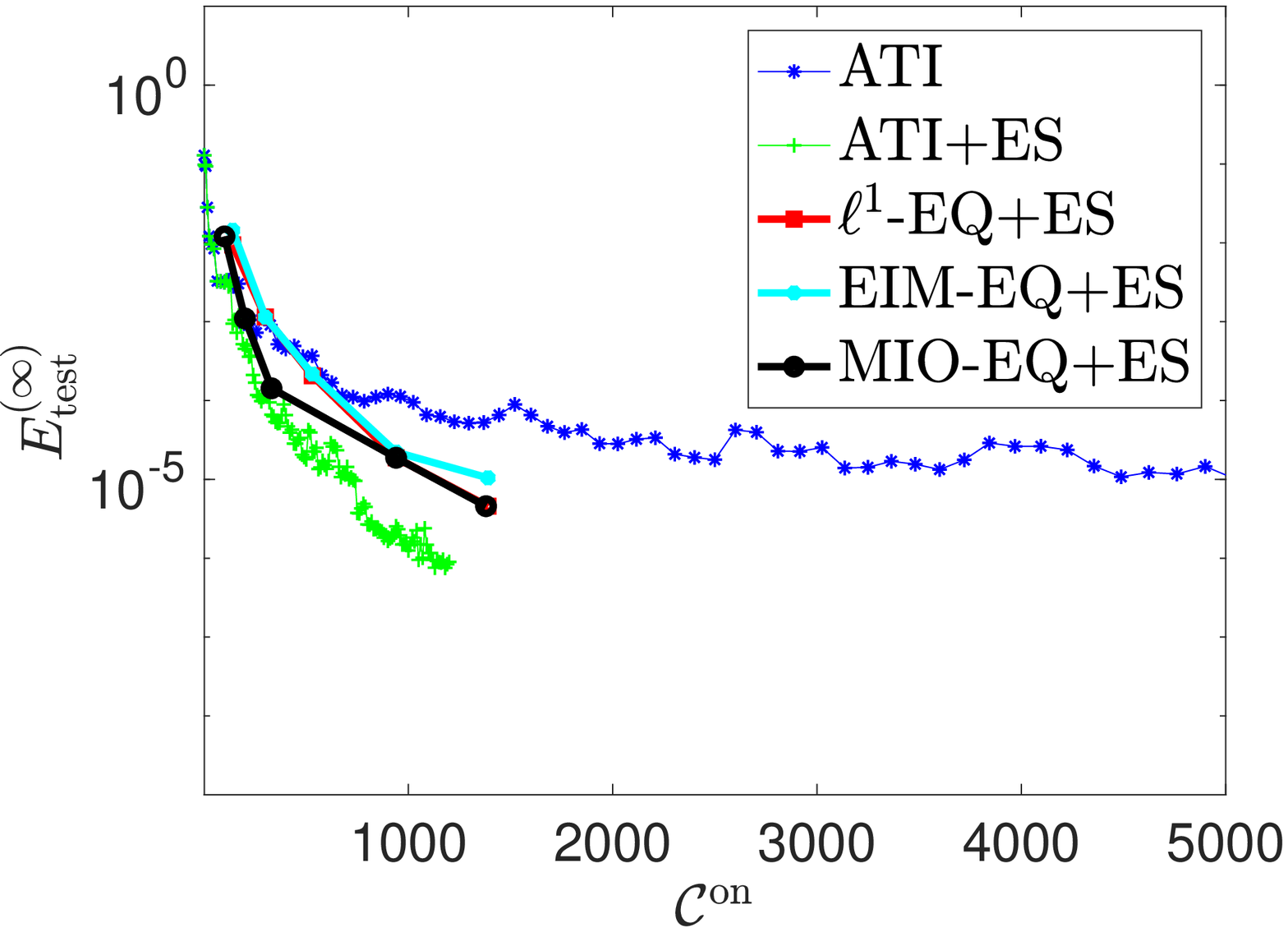}}
 ~
\subfloat[$\Phi=\Phi_1$, $J_{\rm es}=15$] {\includegraphics[width=0.48\textwidth]
 {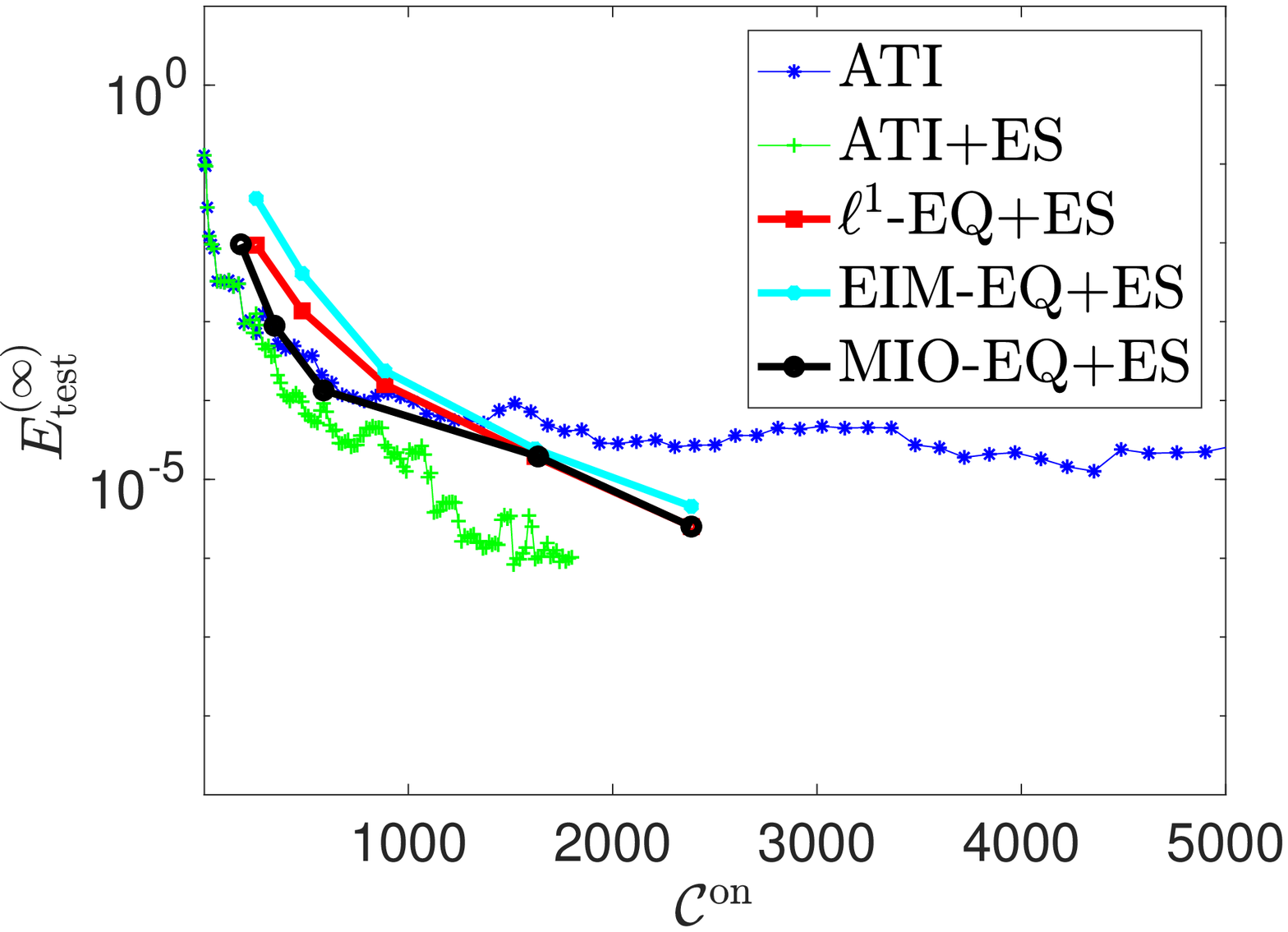}}
  
\subfloat[$\Phi=\Phi_2$, $J_{\rm es}=10$] {\includegraphics[width=0.48\textwidth]
 {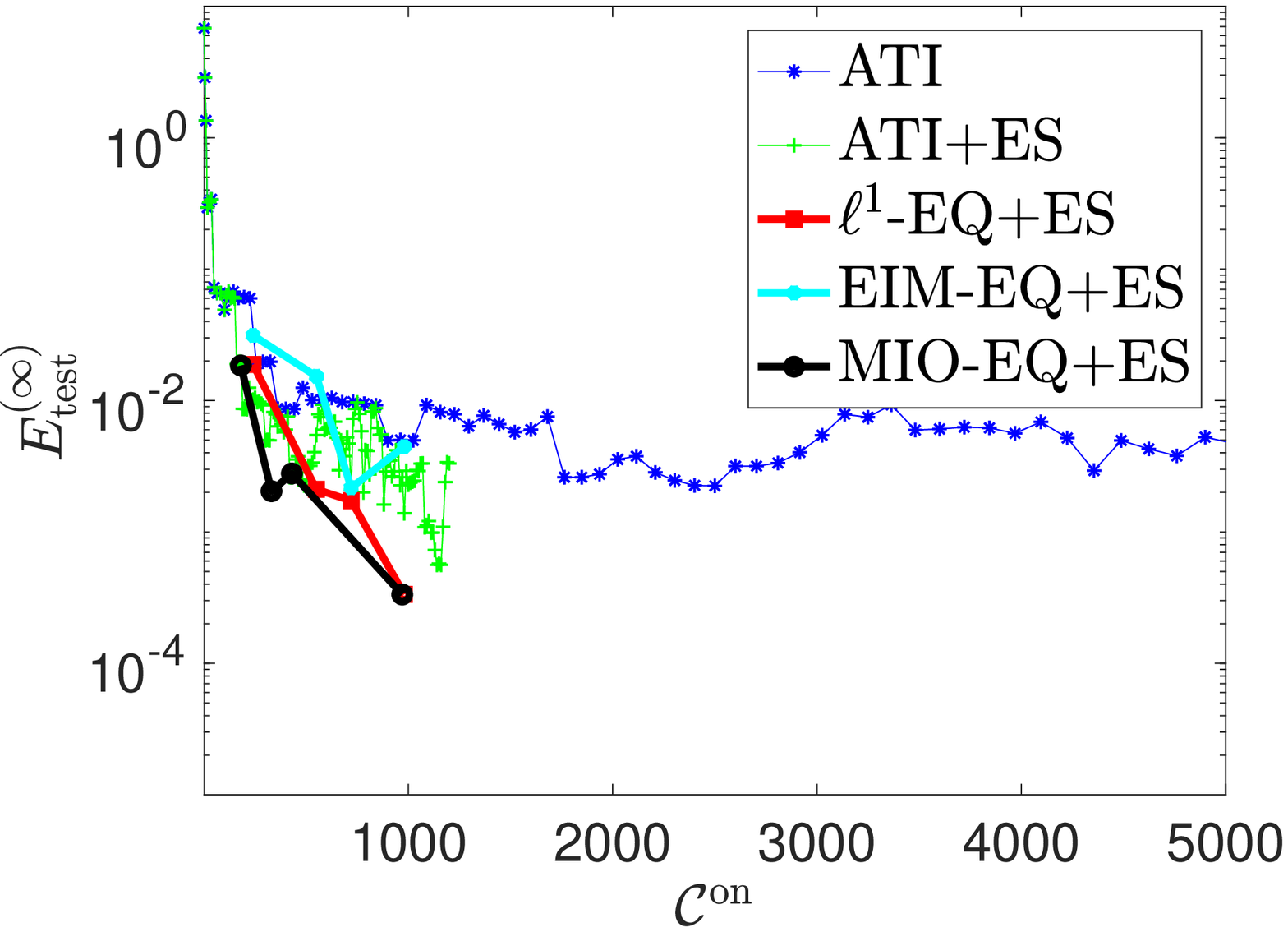}}
 ~
\subfloat[$\Phi=\Phi_2$, $J_{\rm es}=15$] {\includegraphics[width=0.48\textwidth]
 {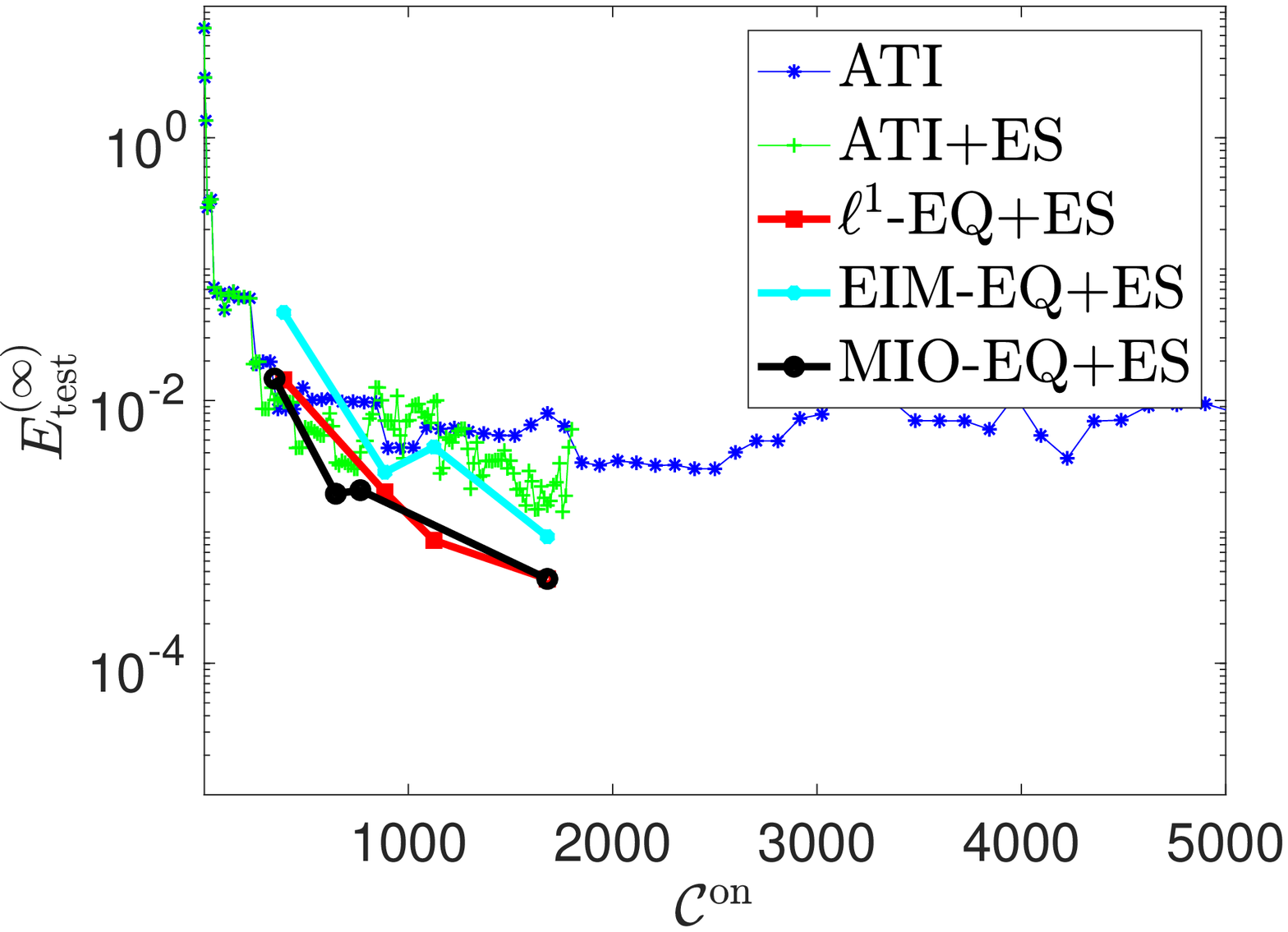}}
  
\caption{ 
performance of several dual norm prediction routines.
Behavior of $E_{\rm test}^{(\infty)}$ \eqref{eq:E_test_inf} with respect to $\mathcal{C}^{\rm on}$, for $\Phi=\Phi_1,\Phi_2$, and $J_{\rm es}=10,15$.
}
 \label{fig:compare_pos}
\end{figure} 

In Table  \ref{tab:cost_table},
we report representative offline costs of dual norm estimation procedures; to facilitate interpretation, we separate sampling costs associated with the computation of $\{ \Phi(u(\cdot; \mu^{\ell}))  \}_{\ell=1}^{n_{\rm train}^{\rm es}=n_{\rm train}}$ --- which are shared by all methods ---
from the other offline costs. 
ATI and ATI+ES are less expensive than $\ell^1$-EQ,  EIM-EQ+ES and MIO-EQ+ES; however,   due to the overhead associated with the sampling cost, costs of ATI, ATI+ES, $\ell^1$-EQ,  EIM-EQ+ES are of the same order magnitude. On the other hand, MIO-EQ+ES is considerably more expensive.

\begin{table}
\caption{representative costs of dual norm estimation procedures; we separate sampling costs from the other costs.}
\begin{center}
\begin{tabular}{|l|l|}
\hline
Method
 &
elapsed cost  [s]
 \\ [2mm]
\hline
ATI ($M=120$)
 &
 $10.40$ + $0.60$
 \\ [2mm]
\hline
ATI+ES ($M=120, J_{\rm es}=10$)
&
 $10.40$ + $1.08$
\\  [2mm]
\hline
$\ell^1$ EQ+ES ($\delta=10^{-6}, J_{\rm es}=10$, pos. weights)
&
 $10.40$ + $26.12$
\\  [2mm]
\hline
MIO EQ+ES ($\delta=10^{-6}, J_{\rm es}=10$, pos. weights)
&
  $10.40$ + $1800$
\\
\hline
EIM EQ+ES ($Q_{\rm eq}=200, J_{\rm es}=10$)
&
  $10.40$ + $4.85$
\\
\hline
\end{tabular}
\end{center}
\label{tab:cost_table}
\end{table}

In Figure \ref{fig:compare_real}, we show results for $\ell^1$-EQ+ES and MIO-EQ+ES for both non-negative weights and for real-valued weights. We observe that considering real-valued weights leads to a slight improvement in performance, particularly for $\Phi=\Phi_2$.

\begin{figure}[h!]
\centering
\subfloat[$\Phi=\Phi_1$,   $\ell^1$-EQ+ES] {\includegraphics[width=0.48\textwidth]
 {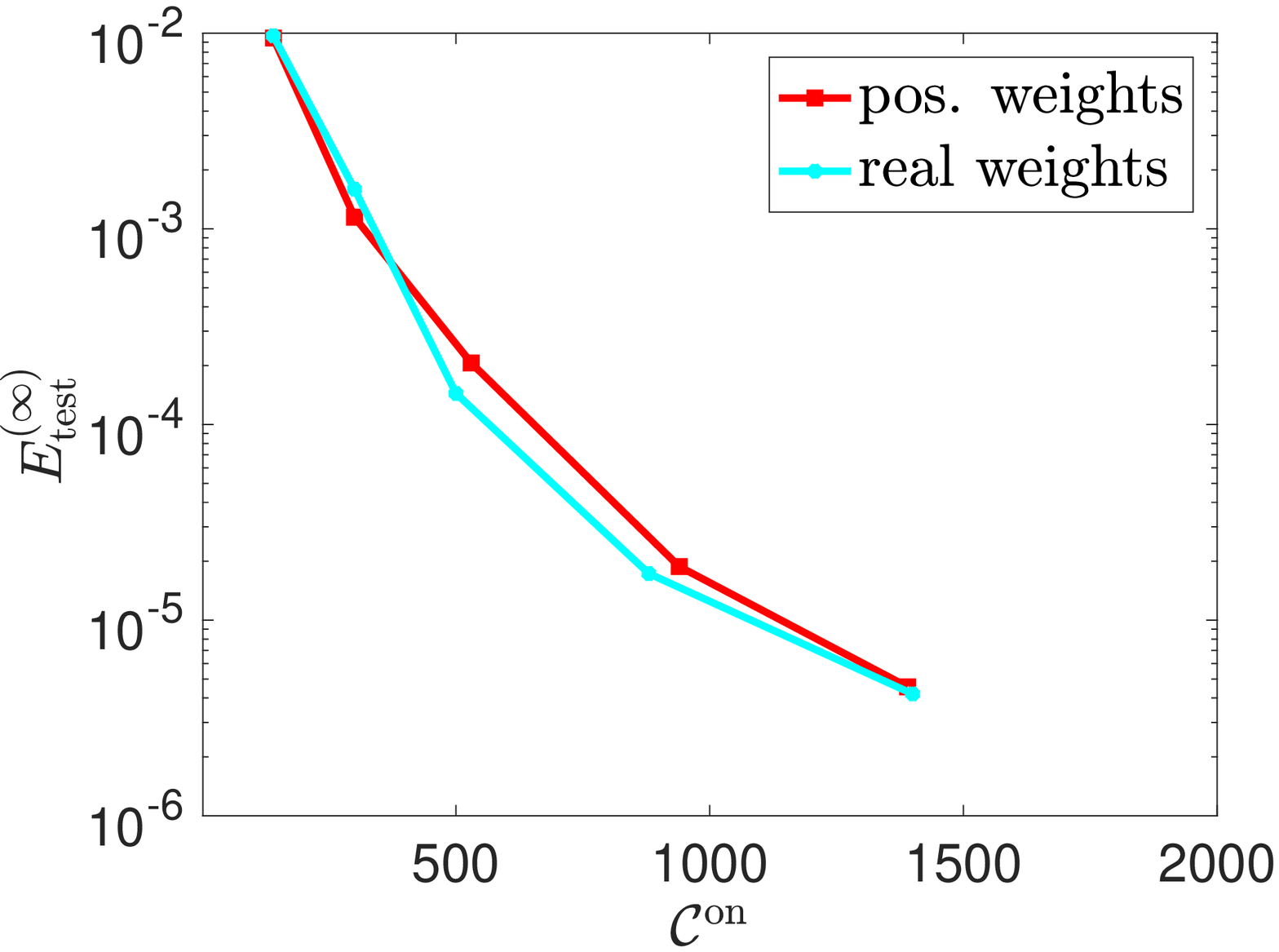}}
 ~
\subfloat[$\Phi=\Phi_1$,   MIO-EQ+ES] {\includegraphics[width=0.48\textwidth]
 {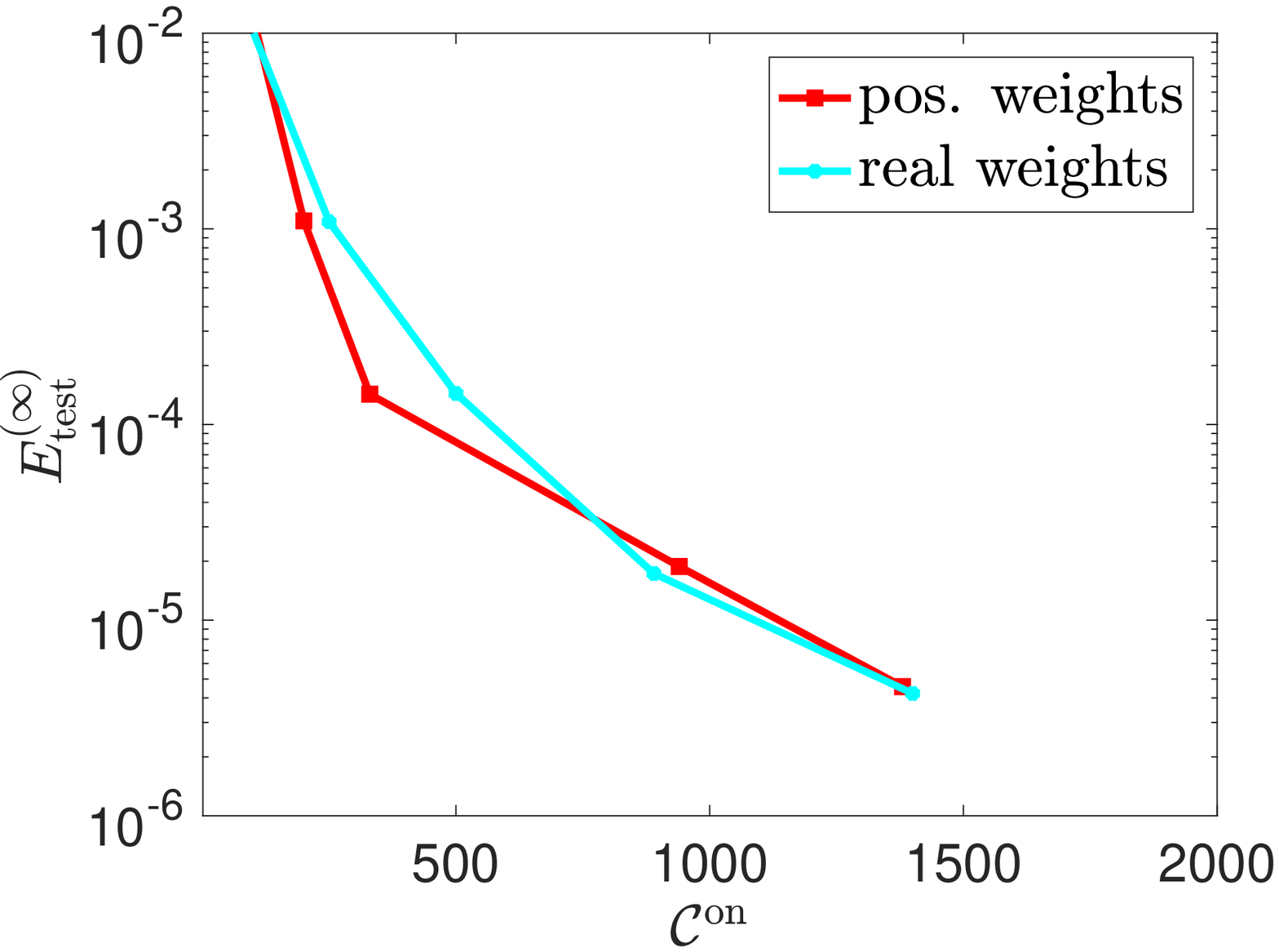}}
 
\subfloat[$\Phi=\Phi_2$,   $\ell^1$-EQ+ES] {\includegraphics[width=0.48\textwidth]
 {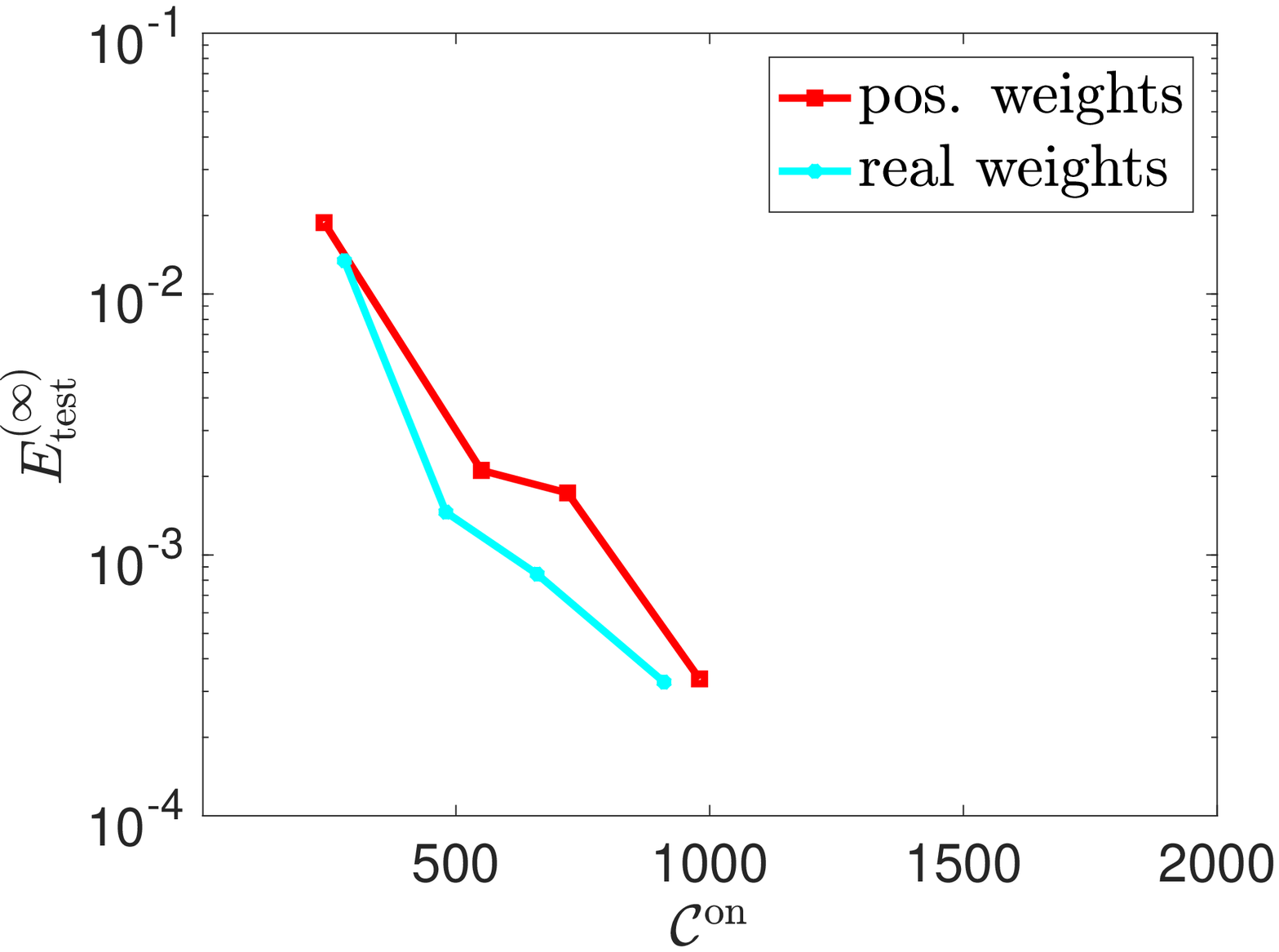}}
 ~
\subfloat[$\Phi=\Phi_2$,  MIO-EQ+ES] {\includegraphics[width=0.48\textwidth]
 {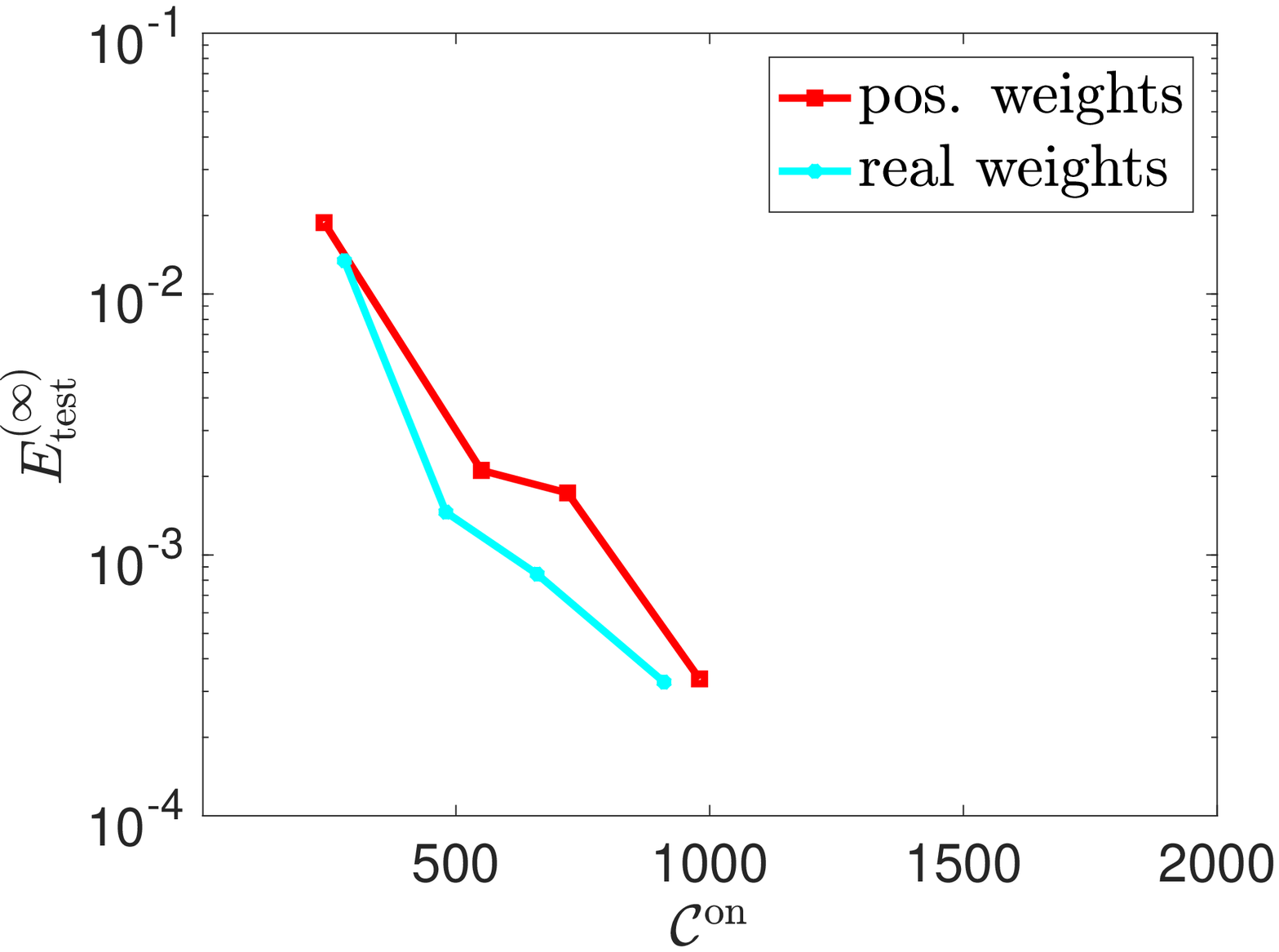}}
  
\caption{ 
influence of the non-negativity constraints  \eqref{eq:positivity_constraint} for $\ell^1$-EQ+ES and MIO-EQ+ES.
Behavior of   $E_{\rm test}^{(\infty)}$ with respect to  $\mathcal{C}^{\rm on}$, for $\Phi=\Phi_1,\Phi_2$ and $J_{\rm es}=10$.
}
 \label{fig:compare_real}
\end{figure}

In Figure \ref{fig:JQ_analysis}, we investigate performance of 
$\ell^1$-EQ+ES with positive weights for $\Phi=\Phi_1,\Phi_2$, for several values of $J_{\rm es}$.
We observe that for small values of $J_{\rm es}$, the "$J_{\rm es}$-error" associated with ES dominates;
as $J_{\rm es}$ increases, $E_{\rm test}^{(\infty)}$ reaches a threshold that depends on the value of the quadrature tolerance $\delta$. We further observe that $Q_{\rm eq}$ grows linearly with $J_{\rm eq}$: this is in good agreement with the result in Proposition \ref{th:relationship_MJQ}. 

\begin{figure}[h!]
\centering
\subfloat[$\Phi=\Phi_1$] {\includegraphics[width=0.48\textwidth]
 {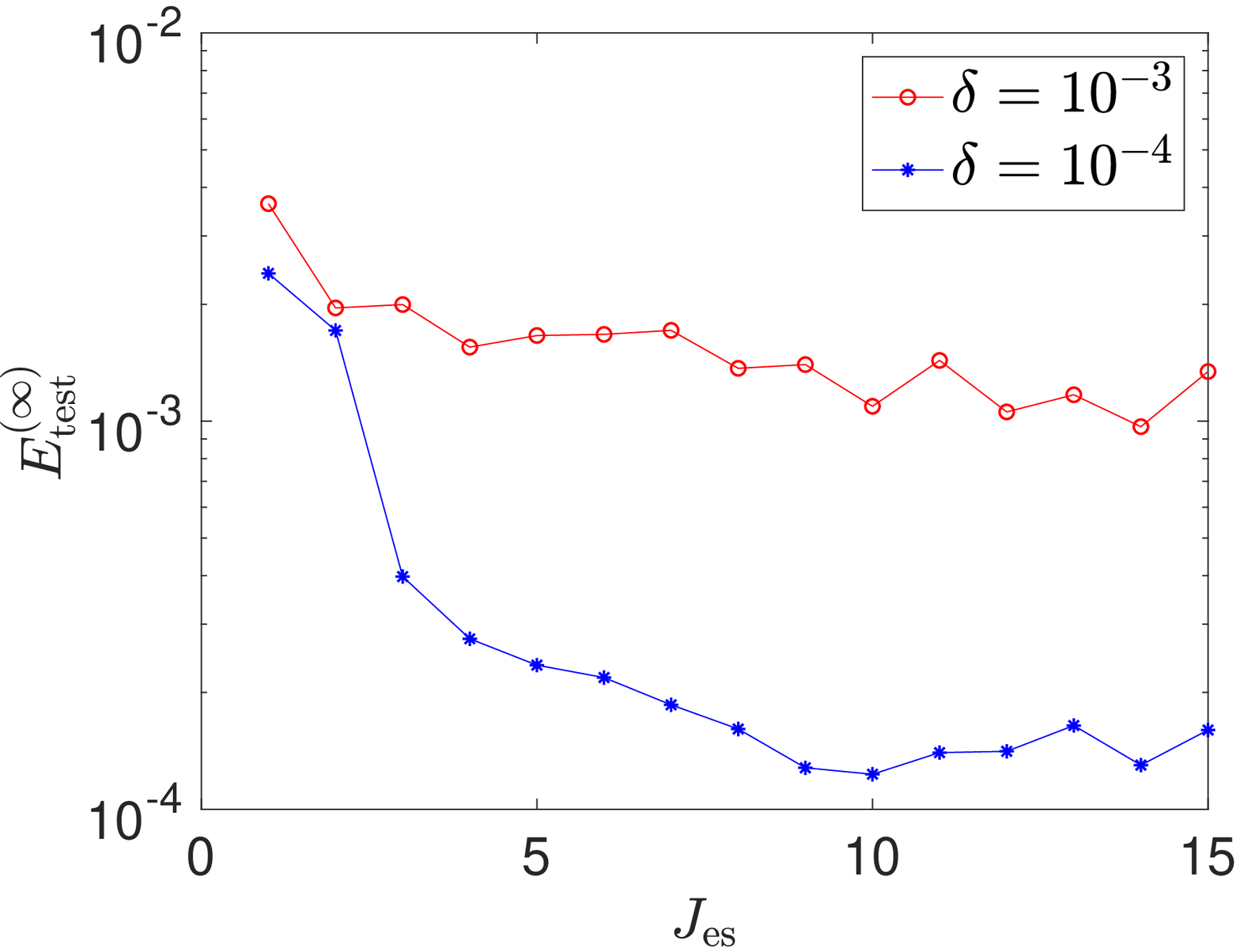}}
 ~
\subfloat[$\Phi=\Phi_1$] {\includegraphics[width=0.48\textwidth]
 {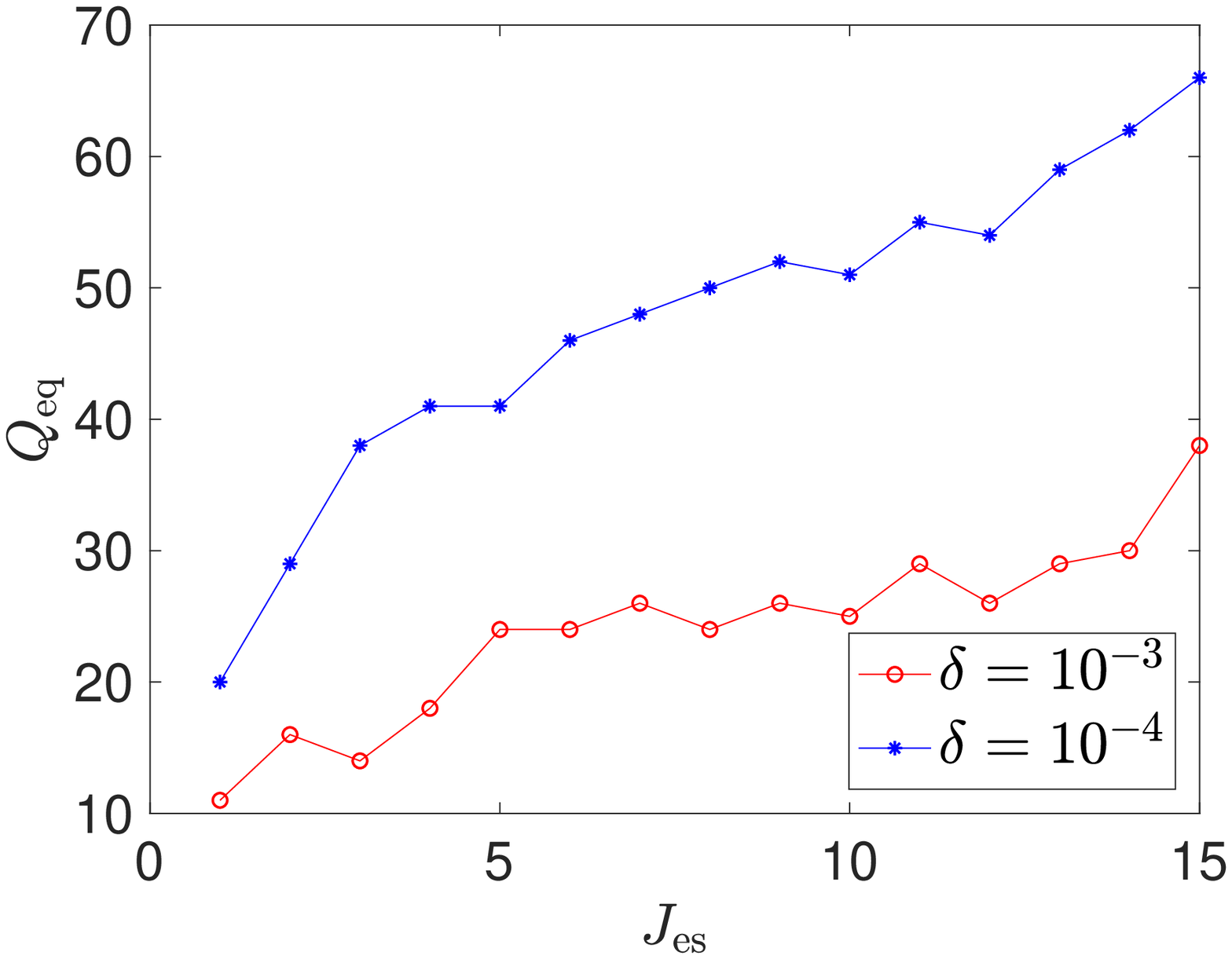}}
 
\subfloat[$\Phi=\Phi_2$] {\includegraphics[width=0.48\textwidth]
 {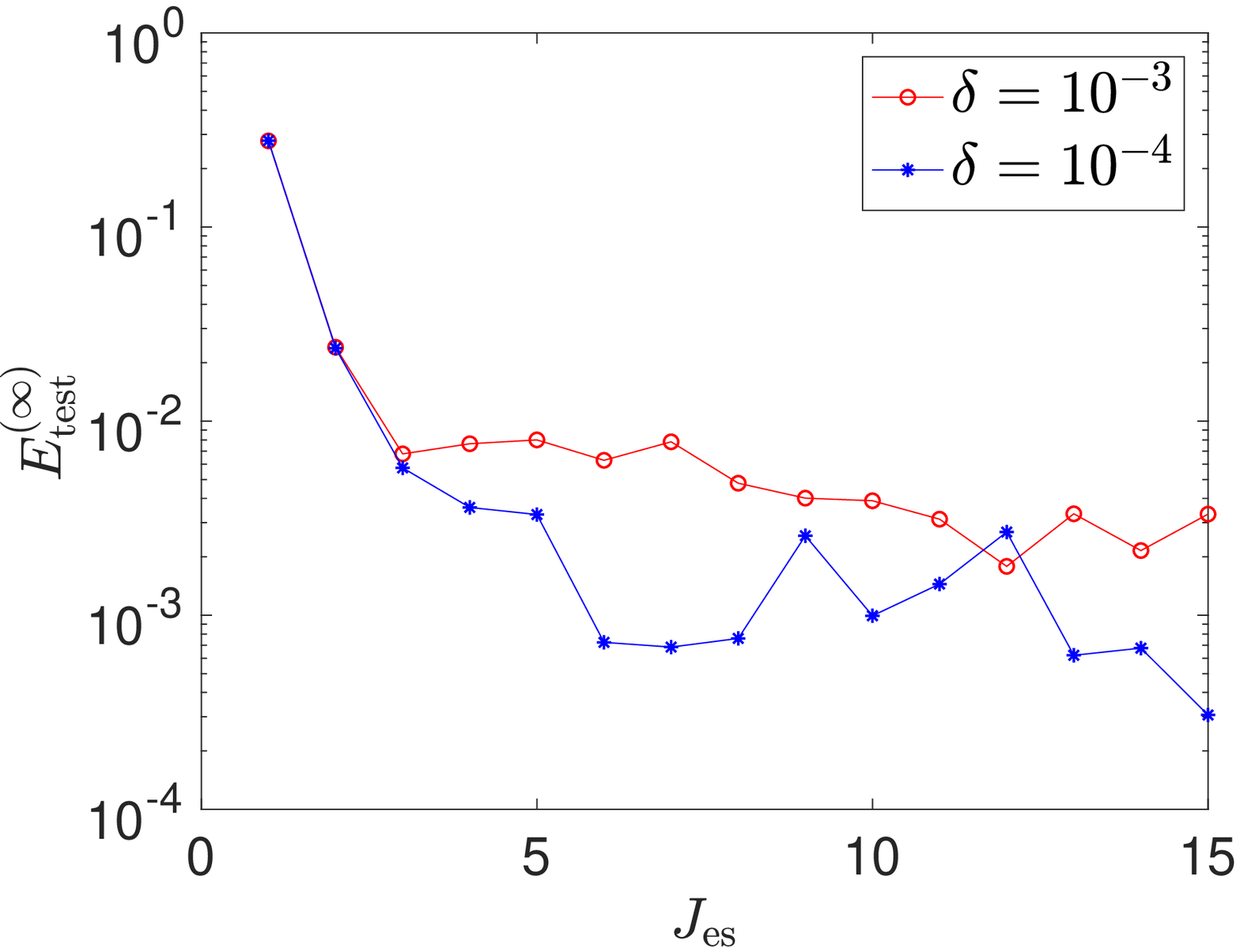}}
 ~
\subfloat[$\Phi=\Phi_2$] {\includegraphics[width=0.48\textwidth]
 {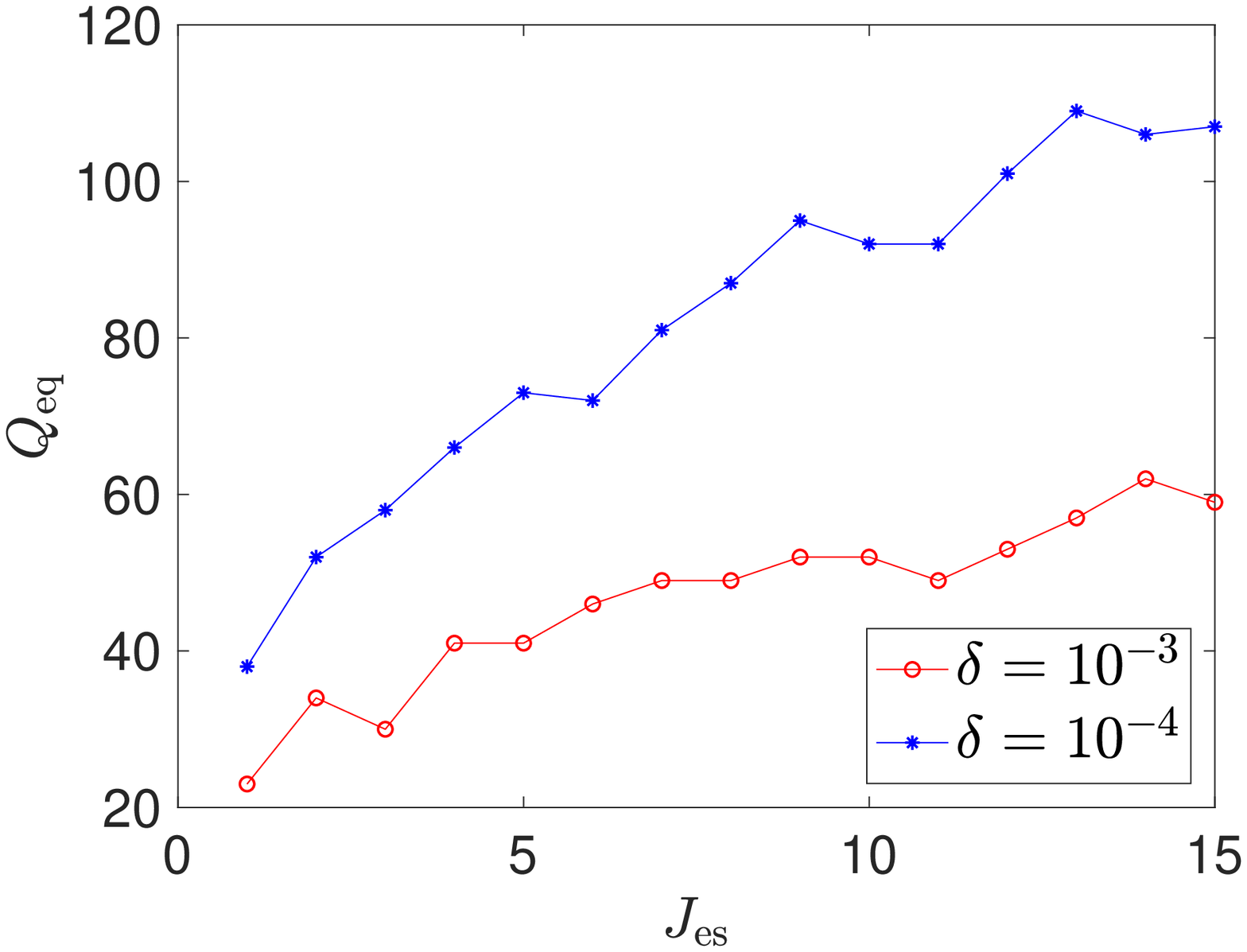}} 
   
\caption{ 
performance of $\ell^1$-EQ+ES for $\Phi=\Phi_1,\Phi_2$,
for $J_{\rm es}=1,\ldots,15$.
}
 \label{fig:JQ_analysis}
\end{figure} 

Figure \ref{fig:compare_vis} shows the interpolation  points selected by EIM, and the quadrature points obtained by applying MIO-EQ with $\delta=10^{-4}$ and real-valued weights.
Interestingly, we observe that the qualitative pattern of the points selected by the two procedures is extremely similar.

\begin{figure}[h!]
\centering
\subfloat[$\Phi=\Phi_1$] {\includegraphics[width=0.48\textwidth]
 {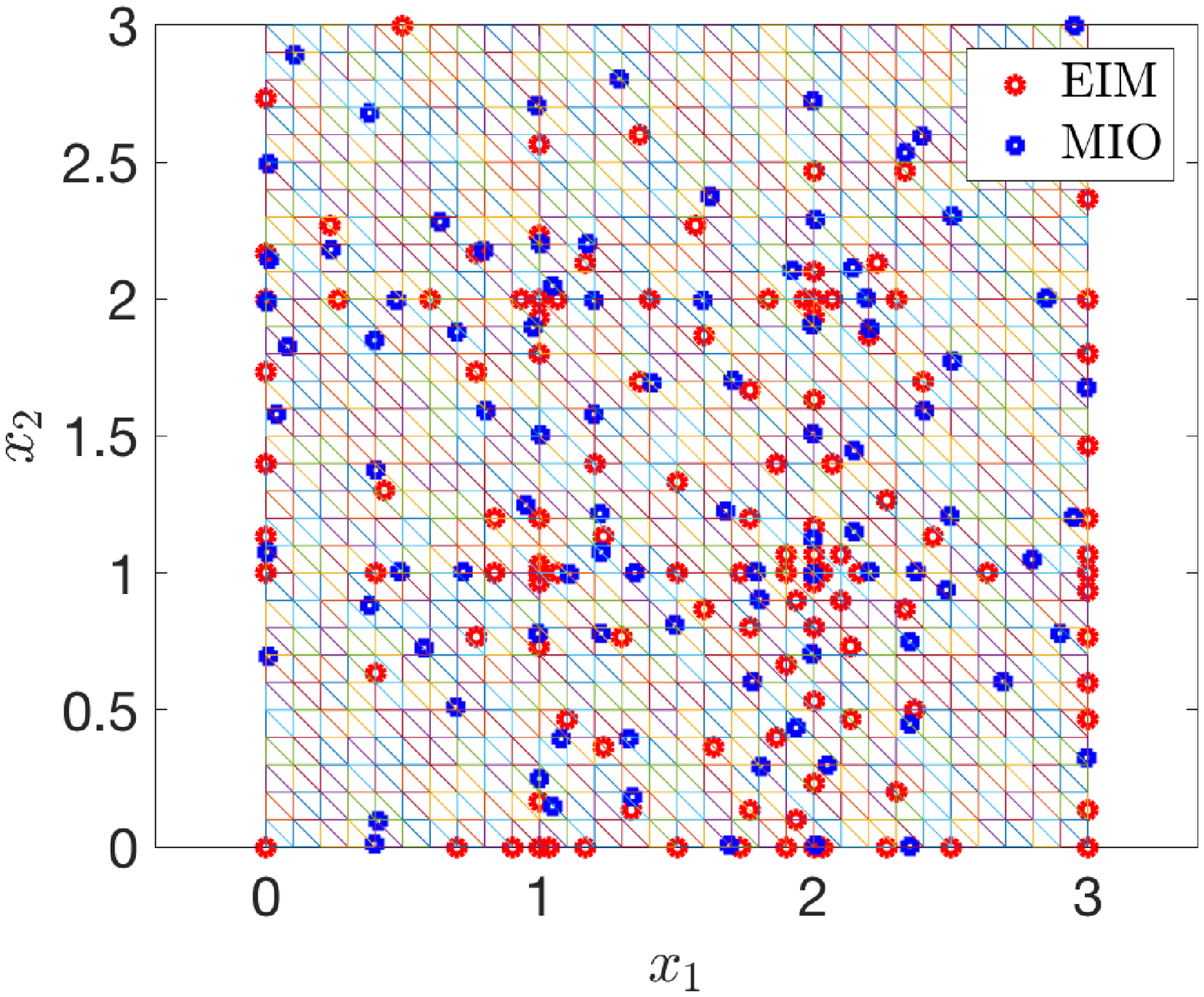}}
 ~
\subfloat[$\Phi=\Phi_2$] {\includegraphics[width=0.48\textwidth]
 {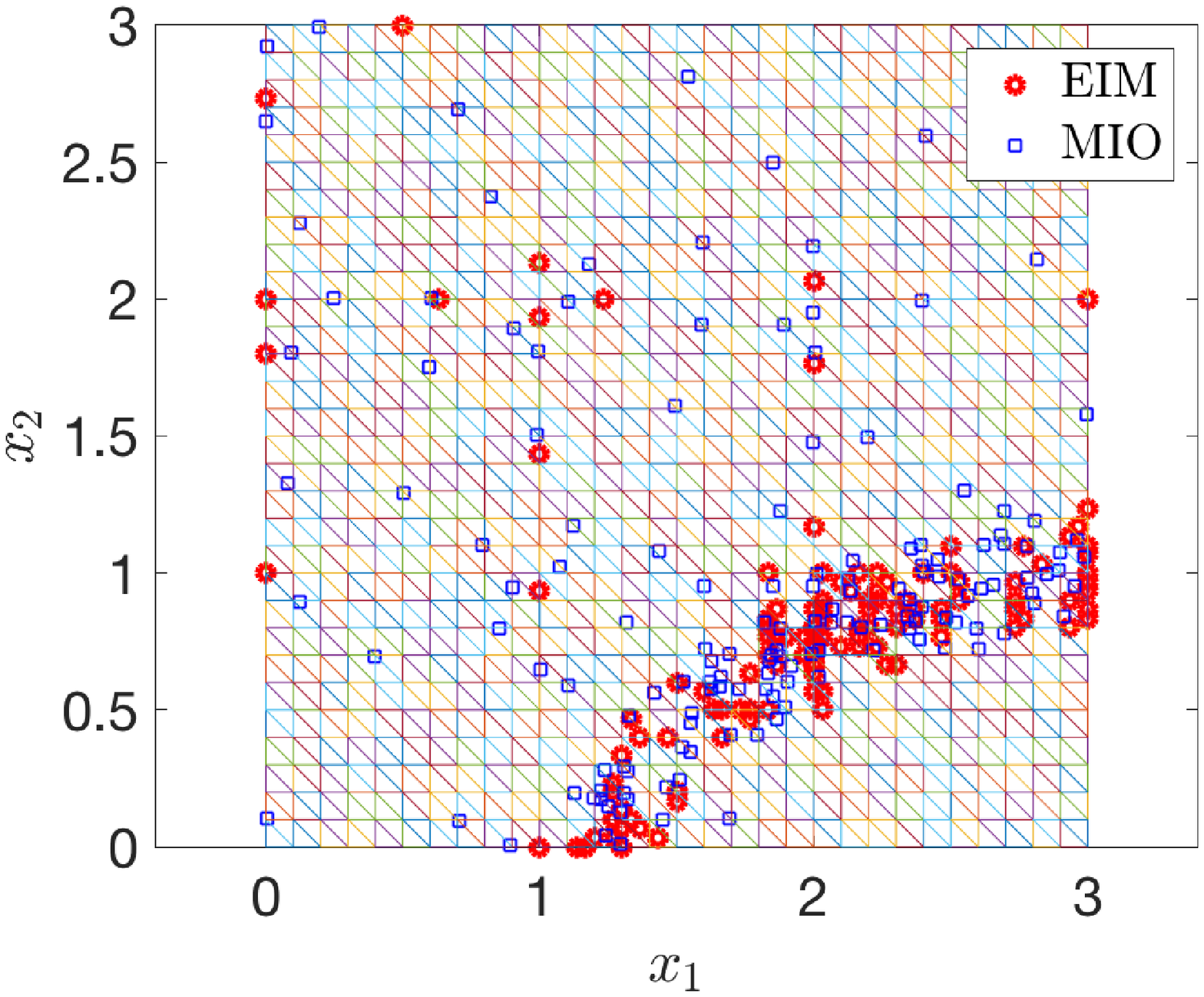}}
  
\caption{ 
EIM  interpolation points and MIO-EQ quadrature points.
}
 \label{fig:compare_vis}
\end{figure} 

\subsection{Application to residual calculations}
\label{sec:residual_navier_stokes}

\subsubsection{Problem statement}
\label{sec:NS_statement}

We apply the ATI+ES and  EQ+ES approaches  to the computation of the time-averaged residual indicator proposed in \cite{fick2017reduced} for the unsteady incompressible Navier-Stokes equations. 
We refer to \cite{fick2017reduced} for all the details concerning the  definition 
of the model problem
(a two-dimensional lid-driven cavity flow problem over a range of Reynolds numbers), and the Reduced Order Model (ROM) employed;
here, we only introduce quantities that are directly related to the residual indicator.
Given $\Omega = (-1,1)^2$ and  the time grid $\{ t^j = j \Delta t  \}_{j=0}^J$, 
we define the space $V_{\rm div} = \{ v \in [H_0^1(\Omega)]^2: \, \nabla \cdot v \equiv 0  \}$. Then,  for any sequence 
$\{  w^j \}_{j=0}^J \subset V_{\rm div}$ we define the time-averaged residual
\begin{subequations}
\label{eq:time_averaged_R}
\begin{equation}
\langle R \rangle
\left(
\{  w^j \}_{j=0}^J,
v;
{\rm Re}
\right)
=
\frac{\Delta t}{T-T_0}
\,
\sum_{j=J_0}^{J-1} \,
e( w^j, w^{j+1}, v; {\rm Re} ),
\end{equation}
where 
${\rm Re} \in \mathcal{P}=[15000,25000]$ denotes the Reynolds number, 
$T=t^J, T_0=t^{J_0}$ and
$e: V_{\rm div} \times V_{\rm div}\times V_{\rm div} \to \mathbb{R}$ is the residual associated with the discretized Navier-Stokes equations at time $t^j$ 
\begin{equation}
\begin{array}{rl}
e( w^j, w^{j+1}, v; {\rm Re} ) 
=
&
\int_{\Omega} \,
\left(
\frac{w^{j+1} - w^j}{\Delta t}
\, + \,
(w^j + R_g) \cdot \nabla (w^{j+1} + R_g)
\right) \cdot v
\\[3mm]
&
\displaystyle{
+
\frac{1}{\rm Re} \, \nabla \left( w^{j+1} + R_g \right) : \nabla v 
\,
dx.
}
\\
\end{array}
\end{equation}
Finally, $R_g \in [H^1(\Omega)]^2$, $\nabla \cdot R_g \equiv  0$ is a suitable lift associated with the Dirichlet boundary condition
$g \in [H^{1/2}(\partial \Omega)]^2$.
\end{subequations}

Our goal is to compute the dual norm of the residual 
$\langle R \rangle$,
\begin{equation}
\label{eq:dual_residual}
\Delta^{\rm u}
\left(
\{  w^j \}_{j=0}^J;
{\rm Re}
\right)
:=
\|   
\langle R \rangle
\left( \{  w^j \}_{j=0}^J,
\cdot;  {\rm Re} \right)
    \|_{V_{\rm div}'}
\end{equation}
for a given ROM 
${\rm Re} \in \mathcal{P} \mapsto \{  \hat{u}^j({\rm Re})    \}_{j}$
satisfying 
$\hat{u}^j(x; {\rm Re}) = \sum_{n=1}^N \, a_n^j({\rm Re}) $ $\zeta_n^{\rm rom}(x) $.
For this class of ROMs, we introduce the parameterized functional 
$\widetilde{  \langle R \rangle  }$ 
associated with $ \langle R 	\rangle$,
\begin{subequations}
\label{eq:tildeR}
\begin{equation}
\widetilde{  \langle R \rangle  } \, (v;  {\rm Re})
=
 \langle R 	\rangle 
 \left(
\{  \hat{u}^j({\rm Re})    \}_{j}, \, v;
{\rm Re}
 \right)
= 
\int_{\Omega} \,
\Upsilon(x; {\rm Re}) \, \cdot \, 
F(x; v) \, dx
\end{equation}
where
$F(\cdot; v) = [(\nabla v_1)_1, (\nabla v_1)_2,(\nabla v_2)_1, (\nabla v_2)_2, v_1,v_2]$ and
$\Upsilon = [\Psi_{1,1}, \ldots \Psi_{2,2}, \Phi_1,\Phi_2]$, with 
\begin{equation}
\begin{array}{rl}
\Psi(\cdot; {\rm Re} ) =  &
\displaystyle{
\frac{1}{\rm Re} \, \sum_{n=1}^N \, a_n^+ \, \nabla \zeta_n^{\rm rom} + \nabla R_g,
}
\\[3mm]
\Phi(\cdot; {\rm Re} ) =  &
\displaystyle{
\sum_{n=1}^N \, 
\left(
\frac{a_n^J - a_n^{J_0}}{T-T_0}
\right)
\,
\zeta_n^{\rm rom}
\, + \, 
a_n^+ \, \left( R_g \cdot \nabla \right) \zeta_n^{\rm rom} 
\, + \, 
a_n^- \, \left( \zeta_n^{\rm rom} \cdot \nabla \right) R_g 
 }
\\[3mm]
&
\displaystyle{
+ \,
\sum_{m,n=1}^N \, \bar{c}_{m,n}  \left( \zeta_n^{\rm rom} \cdot \nabla \right) \zeta_m^{\rm rom}
\, + \,
 \left( R_g  \cdot \nabla \right) R_g
},
\\
\end{array}
\end{equation}
and
\begin{equation}
a_n^+ = \frac{\Delta t}{T - T_0} \sum_{j=J_0+1}^J \, a_n^k,
\quad
a_n^- = \frac{\Delta t}{T - T_0} \sum_{j=J_0}^{J-1} \, a_n^j,
\quad
\bar{c}_{m,n}
=
\frac{\Delta t}{T - T_0} \sum_{j=J_0}^{J-1} \,  a_m^{j+1}  \,
\, a_n^j.
\end{equation}
Note that the functional $\widetilde{\langle R \rangle}$ is 
parametrically affine; however, the number of expansion's terms $M_{\rm R}$ is
equal to $N^2 + 3N + 2$, and is thus  extremely large for practical values of $N$.
\end{subequations}

The  functional $\widetilde{\langle R \rangle} $   \eqref{eq:tildeR} is of the form studied in this paper; for this reason, we can apply the 
techniques presented in sections \ref{sec:methodology} and
\ref{sec:theoretical_comparison_ITI_vs_EQ_ES}
 to estimate its dual norm. 
We consider $T=10^3$, $T_0= 500$, $\Delta t = 5 \cdot 10^{-3}$, 
and we consider the constrained Galerkin ROM proposed in \cite{fick2017reduced} anchored in ${\rm Re}=20000$,
for two values of the ROM dimension $N$.
The high-fidelity discretization is based on a P=8 spectral element discretization with $\mathcal{N}=25538$ degrees of freedom and 
$\mathcal{N}_{\rm hf}=36864$ quadrature points.

\subsubsection{Numerical results}
\label{sec:numerics_NS}

We consider EIM-based ATI(+ES) and $\ell^1$-EQ+ES to approximate the dual norm of $\widetilde{\langle R \rangle} $. To generate the empirical test space, we use $n_{\rm train}^{\rm es}=150$ uniformly-sampled Reynolds numbers ${\rm Re}^1,\ldots,{\rm Re}^{\rm n_{\rm train}}$ in $\mathcal{P}$. Then, to generate the EQ rule, we consider the tolerance $\delta=10^{-7}$, we impose the accuracy constraints for $n_{\rm train}^{\rm eq}=50$ parameters, and we use the divide-and-conquer strategy discussed in section \ref{sec:divide_conquer} with $N_{\rm part}=32$.
On the other hand, to generate the ATI approximation we employ the same training set used for the generation of the empirical test space. Numerical results are presented for $M=50$ and $M=100$,  and $J_{\rm es}=50$.
Note that for $M= J_{\rm es}$ we have an exact ATI approximation.
 To assess performance, 
we consider $n_{\rm test} = 11$ equispaced parameters.

Figure \ref{fig:NS_EJ} shows the behavior of 
$E_{J_{\rm es}}^{\infty, \rm rel}
=
\max_{\rm Re} \, \frac{\|  \Pi_{\mathcal{X}_{J_{\rm es}^{\perp}}} \xi_{\rm Re}      \|_{\mathcal{X}}}{\| \xi_{\rm Re} \|_{\mathcal{X}}}
$ over the training set and over the test set, for two values of $N$.
Note that for $J_{\rm es} \gtrsim 50$ the relative error is below $\mathcal{O}(10^{-1})$.

\begin{figure}[h!]
\centering
\subfloat[$N=60$] {\includegraphics[width=0.48\textwidth]
 {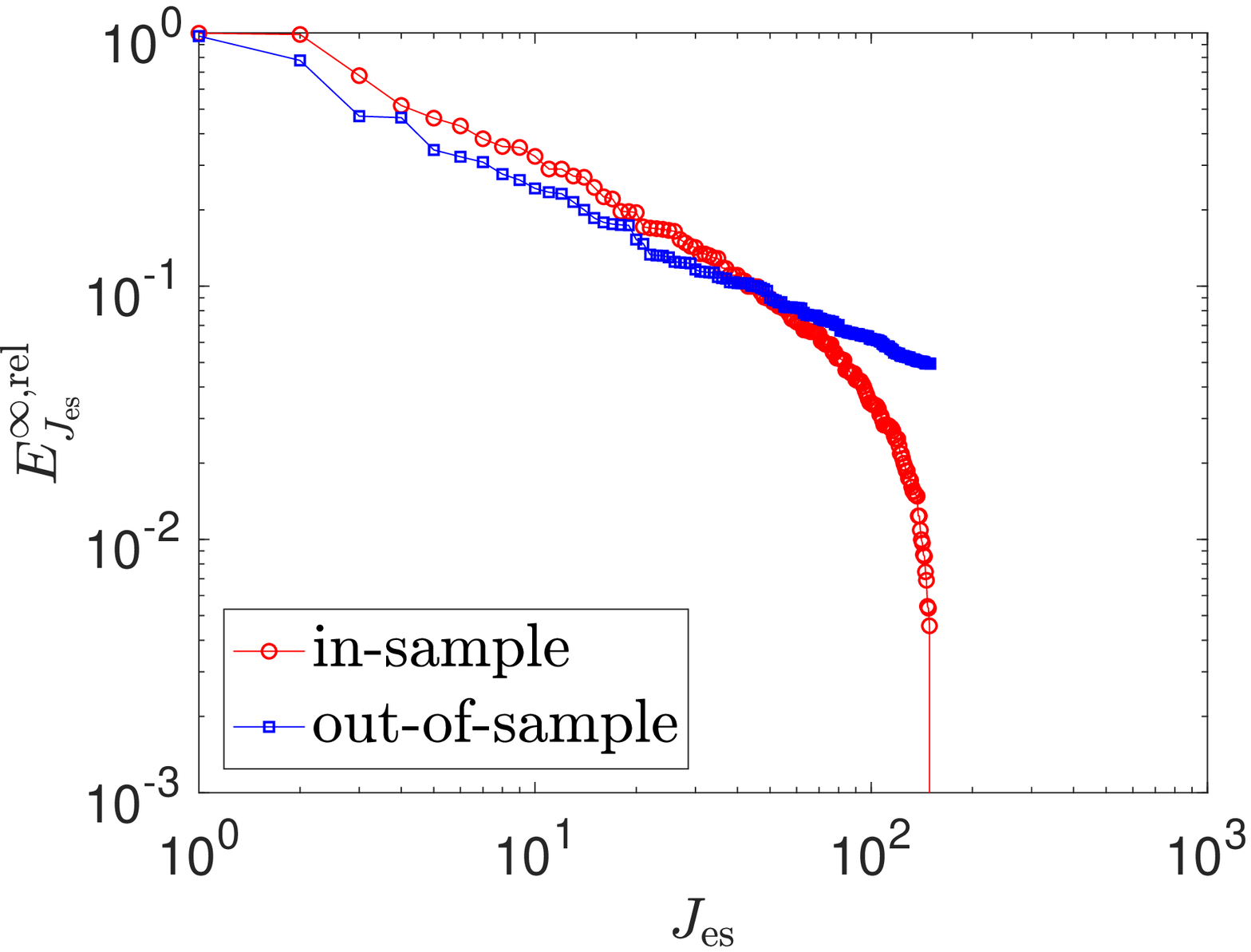}}
 ~
\subfloat[$N=80$] {\includegraphics[width=0.48\textwidth]
 {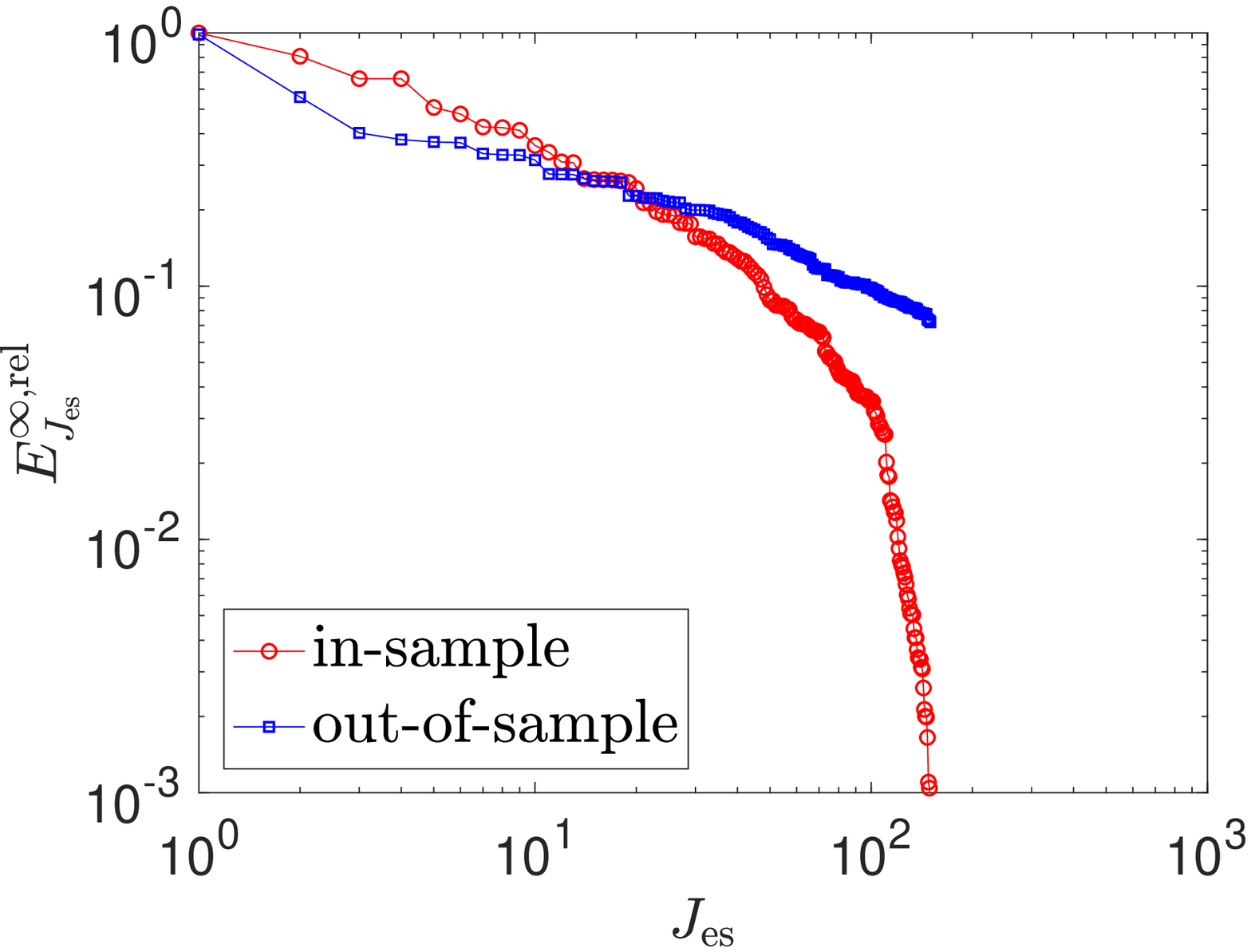}}
  
\caption{ 
behavior of  the maximum relative error $E_{J_{\rm es}}^{\infty, \rm rel}$ with respect to $J_{\rm es}$.
}
 \label{fig:NS_EJ}
\end{figure}

Figure \ref{fig:NS_pred} shows the behavior of the truth and estimated error indicator 
$\Delta^{\rm u}({\rm Re})$ over the test set, for two values of the ROM dimension $N$, $N=60,80$.
We observe that both ATI+ES and $\ell^1$-EQ+ES lead to similar performance in terms of accuracy.
$\ell^1$-EQ+ES returns a quadrature rule with 
$Q_{\rm eq}=717$ points for $N=60$ and
$Q_{\rm eq}=720$ points for $N=80$: $\ell^1$-EQ+ES  thus requires the offline computation of $n_{\rm train}^{\rm es}=150$ Riesz elements and the online storage cost of 
$\mathcal{C}_{\rm on} = D J_{\rm es} Q_{\rm eq} \approx 2.2 \cdot 10^5$ floating points\footnote{
Computational cost associated with the construction of the EQ rule is here negligible compared to the other offline costs.
}. On the other hand, ATI(+ES) requires the computation of $n_{\rm train}^{\rm es}=150$  Riesz elements and the online storage of $\mathcal{C}_{\rm on}= M J_{\rm es} D = 0.3 \cdot 10^5$ floating points: memory costs of ATI+ES for this test case are significantly lower than the costs of  $\ell^1$-EQ+ES.

\begin{figure}[h!]
\centering
\subfloat[$N=60$] {\includegraphics[width=0.48\textwidth]
 {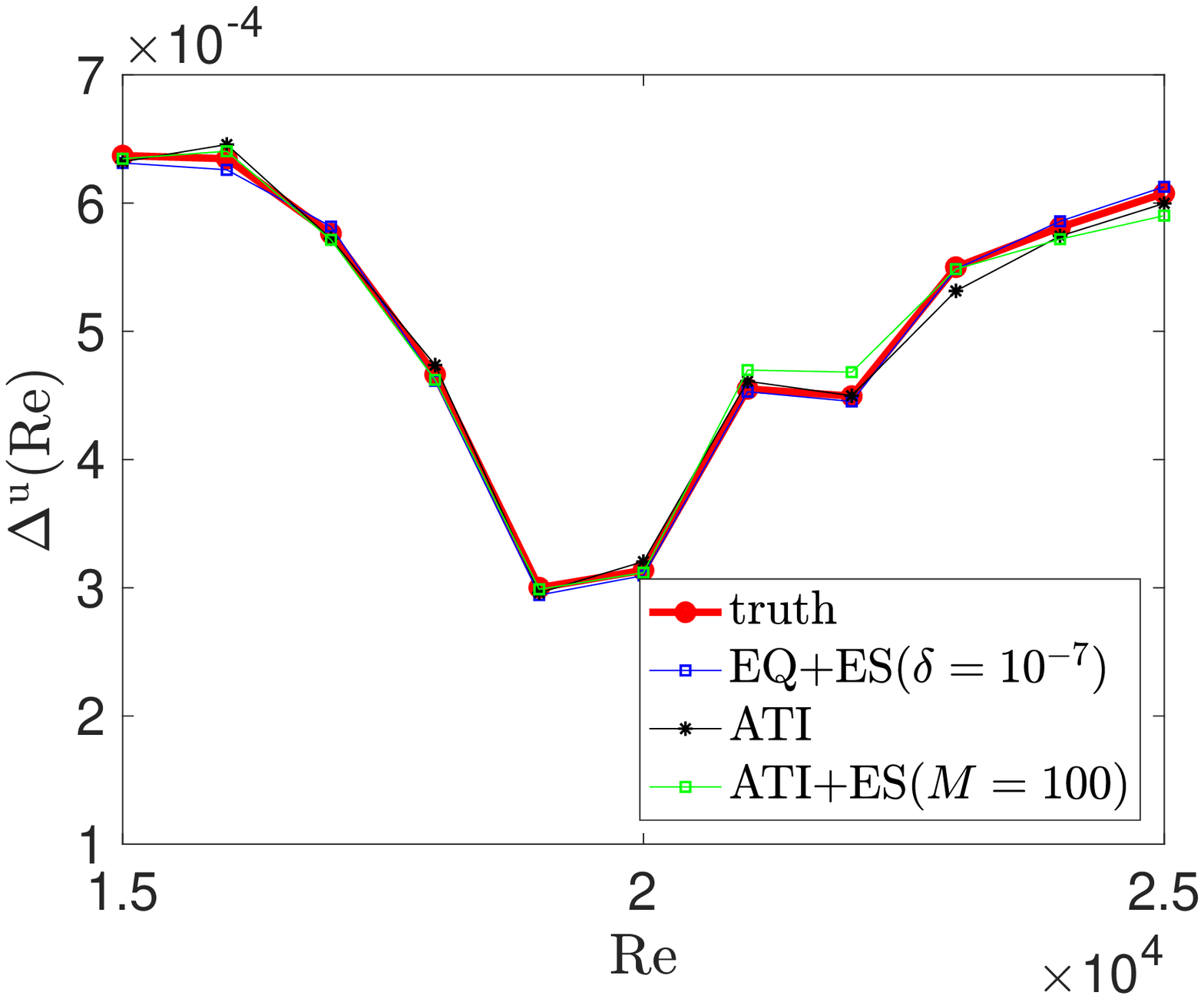}}
 ~
\subfloat[$N=80$] {\includegraphics[width=0.48\textwidth]
 {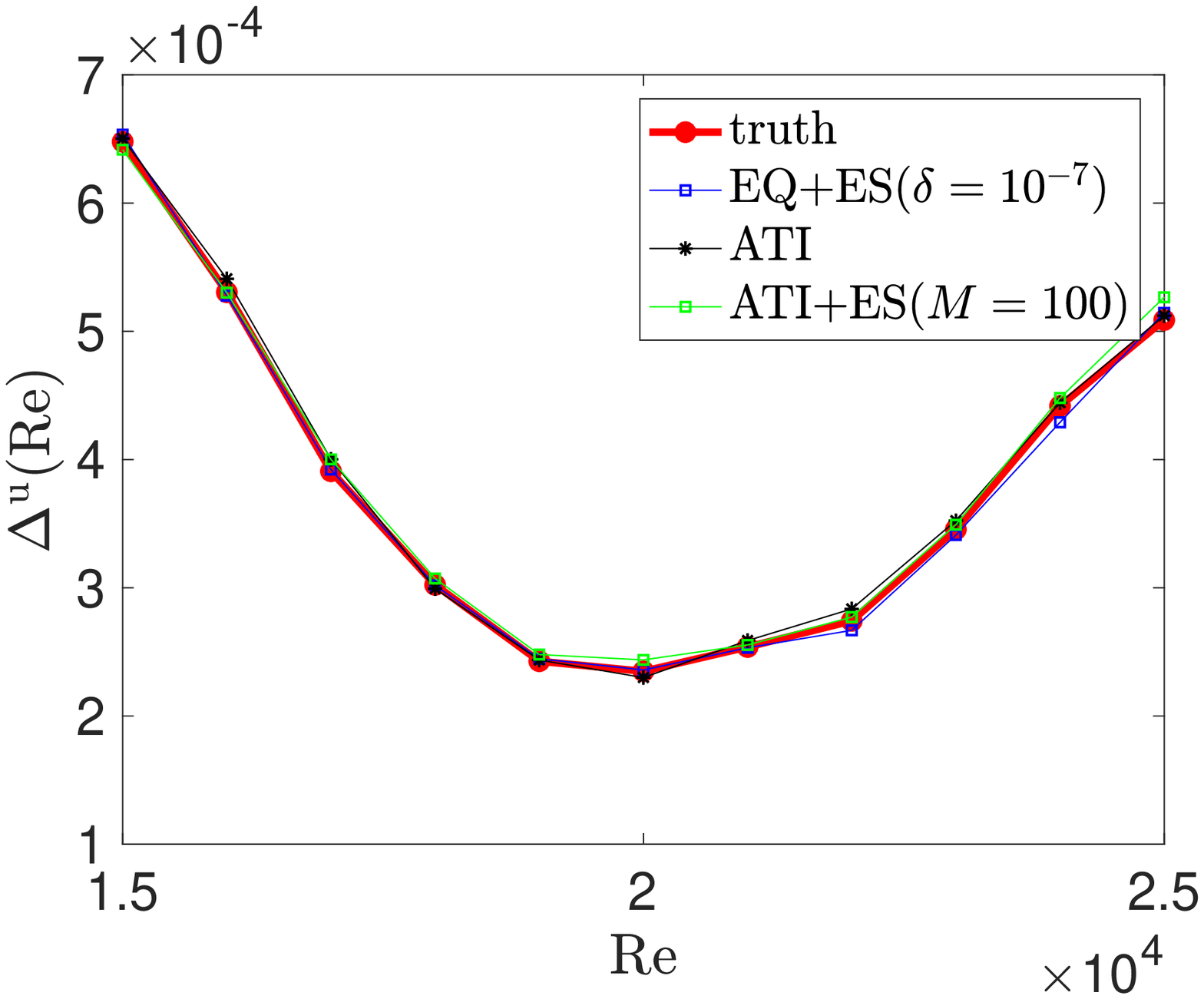}}
  
\caption{ 
behavior of   truth and estimated error indicator 
$\Delta^{\rm u}({\rm Re})$ for $n_{\rm test} = 11$ out-of-sample Reynolds numbers.
}
 \label{fig:NS_pred}
\end{figure} 

We conclude by commenting on the 
computational savings of the here-presented approaches compared to the approach employed in \cite{fick2017reduced}.
In \cite{fick2017reduced}, the authors exploit the fact that
$\widetilde{\langle R \rangle}$  \eqref{eq:tildeR} is parametrically affine to compute the truth dual norm.
For $N=80$, the procedure in \cite{fick2017reduced} requires 
the offline computation and the storage of $M_{\rm R}=6642$ Riesz elements and the online storage of $\mathcal{C}_{\rm on}=
M_{\rm R}^2  = 4.4 \cdot 10^7$ floating points.
Therefore, both $\ell^1$-EQ+ES and ATI+ES dramatically reduce offline and online memory costs.
As regards the online computational cost, computation of the residual indicator involves the computation of the  $\mathcal{O}(N^2)$ coefficients in \eqref{eq:tildeR}, which requires  $\mathcal{O}(N^2 (J-J_0) )$ flops.
Since $J-J_0 \approx 10^5 \gg N^2$, the online cost is dominated by the computation of the parameter-dependent coefficients in \eqref{eq:tildeR} for both approaches: as a result, the benefit of EQ+ES and ATI+ES in terms of online computational savings is extremely modest for the problem at hand.

\section{Conclusions}
\label{sec:conclusion}

 In this paper, we developed and analyzed an offline/online computational procedure for computing the dual norm of parameterized linear functionals. 
The key elements are an Empirical test Space (ES) built using POD, which reduces the dimensionality of the optimization problem associated with the computation of the dual norm, and an Empirical Quadrature  (EQ) procedure based on 
an $\ell^1$ relaxation or on MIO, which allows efficient calculations in an offline/online setting.

We presented theoretical  and numerical results to justify our approach. 
In particular, our results suggest that resorting to ES for proper choices of $J_{\rm es}$ might significantly reduce computational and memory costs without affecting accuracy. 
Furthermore, for the problem at hand, ATI was clearly superior for smooth integrands, while EQ strategies were able to achieve better accuracies for a non-differentiable integrand. 
Finally,  the performance of $\ell^1$-EQ+ES was inferior to that of an ATI+ES approach for 
 the estimation of the time-averaged residual indicator proposed in  \cite{fick2017reduced}.

We believe that several aspects of the proposed approach deserve further investigations.
First,  we wish to extend the analysis to quadrature rules with positive weights, and we wish to study the performance of the divide-and-conquer approach presented in section \ref{sec:empirical_quadrature}.
Second, we envision that our approach could also be employed to reduce memory and computational costs associated with minimum residual ROMs (\cite{carlberg2013gnat,yano2014space}) for nonlinear problems.
 
%
%
\appendix
 
\section{Notation}
\label{sec:notation}

\textbf{High-fidelity discretization} 
\medskip

\begin{tabular}{|l|l|}
\hline
$\mathcal{X} = {\rm span}\{ \varphi_i \}_{i=1}^{\mathcal{N}}$ &
ambient space defined over $\Omega \subset \mathbb{R}^d$ \\
\hline
$\mathcal{X}'$ & dual space \\
\hline
$R_{\mathcal{X}}: \mathcal{X}' \to \mathcal{X}$ & Riesz operator \\
\hline
$\Pi_{\mathcal{W}}: \mathcal{X} \to \mathcal{W}$ & orthogonal projector operator onto the linear space $\mathcal{W} \subset \mathcal{X}$ \\[1mm]
\hline
$\mathcal{Q}^{\rm hf}(v) = \sum_{i=1}^{\mathcal{N}_{\rm q}} \, \rho_i^{\rm hf} \, v(x_i^{\rm hf})$ &  high-fidelity quadrature rule \\[2mm]
\hline
$\mathbf{v} \in \mathbb{R}^{\mathcal{N}}$ 
&
vector of coefficients such that $v = \sum_i  v_i \, \varphi_i$
\\ \hline
$\boldsymbol{\mathcal{L}} \in \mathbb{R}^{\mathcal{N}}$
&
 vector  $\boldsymbol{\mathcal{L}}   =[
\mathcal{L}(\varphi_1),\ldots, \mathcal{L}(\varphi_{\mathcal{N}})]$
for any $\mathcal{L} \in \mathcal{X}'$
\\
\hline
$\mathbb{X}  \in \mathbb{R}^{\mathcal{N} \times \mathcal{N}}$
&
matrix such that  $\mathbb{X}_{i,j}   = (\varphi_j, \varphi_i)_{\mathcal{X}}$
\\
\hline
$C_{\rm riesz} = \mathcal{O}(\mathcal{N}^s)$ 
&
 cost to compute $R_{\mathcal{X}} \mathcal{L}$ for a given 
$\mathcal{L} \in \mathcal{X}'$
\\
\hline
\end{tabular}
\medskip

\noindent
\textbf{Parameterized functional} 
\medskip

\begin{tabular}{|l|l|}
\hline
$\mu \in \mathcal{P} \subset \mathbb{R}^P$
&
vector of parameters 
\\ [1mm]
\hline
$F(\cdot; v): \Omega   \to \mathbb{R}^D$ 
&
linear function of $v$ and its derivatives  \\ [1mm]
&
(e.g., $F(x; v) = [v(x), \nabla v(x)]$) \\[2mm]
\hline
$\Upsilon_{\mu}: \Omega   \to \mathbb{R}^D$
&
parameterized function \\ [2mm]
 \hline
$\mathcal{L}_{\mu}(v) = \mathcal{Q}^{\rm hf}( \eta(\cdot; v, \mu) )$
  & parameterized functional with 
  $ \eta(x; v, \mu) = \Upsilon_{\mu}(x) \cdot F(x; v)$
  \\ [1mm]
\hline
$L(\mu) = \|  \mathcal{L}_{\mu}  \|_{\mathcal{X}'}$
  &  dual norm
  \\ [1mm]
  \hline
$ \xi_{\mu} = R_{\mathcal{X}} \mathcal{L}_{\mu} $
  &  Riesz element of $\mathcal{L}_{\mu}$
  \\ [1mm]
\hline
$\mathcal{M}_{\mathcal{L}} = \{ 
\xi_{\mu}: \mu \in \mathcal{P} \}$
  &  dual norm
  \\ [1mm]
\hline
\end{tabular}
\medskip

\noindent
\textbf{EQ+ES  discretization} 
\medskip

\begin{tabular}{|l|l|}
\hline
$\mathcal{X}_{J_{\rm es}} = {\rm span}\{ \phi_j \}_{j=1}^{J_{\rm es}}$ &
empirical test space \\
\hline
$\mathcal{Q}^{\rm eq}(v) = \sum_{q=1}^{Q_{\rm eq}} \, \rho_q^{\rm eq} \, v(x_q^{\rm eq})$ &  empirical  quadrature rule \\[2mm]
\hline
$ L_{J_{\rm es}, Q_{\rm eq}}(\mu)
= \sup_{\phi \in \mathcal{X}_{J_{\rm es}}} \,
\frac{\mathcal{Q}^{\rm eq}(\eta(\cdot; \phi, \mu))}{\| \phi \|_{\mathcal{X}}}$
&
EQ+ES dual norm estimate \\ [2mm]
\hline
 $L_{J_{\rm es}}(\mu) = \| \mathcal{L}_{\mu} \|_{\mathcal{X}_{J_{\rm es}}}$
 &
ES dual norm estimate \\ [2mm]
\hline
$\Xi^{\rm train, es} = \{ \mu^{\ell}  \}_{\ell=1}^{n_{\rm train}^{\rm es}} \subset \mathcal{P}$
&
parameter training set for $\mathcal{X}_{J_{\rm es}} $ generation
\\  [2mm]
\hline
$\Xi^{\rm train, eq} = \{ \mu^{\ell}  \}_{\ell=1}^{n_{\rm train}^{\rm eq}} \subset \mathcal{P}$
&
parameter training set associated with \eqref{eq:accuracy_constraints}
\\  [2mm]
\hline
$K = n_{\rm train}^{\rm eq} J_{\rm eq} + 1$
&
number of rows in $\mathbb{G}$ in \eqref{eq:benchmark_optimization_no_positivity}
\\
\hline
\end{tabular}
\medskip

\noindent
\textbf{ATI discretization} 
\medskip

\begin{tabular}{|l|l|}
 \hline 
$\Upsilon_{M,\mu}(x)
= \sum_{m=1}^M \, 
\left( \boldsymbol{\Theta}_M(\mu) \right)_m \zeta_m(x)
$
 &
 $M$-term affine approximation of $\Upsilon_{\mu}$ \\[1mm]
 \hline 
$
\mathcal{L}_{M,\mu}(v) =
\mathcal{Q}^{\rm hf}
( \eta_M(\cdot; v, \mu)   )$
 &
 $M$-term affine approximation of  $\mathcal{L}_{\mu}$ \\[1mm]
 &
 $\eta_M(x; v, \mu)  := \Upsilon_{M,\mu}(x) \cdot F(x; v)$
  \\[1mm]
 \hline 
 $L_M(\mu) = \| \mathcal{L}_{M,\mu}  \|_{\mathcal{X}'}$
 &
 ATI dual norm estimate
 \\[1mm]
 \hline
  $L_{M,J_{\rm es}}(\mu) = \| \mathcal{L}_{M,\mu}  \|_{\mathcal{X}_{J_{\rm es}}'}$
 &
 ATI+ES  dual norm estimate
 \\[1mm]
 \hline
 \end{tabular}

\section{Empirical Interpolation Method}
\label{sec:EIM}

\subsection{Review of the interpolation procedure for scalar fields}

We review the Empirical Interpolation Method  (EIM),  and we discuss  its application to empirical quadrature and its extension to the approximation of vector-valued fields. Given 
the Hilbert space $\mathcal{Y}$ defined over $\Omega$, 
the $M$-dimensional linear space $\mathcal{Z}_M = {\rm span} \{ \psi_m \}_{m=1}^M \subset \mathcal{Y}$ and the points $\{  x_m^{\rm i} \}_{m=1}^M \subset \overline{\Omega}$, we define the interpolation operator  $\mathcal{I}_M: \mathcal{Y} \to \mathcal{Z}_M$ such that $\mathcal{I}_M(v)(x_m^{\rm i}) = v(x_m^{\rm i})$ for $m=1,\ldots,M$ for all $v \in \mathcal{Y}$. Given the manifold $\mathcal{F} \subset \mathcal{Y}$ and an integer $M>0$, the objective of EIM is to determine an approximation space  $\mathcal{Z}_M$ and $M$
points $\{  x_m^{\rm i} \}_{m=1}^M$ such that $\mathcal{I}_M(f)$ accurately approximates $f$ for all $f \in \mathcal{F}$.

Algorithm \ref{EIM} summarizes the EIM procedure as implemented in our code.
The algorithm takes as input snapshots of the manifold
$\{ f^k  \}_{k=1}^{n_{\rm train}} \subset \mathcal{F}$ and returns the functions $\{ \psi_m  \}_{m=1}^M$,  the interpolation points $\{  x_m^{\rm i} \}_{m=1}^M$ and the matrix $\mathbb{B} \in \mathbb{R}^{M \times M}$ such that
$\mathbb{B}_{m,m'} = \psi_m(x_{m'}^{\rm i})$.
It is possible to show that the matrix $\mathbb{B}$ is lower-triangular: for this reason, online computations  can be performed in  $\mathcal{O}(M^2)$ flops.
Note that in the original EIM paper the authors resort to a strong Greedy procedure to generate $\mathcal{Z}_M$, while here (as in several other works including \cite{chaturantabut2010nonlinear}) we resort to POD.
A thorough comparison between the two compression strategies is beyond the scope of the present work.

 \begin{algorithm}[H]                      
\caption{
Empirical Interpolation Method.}     
\label{EIM}                           


 \small
\begin{flushleft}
\begin{tabular}{| l | l |  }
\hline
\textbf{Inputs:} & $\{f^k \}_{k=1}^{n_{\rm train}}$, $M$ \\[2mm]
\hline
\textbf{Outputs:}
&
$\{ \psi_m \}_{m=1}^M, \mathbb{B} \in \mathbb{R}^{M  \times M}, \{ x_m^{\rm i}  \}_{m=1}^M$
\\[2mm]
  \hline
\end{tabular}
\end{flushleft}  
 \normalsize 

\begin{algorithmic}[1]
 \State
 Build the POD space $\zeta_1,\ldots,\zeta_M$ based on the snapshot set $\{f^k \}_{k=1}^{n_{\rm train}}$.
 \smallskip
  
 \State
 $x_1^{\rm i} := {\rm arg} \max_{x \in \overline{\Omega}} \, |\psi_1(x)|$,
 $\psi_1 :=   \frac{1}{\zeta_1(x_1^{\rm i})} \, \zeta_1$, $\left(\mathbb{B} \right)_{1,1}= 1$
  \smallskip
 
\For{$m=2,\ldots,M$}

\State
$r_m = \zeta_m - \mathcal{I}_{m-1} \zeta_m$
  \smallskip

\State
$x_m^{\rm i} := {\rm arg} \max_{x \in \overline{\Omega}} \, |r_m(x)|$,
$\psi_m = \frac{1}{r_m(x_m^{\rm i}) } \, r_m$,
$\left(\mathbb{B} \right)_{m,m'}= \psi_m(x_{m'}^{\rm i})$.
 
 \EndFor
   \smallskip

\end{algorithmic}

\end{algorithm}

\subsection{Application to empirical quadrature}
As shown in  \cite{antil2013two}, EIM naturally induces a specialized quadrature rule for elements of $\mathcal{F}$. To show this, we  consider the approximation:
$$
\mathcal{Q}^{\rm hf} \left( f \right)
\approx
\mathcal{Q}^{\rm hf} \left( \mathcal{I}_M(f) \right)
=
\sum_{m, m'=1}^M
\mathcal{Q}^{\rm hf} \left(  \psi_m
 \right)
 \mathbb{B}_{m,m'}^{-1} \, f(x_{m'}^{\rm i})
= 
\sum_{m'=1}^M
\rho_{m'}^{\rm eq} \, f(x_{m'}^{\rm i})
$$
where $\rho_{m'}^{\rm eq}  = \sum_{m=1}^M  \mathbb{B}_{m,m'}^{-1}    \mathcal{Q}^{\rm hf} \left(  \psi_m  \right)$. 
Note that in the first equality we used
$
 \mathcal{I}_M(f)(x) =
 \sum_{m, m'=1}^M
 \,   \mathbb{B}_{m,m'}^{-1}   \,
  \, f(x_{m'}^{\rm i})       \psi_m(x)
$.

\subsection{Extension to vector-valued fields}
The EIM procedure can be extended to vector-valued fields.
We present below the non-interpolatory extension of EIM employed in section \ref{sec:residual_navier_stokes}.
We refer to \cite{tonn2011reduced,negri2015efficient} for two alternatives applicable to vector-valued fields.
Given the space $\mathcal{Z}_M = {\rm span} \{ \zeta_m \}_{m=1}^M \subset \mathcal{Y}$ and 
the points $\{  x_m^{\rm i} \}_{m=1}^M \subset \overline{\Omega}$, 
we define the least-squares approximation operator $\mathcal{I}_M: \mathcal{Y} \to \mathcal{Z}_M$ such that for all $v \in \mathcal{Y}$
$$
\mathcal{I}_M(v) := {\rm arg} \min_{\zeta \in \mathcal{Z}_M} \, \sum_{m=1}^M \, \| v(x_m^{\rm i}) - \zeta(x_m^{\rm i})   \|_2^2.
$$
It is possible to show that $\mathcal{I}_M$ is well-defined if and only if the matrix $\mathbb{B} \in \mathbb{R}^{MD \times M}$,
\begin{subequations}
\label{eq:EIM_vec}
\begin{equation}
\mathbb{B} = 
\left[
\begin{array}{ccc}
\zeta_1(x_1^{\rm i}), & \ldots, & \zeta_M(x_1^{\rm i}) \\
& \vdots & \\
\zeta_1(x_M^{\rm i}), & \ldots, & \zeta_M(x_M^{\rm i}) \\
\end{array}
\right]
\end{equation}
is full-rank. In this case, we find that   $\mathcal{I}_M$ can be efficiently computed as
\begin{equation}
\mathcal{I}_M(v) = \sum_{m=1}^M \,
\left(  \boldsymbol{\alpha}(v) \right)_m \, \zeta_m,
\quad
\boldsymbol{\alpha}(v) = 
\mathbb{B}^{\dagger}
\left[
\begin{array}{c}
v(x_1^{\rm i}) \\
\vdots \\
v(x_M^{\rm i}) \\
 \end{array}
\right]
\end{equation}
for any $v \in \mathcal{Y}$, 
where $\mathbb{B}^{\dagger} = (\mathbb{B}^T \mathbb{B})^{-1} \mathbb{B}^T$ denotes the Moore-Penrose pseudo-inverse of $\mathbb{B}$. 
\end{subequations} 

Algorithm \ref{EIM_vec} summarizes the procedure employed to compute $\mathcal{Z}_M$, $\{ x_m^{\rm i}  \}_{m=1}^M$ and the matrix $\mathbb{B}$.
We observe that for scalar fields the procedure reduces to the one outlined in Algorithm \ref{EIM}. 
We further observe that online computational cost scales with $\mathcal{O}(D M^2)$, provided that 
$\mathbb{B}^{\dagger}$ is computed offline.

 \begin{algorithm}[H]                      
\caption{
Empirical Interpolation Method for vector-valued fields.}     
\label{EIM_vec}                           


 \small
\begin{flushleft}
\begin{tabular}{| l | l |  }
\hline
\textbf{Inputs:} & $\{f^k \}_{k=1}^{n_{\rm train}}$, $M$ \\[2mm]
\hline
\textbf{Outputs:}
&
$\{ \zeta_m \}_{m=1}^M, \mathbb{B}^{\dagger} \in \mathbb{R}^{M \times M}, \{ x_m^{\rm i}  \}_{m=1}^M$
\\[2mm]
  \hline
\end{tabular}
\end{flushleft}  
 \normalsize 

\begin{algorithmic}[1]
 \State
 Build the POD space $\zeta_1,\ldots,\zeta_M$ based on the snapshot set $\{f^k \}_{k=1}^{n_{\rm train}}$.
 \smallskip
  
 \State
 Set 
 $x_1^{\rm i} := {\rm arg} \max_{x \in \overline{\Omega}} \, \|\zeta_1(x) \|_2$,
and  $\mathbb{B}_{M=1}$ using \eqref{eq:EIM_vec}.
  \smallskip
 
\For{$m=2,\ldots,M$}

\State
$r_m = \zeta_m - \mathcal{I}_{m-1} \zeta_m$
  \smallskip

\State
Set 
$x_m^{\rm i} := {\rm arg} \max_{x \in \overline{\Omega}} \, \|r_m(x) \|_2$, and update
 $\mathbb{B}_{M=m}$ using \eqref{eq:EIM_vec}. 
 \EndFor
    \smallskip

\State
Compute $\mathbb{B}^{\dagger} = (\mathbb{B}^T \mathbb{B})^{-1} \mathbb{B}$.
\end{algorithmic}

\end{algorithm}

\section{Proof of Proposition \ref{th:relationship_MJQ}}
\label{sec:proof_tricky}
In view of the proof of Proposition \ref{th:relationship_MJQ}, we need the following Lemma.

\begin{Lemma}
\label{th:EIM_proof}
Let $\mathcal{W}_N = {\rm span}\{ w_n \}_{n=1}^N \subset C(\bar{\Omega})$ be a $N$-dimensional space. Then, there exist $x_1^o,\ldots,x_N^o \in \bar{\Omega}$ and
$\psi_1,\ldots,\psi_N$ such that $\psi_n(x_{n'}^o) = \delta_{n,n'}$ $n,n'=1,\ldots,N$, and
\begin{equation}
\label{eq:EIM_proof}
w(x) = \sum_{n=1}^N \, w(x_n^o) \, \psi_n(x) \quad
\forall \, x \in \Omega, \quad
\forall \, w \in \mathcal{W}_N.
\end{equation}

Similarly, given the matrix $\mathbb{G} \in \mathbb{R}^{K \times \mathcal{N}_{\rm q}}$ such that ${\rm rank}(\mathbb{G}) = N$, there exist
$\mathbb{A} \in \mathbb{R}^{K \times N}$, $\mathbb{B} \in \mathbb{R}^{N \times  \mathcal{N}_{\rm q}}$ and $\mathcal{I}_N \subset \{  1,\ldots, \mathcal{N}_{\rm q} \}$ such that
\begin{equation}
\label{eq:EIM_proof_algebraic}
\mathbb{G} = \mathbb{A} \mathbb{B},
\quad
\mathbb{B}(:  , \mathcal{I}_N) = \mathbb{I}_N.
\end{equation}
\end{Lemma}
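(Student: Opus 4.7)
Both parts of the Lemma express the same underlying fact in two different settings---continuous and purely algebraic---so I would give parallel arguments rather than invoke the greedy construction of Algorithm~\ref{EIM}.

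For part 1, the plan is to choose the interpolation nodes by a duality argument and then construct the $\psi_n$ through a single matrix inversion. The point evaluation functionals $\{\delta_x : x \in \bar{\Omega}\} \subset \mathcal{W}_N^*$ must span $\mathcal{W}_N^*$: otherwise some nonzero $w^\star \in \mathcal{W}_N$ would annihilate every $\delta_x$, meaning $w^\star \equiv 0$ on $\bar{\Omega}$, a contradiction with $\mathcal{W}_N \subset C(\bar{\Omega})$. Since $\dim \mathcal{W}_N^* = N$, I can extract $x_1^o,\dots,x_N^o \in \bar{\Omega}$ such that the matrix $M \in \mathbb{R}^{N \times N}$ with entries $M_{n,m} := w_n(x_m^o)$ is invertible. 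I then define
$$
\psi_n := \sum_{m=1}^N (M^{-1})_{n,m} \, w_m \in \mathcal{W}_N,
$$
so that $\psi_n(x_{n'}^o) = (M^{-1} M)_{n,n'} = \delta_{n,n'}$. The identity \eqref{eq:EIM_proof} is then verified by writing $w = \sum_k c_k w_k$, using $w(x_n^o) = \sum_k c_k M_{k,n}$, and collapsing the double sum $\sum_n w(x_n^o)\psi_n(x)$ via $M M^{-1} = I$ back to $\sum_k c_k w_k(x) = w(x)$.

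For part 2, the same selection principle applies on the discrete side. Since $\mathrm{rank}(\mathbb{G}) = N$, I pick an index set $\mathcal{I}_N \subset \{1,\dots,\mathcal{N}_{\rm q}\}$ with $|\mathcal{I}_N| = N$ such that $\mathbb{A} := \mathbb{G}(:,\mathcal{I}_N) \in \mathbb{R}^{K \times N}$ has linearly independent columns, and hence $\mathrm{col}(\mathbb{A}) = \mathrm{col}(\mathbb{G})$. For every column index $j$, $\mathbb{G}(:,j) \in \mathrm{col}(\mathbb{A})$, so there is a unique $\mathbb{B}(:,j) \in \mathbb{R}^N$ with $\mathbb{A}\,\mathbb{B}(:,j) = \mathbb{G}(:,j)$. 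Assembling these columns gives $\mathbb{G} = \mathbb{A}\mathbb{B}$. For $j$ equal to the $k$-th element of $\mathcal{I}_N$, the right-hand side is the $k$-th column of $\mathbb{A}$, forcing $\mathbb{B}(:,j) = e_k$; therefore $\mathbb{B}(:,\mathcal{I}_N) = \mathbb{I}_N$.

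The only delicate step is the first one of part 1, namely the selection of nodes such that $M$ is invertible. I prefer the duality argument above to an iterative EIM-style greedy construction because it avoids having to prove inductively that successive residuals remain nonzero, and because it makes the parallelism with the column-selection step of part 2 transparent. Beyond that, the verification reduces to routine linear algebra.
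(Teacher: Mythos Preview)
Your proof is correct, and it takes a genuinely different route from the paper's. The paper proves part~1 by induction on $N$, essentially re-running the EIM greedy construction: at step $N$ it forms the residual $\tilde\psi_N = w_N - \mathcal{I}_{N-1} w_N$, sets $x_N^o \in \arg\max_{x}|\tilde\psi_N(x)|$, and verifies that the updated interpolant reproduces all of $\mathcal{W}_N$. You instead argue non-constructively: the evaluation functionals $\{\delta_x\}_{x\in\bar\Omega}$ span $\mathcal{W}_N^*$ (else some nonzero $w^\star$ would vanish identically), extract $N$ of them giving an invertible evaluation matrix $M$, and set $\psi_n = \sum_m (M^{-1})_{n,m} w_m$ in one shot. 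Your approach is shorter, avoids the inductive bookkeeping of checking that each residual is nonzero, and makes the structural parallel with part~2 (selecting $N$ independent columns of a rank-$N$ matrix) completely transparent---which is appropriate since the Lemma is only used as an existence statement in the proof of Proposition~\ref{th:relationship_MJQ}. The paper's inductive proof, on the other hand, is constructive and mirrors Algorithm~\ref{EIM}, so it reinforces the link between the Lemma and the EIM procedure that motivates it; it also produces specific nodes (argmax locations) rather than merely asserting that suitable nodes exist.
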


\begin{proof}
Proofs of \eqref{eq:EIM_proof} and  \eqref{eq:EIM_proof_algebraic} 
are analogous;
for this reason, we prove  below  \eqref{eq:EIM_proof}, and we omit the proof of \eqref{eq:EIM_proof_algebraic} .

We proceed by induction.
For $N=1$, if we define $x_1^o = {\rm arg} \max_{x \in \bar{\Omega}} |w_1(x)|$ and
$\psi_1(\cdot) = \frac{1}{w_1(x_1^o)} w_1(\cdot)$, we find $w(x) = w(x_1^o) \psi_1(x)$ for all $x \in \Omega$ and $w \in \mathcal{W}_{N=1}$, which is \eqref{eq:EIM_proof}.

We now assume that the thesis holds for $N-1=N_0$, and we prove that it holds also for $N=N_0+1$.
With this in mind, we consider 
$w = \sum_{n=1}^N a_n w_n$ for some $a_1,\ldots,a_{N} \in \mathbb{R}$. We observe that
$$
w(x) - a_N w_N(x)
=
\sum_{n=1}^{N-1} \, a_n w_n(x) 
\quad
\forall \, x \in \Omega. 
$$
Then, exploiting the fact that the result holds for $N-1=N_0$,  we obtain
$$
w(x) = a_N w_N(x) + \sum_{n=1}^{N-1} \, \left(w(x_n^o) - a_N w_N(x_n^o)  \right) \tilde{\psi}_n(x)
=
a_N \tilde{\psi}_N(x) + \sum_{n=1}^{N-1} \, w(x_n^o)  \tilde{\psi}_n(x),
$$
where  $\{  x_n^o \}_{n=1}^{N-1} \subset \bar{\Omega}$,
$\tilde{\psi}_n(x_{n'}^o) = \delta_{n,n'}$ for $n,n'=1,\ldots,N-1$, and 
$\tilde{\psi}_N$ is defined as 
$$
\tilde{\psi}_N(x)
= w_N(x) - \sum_{n=1}^{N-1} \, w_N(x_n^o) \, \tilde{\psi}_n(x).
$$
If we define 
$$
x_N^o \in  {\rm arg} \max_{x \in \bar{\Omega}} \, | \tilde{\psi}_N(x)|,
\quad
\psi_N(x) := \frac{1}{\psi_N(x_N^o)} \, \psi_N(x),
$$
we find that $\psi_N(x_n^o) = \delta_{N,n}$ for $n=1,\ldots,N$, and
$$
\begin{array}{rl}
w(x) = & \displaystyle{ \left( w(x_N^o)  - \sum_{n=1}^{N-1} w(x_n^o) \, \tilde{\psi}_n(x_N^o) \right) \psi_N(x)
\,+\, \sum_{n=1}^{N-1} w(x_n^o) \, \tilde{\psi}_n(x) }
\\[3mm]
= &
\displaystyle{
w(x_N^o) \, \psi_N(x) \, + \,
\sum_{n=1}^{N-1} w(x_n^o) \left(
\tilde{\psi}_n(x) - \tilde{\psi}_n(x_N^o)   \psi_N(x)
  \right)
}
\\
\end{array}
$$
Thesis follows by defining $\psi_n(x) :=  \tilde{\psi}_n(x) - \tilde{\psi}_n(x_N^o)   \psi_N(x)$
and
observing that 
$\psi_n(x_{n'}^o) = \delta_{n,n'}$ for $n,n'=1,\ldots,N$. 
\qed
\end{proof}

\begin{proof}
(\emph{Proposition \ref{th:relationship_MJQ}})
We define $\eta_M(x; \phi, \mu) :=  \Upsilon_{M,\mu}(x) \cdot F(x; \phi)$ and the matrix $\mathbb{G}_M \in \mathbb{R}^{K \times  \mathcal{N}_{\rm q}}$ such that
$$
\mathbb{G}_M =
\left[
\begin{array}{ccc}
\eta_M (x_1^{\rm hf}; \phi_1, \mu^1 ), & \ldots &  \eta_M (x_{\mathcal{N}_{\rm q}}^{\rm hf}; \phi_1, \mu^1 ) \\ 
& \vdots & \\
\eta_M (x_1^{\rm hf}; \phi_{J_{\rm es}}, \mu^{n_{\rm train}^{\rm eq}} ), & \ldots &  \eta_M (x_{\mathcal{N}_{\rm q}}^{\rm hf}; \phi_{J_{\rm es}}, \mu^{n_{\rm train}^{\rm eq}} ) \\[2mm]
1 & \ldots & 1 \\
\end{array}
\right].
$$
By construction, for any choice of $\Xi^{\rm train,eq} = \{  \mu^{\ell} \}_{\ell=1}^{n_{\rm train}^{\rm eq}}$, we have
that 
$$
\{  \eta_M(x; \phi_j, \mu^{\ell})\}_{j,\ell} \cup \{  1 \}
\subset 
\mathcal{W}_N:=
{\rm span} \left\{
1, \,
\zeta_1 \cdot F(\cdot; \phi_1),\ldots,
\zeta_M \cdot F(\cdot; \phi_{J_{\rm es}})
\right\},
$$
with ${\rm dim}(\mathcal{W}_N) = N \leq M J_{\rm es} + 1$.
Recalling Lemma \ref{th:EIM_proof}, there exist 
$ \mathbb{A}_{N,M}^{\rm train} \in \mathbb{R}^{K \times  N}$,
$ \mathbb{B}_{N,M} \in \mathbb{R}^{N \times  \mathcal{N}_{\rm q}}$
and $\mathcal{I}_N \subset \{1,\ldots, \mathcal{N}_{\rm q}\}$
such that
$\mathbb{G}_M =  \mathbb{A}_{N,M}^{\rm train} \, \mathbb{B}_{N,M} $
and $\mathbb{B}_{N,M}(:, \mathcal{I}_N) = \mathbb{I}_N$.

We now introduce $\hat{\boldsymbol{\rho}} \in \mathbb{R}^{\mathcal{N}_{\rm q}}$ such that
$\hat{\boldsymbol{\rho}}_i = 0$ if $i \notin \mathcal{I}_N$ and
$\hat{\boldsymbol{\rho}}(\mathcal{I}_N) =: \hat{\boldsymbol{\rho}}_N = \mathbb{B}_{N,M} \boldsymbol{\rho}^{\rm hf}$. Then, we find
$$
\|  {\mathbb{G}} \hat{\boldsymbol{\rho}} - \mathbf{y}^{\rm hf}  \|_{\infty}
 =
 \|  {\mathbb{G}} ( \hat{\boldsymbol{\rho}} - \boldsymbol{\rho}^{\rm hf} )  \|_{\infty}
 \leq
\underbrace{ 
 \|  {\mathbb{G}}_M ( \hat{\boldsymbol{\rho}} - \boldsymbol{\rho}^{\rm hf} )  \|_{\infty}
}_{=\rm (I)} 
 +
 \underbrace{ 
 \| ( {\mathbb{G}} -  {\mathbb{G}}_M) \hat{\boldsymbol{\rho}} \|_{\infty}
}_{=\rm (II)} 
 +
 \underbrace{ 
 \| ( {\mathbb{G}} -  {\mathbb{G}}_M) {\boldsymbol{\rho}}^{\rm hf}  \|_{\infty}
}_{=\rm (III)} 
$$
By construction, (I)$=0$, while exploiting \eqref{eq:calLM_MJQ_a} we find
(III)$\leq \delta_{\rm ati}$. Finally, recalling the definition of $\epsilon_{\rm ati}$ in \eqref{eq:calLM_MJQ_b},  we obtain
$$
{\rm (II)}
\leq
\epsilon_{\rm ati} \|  \hat{\boldsymbol{\rho}}  \|_1
= \epsilon_{\rm ati}  \, \big\| \mathbb{B}_{N,M}  \, \boldsymbol{\rho}^{\rm hf} \big\|_1 
$$
If we set $C_{M,J_{\rm es}}=\big\| \mathbb{B}_{N,M}  \, \boldsymbol{\rho}^{\rm hf} \big\|_1$, 
we obtain that $\hat{\boldsymbol{\rho}}$ is admissible and has $N\leq M J_{\rm es} + 1$ non-zero entries. Thesis follows.
\qed
\end{proof}

\bibliographystyle{plain}
\footnotesize
\bibliography{all_refs}

\end{document}